\documentclass[british]{article}
\usepackage{ae,aecompl}
\usepackage[T1]{fontenc}
\usepackage[latin9]{inputenc}
\usepackage{geometry}
\geometry{verbose,lmargin=2.5cm,rmargin=2.5cm}
\setlength{\parskip}{\smallskipamount}
\setlength{\parindent}{0pt}
\usepackage{amsmath}
\usepackage{amsthm}
\usepackage{amssymb}

\makeatletter
\theoremstyle{plain}
\newtheorem{thm}{\protect\theoremname}[section]
\theoremstyle{definition}
\newtheorem{defn}[thm]{\protect\definitionname}
\theoremstyle{remark}
\newtheorem{rem}[thm]{\protect\remarkname}
\theoremstyle{plain}
\newtheorem{lem}[thm]{\protect\lemmaname}
\theoremstyle{plain}
\newtheorem{prop}[thm]{\protect\propositionname}
\theoremstyle{plain}
\newtheorem{cor}[thm]{\protect\corollaryname}

\@ifundefined{date}{}{\date{}}

\usepackage{bbm}

\usepackage{babel}
\providecommand{\definitionname}{Definition}
\providecommand{\lemmaname}{Lemma}
\providecommand{\propositionname}{Proposition}
\providecommand{\remarkname}{Remark}
\providecommand{\theoremname}{Theorem}

\makeatother

\usepackage{babel}
\providecommand{\corollaryname}{Corollary}
\providecommand{\definitionname}{Definition}
\providecommand{\lemmaname}{Lemma}
\providecommand{\propositionname}{Proposition}
\providecommand{\remarkname}{Remark}
\providecommand{\theoremname}{Theorem}

\begin{document}
\global\long\def\IN{\mathbb{N}}%
\global\long\def\II{\mathbbm{1}}%
\global\long\def\IZ{\mathbb{Z}}%
\global\long\def\IQ{\mathbb{Q}}%
\global\long\def\IR{\mathbb{R}}%
\global\long\def\IC{\mathbb{C}}%
\global\long\def\IP{\mathbb{P}}%
\global\long\def\IE{\mathbb{E}}%
\global\long\def\IV{\mathbb{V}}%

\title{Detection of an Arbitrary Number of Communities\\
in a Block Spin Ising Model}
\author{Miguel Ballesteros\thanks{IIMAS-UNAM, Mexico City, Mexico}, Rams\'es
H. Mena\footnotemark[1], Jos\'e Luis P\'erez\thanks{CIMAT, Guanajuato, Mexico},
and Gabor Toth\footnotemark[1] \footnote{gabor.toth@iimas.unam.mx}}
\maketitle
\begin{abstract}
We study the problem of community detection in a general version of
the block spin Ising model featuring $M$ groups, a model inspired
by the Curie-Weiss model of ferromagnetism in statistical mechanics.
We solve the general problem of identifying any number of groups with
any possible coupling constants. Up to now, the problem was only solved
for the specific situation with two groups of identical size and identical
interactions, see \cite{BRS2019,LoweSchu2020}. Our results can be
applied to the most realistic situations, in which there are many
groups of different sizes and different interactions. In addition,
we give an explicit algorithm that permits the reconstruction of the
structure of the model from a sample of observations based on the
comparison of empirical correlations of the spin variables, thus unveiling
easy applications of the model to real-world voting data and communities
in biology.
\end{abstract}
\textbf{MSC 2020}: 62H22, 82B20, 60F05

\textbf{Keywords}: Block models, Ising models, Community detection,
Exact recovery algorithm

\section{Introduction and Results}

In the influential article \cite{BRS2019}, Berthet, Rigollet, and
Srivastava introduced the block spin Ising model consisting of two
groups previously defined in the statistical mechanics literature
in \cite{CGh2007} to the field of community detection, and the authors
showed that for two groups of identical size and identical coupling
constants within each group the exact recovery of the community structure
was possible with high probability using a sample of observations
assumed to be generated by the model. The number of observations required
depends on the regime (or phase) of the model. Of the three regimes
of the model, the high and low temperature regimes where analysed
in \cite{BRS2019}. Subsequently, L\"owe and Schubert supplied the
missing critical regime in \cite{LoweSchu2020}, and demonstrated
that exact recovery was possible there, too. Thus, the problem of
community detection for more than two groups and/or different coupling
constants remained open. We solve the problem for a wide variety of
situations as far as the number of groups in the population, the sizes
of the groups, and interactions between voters within each group and
between different groups are concerned, and show that reconstructing
the group structure is possible given a certain number of observations.
The procedure we present is completely constructive and easy to implement.
Thus, our results in the present article allow for the application
of community detection algorithms to real-world data. Possible applications
are the identification of social classes, cultural groups, or sympathisers
of political parties, or other characteristics which may not be readily
available in data. Using the community detection algorithm will potentially
allow social scientists to better understand societal structures derived
from common preferences or interdependencies. Biologists can employ
the algorithms for the problem of distinguishing species based on
the observation of characteristics of different specimens. An important
point that should be noted is that the techniques presented in this
article can in principle also be applied to any other voting model
with several communities, for which the pair correlations between
votes can be deduced.

The literature on the detection of communities in stochastic models
is extensive. Among the models analysed, graphical models (i.e. Markov
random fields) are most prominent. See \cite{Lauritze1996} for a
description of how Markov random fields encode dependencies between
random variables. Among graphical models, the most studied are the
stochastic blockmodel and the block spin Ising model. The stochastic
blockmodel (introduced in \cite{HolBlaLe1983}) has been analysed
thoroughly. See \cite{Abbe2018} for an up to date exposition and
\cite{MosNeeSl2015,MosNeeSl2016,GaMaZhZh2017,AminLevi2018} for further
important results about this model. In the stochastic blockmodel,
we observe a single realisation of the random graph and assume that
interactions between individuals are independent. This is very different
from the block spin Ising model which is the subject of the present
article.

Beyond the mathematical interest inherent to these models and the
problem of community detection, there are also important applications
in the field of image recognition \cite{GemaGema1986,Besag1986},
biology and genetics \cite{LaurShee2003,ChenYuan2006,BallGarr2022},
recommendation systems \cite{LinSmiYo2003}, and natural language
processing \cite{MannSchu1999}, among others. These applications
are especially important given the ubiquitous use of large amounts
of data. Other applications include the use of these models applied
to problems in social sciences, such as sociological behaviour \cite{GoZhFiAi2010},
migration \cite{CGh2007}, economic decision making \cite{BD2001,OEA2018,LSV2020},
and political science \cite{KT2021c}. Community detection algorithms
applied to these problems allow researchers to understand the socioeconomic
and political structure of societies and dependencies between different
groups and classes, as well as the structures underlying species diversity
and interdependence.

Compared to the stochastic blockmodel, there has been considerably
less work about the community detection problem in Block spin Ising
models, also referred to as multi-species mean-field models \cite{FedeCont2011}
or multi-group Curie-Weiss models \cite{KirsToth2020,KirsToth2020b,KirsToth2022b}.
In these models, there is a heterogeneous population of $N\in\IN$
individuals subdivided into $M\in\IN$ groups and each individual
casts a binary vote in an election. We can also interpret the binary
choice as a biological characteristic of the individual which can
be observed and measured. The votes or measurements are random variables
which are allowed to depend on each other (with different degrees
of dependencies within each group and across group boundaries). These
dependencies are regulated by the coupling constants between votes.
We will formally present the model in Section \ref{subsec:model}.
An observer has access to the votes of all individuals in a number
of elections, referenda, or measurements. The population is assumed
static, and the observer sees the patterns in the voting behaviour.
These patterns manifest in the way certain subsets of voters frequently
tend to vote alike or contrary to each other. The observer's problem
consists of reconstructing the group structure of the population from
the observed votes or measurements alone.

Rather than taking a single realisation of a random graph as in the
stochastic blockmodel, in the block spin Ising model we observe a
sample of $n$ realisations of the model, i.e. $n$ voting configurations
of the shape $\left(x_{1},\ldots,x_{N}\right)\in\left\{ -1,1\right\} ^{N}$
which are realisations of the random vector $\left(X_{1},\ldots,X_{N}\right)$
assumed to be independent and distributed according to the probability
measure of the block spin Ising model defined in Definition \ref{def:CWM}.
Each realisation of $\left(X_{1},\ldots,X_{N}\right)$ contains the
votes or measurements of the entire population in a single election,
referendum, or observation. The problem we study consists of reconstructing
the structure of the model in terms of assigning each of the random
variables $X_{i}$, $i\in\IN_{N}$, that represents the vote of one
individual to one of the $M$ groups. Having access to a certain number
of observations permits us to recover the structure with high probability
for the possible structures of practical importance the model can
assume.

\subsection{\label{subsec:model}The Block Spin Ising Model}

The block spin Ising model is defined for $N\in\IN$ $\left\{ -1,1\right\} $-valued
random variables $X_{i}$ indexed by $i\in\IN_{N}$, commonly referred
to as spins. Since the application to voting and the observation of
biological characteristics of a heterogeneous population is the main
motivation of the study in this article, we will instead speak of
votes and voting configurations. The voters are sorted into $M\in\IN$
groups. Each group $l\in\IN_{M}$ is of size $N_{l}\in\IN$, such
that $\sum_{l=1}^{M}N_{l}=N$. The group identification function 
\begin{align}
\iota:\IN_{N}\to\IN_{M}\label{eq:iota_fn}
\end{align}
assigns each voter $i\in\IN_{N}$ to their respective group $\iota(i)\in\IN_{M}$.
Hence, the definition of the group sizes above implies $\left|\iota^{-1}\left(\left\{ l\right\} \right)\right|=N_{l}$,
where for any countable set $A$ the cardinality of $A$ is denoted
by $\left|A\right|$.
\begin{defn}
\label{def:group_sizes}We define the group size parameters for each
group $l$ 
\[
\alpha_{l}:=\frac{N_{l}}{N}.
\]
\end{defn}

\begin{rem}
\label{rem:alpha}We will assume these constants $\alpha_{l}$, $l\in\IN_{M}$,
exist and are positive. This implies in particular, that we can assume
each group consists of at least two different individuals for large
enough $N$, a fact we will use in the proofs of our results.
\end{rem}

For any voting configuration $\left(x_{1},\ldots,x_{N}\right)\in\left\{ -1,1\right\} ^{N}$,
the Hamiltonian $\mathbb{H}:\left\{ -1,1\right\} ^{N}\rightarrow\IR$
is given by 
\begin{equation}
\mathbb{H}\left(x_{1},\ldots,x_{N}\right):=-\frac{1}{2}\sum_{l,m=1}^{M}\frac{J_{l,m}}{\sqrt{N_{l}N_{m}}}\sum_{i\in\iota^{-1}\left(\left\{ l\right\} \right)}\sum_{j\in\iota^{-1}\left(\left\{ m\right\} \right)}x_{i}x_{j}.\label{eq:Hamiltonian}
\end{equation}
This Hamiltonian allows for different interactions between pairs of
votes, such that if one belongs to group $l$ and the other to group
$m$, they interact by a coupling constant $J_{l,m}\in\IR$.\footnote{These coupling constants subsume the inverse temperature parameter
$\beta$ found in the single-group Curie-Weiss model (see e.g. \cite[ Chapters IV and V]{Ell1985}
for a thorough discussion of the classical Curie-Weiss model). In
fact, in the special case of $M=1$, the definition of $\mathbb{H}$
reduces to the Hamiltonian of the Curie-Weiss model.} We note that, depending on the signs of the coupling parameters $J_{l,m}$,
the value of $\mathbb{H}$ varies with the voting configuration. In
the present context of voting, we interpret $\mathbb{H}\left(x_{1},\ldots,x_{N}\right)$
as a measure of conflict in society surrounding a particular issue
which gives rise to votes $\left(x_{1},\ldots,x_{N}\right)\in\left\{ -1,1\right\} ^{N}$.
If all coupling parameters are positive, there are two configurations
that have the lowest possible level of conflict: the unanimous configurations
$\ensuremath{(-1,\ldots,-1)}$ and $(1,\ldots,1)$. All other configurations
receive higher values $\mathbb{H}\left(x_{1},\ldots,x_{N}\right)$.
The highest levels are achieved when the votes are evenly split in
each group (or closest to it in case of odd group sizes). We define
the coupling matrix $J$ to have entries equal to the constants above:
\begin{equation}
J:=\left(J_{l,m}\right)_{l,m\in\IN_{M}}\in\IR^{M\times M}.\label{eq:J}
\end{equation}

\begin{defn}
Let $A>0$ stand for the statement that $A\in\IR^{M\times M}$ is
a symmetric positive definite matrix, i.e. the Euclidean inner product
$\left\langle Ax,x\right\rangle $ is positive for all $x\in\IR^{M},x\neq0$,
and let $A\geq0$ mean $A$ is positive semi-definite, i.e. $\left\langle Ax,x\right\rangle \geq0$
holds for all $x\in\IR^{M}$.
\end{defn}

\begin{rem}
We will assume $J>0$ throughout this article. This assumption can
be interpreted as the existence of stronger coupling between voters
belonging to the same group versus voter coupling between groups.
This assumption also allows for negative coupling between different
groups, which models an antagonistic relationship between groups.
\end{rem}

\begin{defn}
\label{def:CWM}Let $J\in\IR^{M\times M}$ with $J>0$ and $\mathbb{H}$
as defined in \eqref{eq:Hamiltonian}. The block spin Ising model's
probability measure $\mathbb{P}$, which gives the probability of
each of the $2^{N}$ voting configurations, is defined by 
\begin{align}
\mathbb{P}\left(X_{1}=x_{1},\ldots,X_{N}=x_{N}\right) & :=Z^{-1}e^{-\mathbb{H}\left(x_{1},\ldots,x_{N}\right)}\label{eq:CWM}
\end{align}
for all $\left(x_{1},\ldots,x_{N}\right)\in\left\{ -1,1\right\} ^{N}$,
where $Z$ is a normalisation constant which depends on $N$ and $J$.
\end{defn}

\subsection{Results}

The model has three distinct regimes, in each of which the model behaves
in a distinct fashion. This is reflected in the limiting distribution
of the vector of suitably normalised sums of votes $\left(S_{1},\ldots,S_{M}\right)$,
where $S_{l}$ is the sum of the votes in group $l\in\IN_{M}$. This
limiting distribution is distinct in each of the three regimes. These
results can be found in several articles, e.g. \cite{FedeCont2011,KnLoScSi2020,KirsToth2022b}.
The three regimes are called (in adaptation of the corresponding terms
for the classical single-group Curie-Weiss model) the high temperature,
the critical, and the low temperature regime. The high temperature
regime is characterised by the difference between the identity matrix
$I\in\IR^{M\times M}$ and the coupling matrix $J$ being a positive
definite matrix, i.e. $I-J>0$. The model is in the critical regime
when $I-J\geq0$ but $I-J\ngtr0$. Finally, the low temperature regime
is equivalent to $I-J\ngeq0$. High temperature corresponds to high
disorder, meaning the voters tend to have a mind of their own with
weak dependence of the votes. Low temperature corresponds to strong
couplings between votes.

We will study the problem of detecting the $M$ communities in the
model in the high temperature and the low temperature regimes. In
the latter case, we will make an assumption (see Definition \ref{def:non-crit})
about the Hessian matrices at the minima of the function $F:\IR^{M}\rightarrow\IR$
defined by
\begin{equation}
F(x)=\frac{1}{2}x^{T}J^{-1}x-\sum_{l=1}^{M}\alpha_{l}\ln\cosh\left(\frac{x_{l}}{\sqrt{\alpha_{l}}}\right),\quad x\in\IR^{M},\label{eq:F}
\end{equation}
which plays a crucial role in the analysis of the block spin Ising
model. It appears in the de Finetti representation of the probability
measure $\IP$ (see \cite[Theorem 32]{KirsToth2022b} for more details).
The minima of $F$ are of particular importance, and their location
depends on the regime the model is in.
\begin{defn}
We will use the symbol $C$ for positive constants which are independent
of $N$ but may depend on the coupling matrix $J$ and the group size
parameters $\alpha_{l}$. We make no claim as to the precise value
of these constants, and in fact they may change from one line of a
calculation to the next.
\end{defn}

\begin{thm}
\label{thm:high_temp}In the high temperature regime, i.e. for $I-J>0$,
set $H:=J^{-1}-I$. Then there is a positive constant $\boldsymbol{C}_{\textup{high}}$
such that for all $N\in\IN$
\[
\left|\mathbb{E}\left(X_{i}X_{j}\right)-\frac{H_{\iota(i),\iota(j)}^{-1}}{N}\right|\leq\boldsymbol{C}_{\textup{high}}\frac{\left(\ln N\right)^{(6+M)/2}}{N^{2}}\quad i,j\in\IN_{N},i\neq j,
\]
holds, where we use the notation $H_{\iota(i),\iota(j)}^{-1}=\left(H^{-1}\right)_{\iota(i),\iota(j)}.$
\end{thm}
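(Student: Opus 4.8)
The plan is to exploit the de Finetti (Hubbard--Stratonovich) representation of $\mathbb{P}$ alluded to after \eqref{eq:F}. Since $J>0$, the Gaussian identity $e^{\frac12 v^{T}Jv}=c\int_{\IR^{M}}e^{-\frac12 t^{T}J^{-1}t+t^{T}v}\,dt$, applied with $v_{l}=\sum_{i\in\iota^{-1}(\{l\})}x_{i}/\sqrt{N_{l}}$, linearises $-\mathbb{H}$ and shows that, conditionally on an auxiliary field $Y\in\IR^{M}$, the spins $X_{i}$ are independent with $\mathbb{E}(X_{i}\mid Y)=\tanh\!\big(Y_{\iota(i)}/\sqrt{N_{\iota(i)}}\big)$, while $Y$ carries the Lebesgue density proportional to $\exp\!\big(-\tfrac12 Y^{T}J^{-1}Y+\sum_{l}N_{l}\ln\cosh(Y_{l}/\sqrt{N_{l}})\big)$; the substitution $Y=\sqrt{N}\,W$ turns this into a density proportional to $e^{-NF(W)}$ with $F$ as in \eqref{eq:F}. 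Conditional independence then yields, for $i\neq j$, the exact identity $\mathbb{E}(X_{i}X_{j})=\mathbb{E}\big[\tanh(Y_{\iota(i)}/\sqrt{N_{\iota(i)}})\,\tanh(Y_{\iota(j)}/\sqrt{N_{\iota(j)}})\big]$, which reduces the theorem to a Laplace-type analysis of this ratio of integrals.

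First I would pin down the geometry of $F$ in the high temperature regime. A direct computation gives $\nabla F(0)=0$ and $\mathrm{Hess}\,F(0)=J^{-1}-I=H$, so the hypothesis $I-J>0$ is exactly $H>0$, making $0$ a non-degenerate critical point. The elementary bound $\ln\cosh u\le u^{2}/2$ shows that the field density is globally dominated by a centred Gaussian of covariance $H^{-1}$ (in the $Y$ variable); this simultaneously identifies $0$ as the unique global minimiser and bounds the mass of $Y$ outside a ball of radius $R_{N}=C\sqrt{\ln N}$ by $N^{-c}$ for $C$ large. Expanding $\tanh u=u+O(u^{3})$ and the exponent to quadratic order isolates the leading contribution: the linear parts of the two hyperbolic tangents produce $\mathbb{E}[Y_{\iota(i)}Y_{\iota(j)}]/\sqrt{N_{\iota(i)}N_{\iota(j)}}$, whose evaluation against the dominating Gaussian (together with $N_{l}=\alpha_{l}N$) supplies the $O(1/N)$ main term $H^{-1}_{\iota(i),\iota(j)}/N$ appearing in the statement.

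The bulk of the work is the quantitative, non-asymptotic error bound. On the truncated ball I would write $\tanh u=u-\tfrac13 u^{3}+r(u)$ and split the exponent into its quadratic part plus the quartic-and-higher remainder of $\sum_{l}N_{l}\ln\cosh(Y_{l}/\sqrt{N_{l}})$, which is of size $O(|Y|^{4}/N)$ there. Substituting these expansions and computing the resulting moments against the dominating Gaussian, every correction term---the cubic term of one tangent paired with the linear term of the other, and the quartic perturbation of the measure acting on the quadratic main term---is seen to be $O(1/N^{2})$; the normalising integral in the denominator is handled by the same expansion and contributes only higher-order corrections after forming the ratio. Collecting the truncated Gaussian moments up to sixth order (the source of a factor $(\ln N)^{3}$) together with the $M$-dimensional normalising integrals (a factor $(\ln N)^{M/2}$) produces the stated rate $(\ln N)^{(6+M)/2}/N^{2}$, with all constants depending only on $J$ and the $\alpha_{l}$.

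The main obstacle is precisely this last step: obtaining the estimate uniformly in $N$ with explicit control of both the logarithmic powers and the dimension $M$, rather than a mere $o(1/N)$ asymptotic statement. This demands care in (a) the truncation, where $R_{N}\sim\sqrt{\ln N}$ must be chosen so the Gaussian tail beats every negative power of $N$ while the in-ball moments remain polylogarithmic, and (b) bounding the ratio of two perturbed $M$-dimensional Gaussian integrals without losing track of the $M$-dependence. The global domination $\ln\cosh u\le u^{2}/2$ is the tool that renders all tail and moment bounds uniform, and I expect it to be the workhorse throughout.
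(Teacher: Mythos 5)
Your proposal follows essentially the same route as the paper's own proof: the Hubbard--Stratonovich/de Finetti representation you describe is exactly Theorem \ref{thm:de_Finetti_rep} in the appendix, and your global Gaussian domination via $\ln\cosh u\leq u^{2}/2$ is equivalent to the bound $F(x)\geq\frac{1}{2}\langle Hx,x\rangle$ of Lemma \ref{lem1}, merely phrased in the unscaled field variable $Y=\sqrt{N}\,x$. The remaining steps also coincide with the paper's: truncation at a polylogarithmic radius (your $R_{N}\sim\sqrt{\ln N}$ is the paper's $r_{N}\sim\sqrt{\ln N/N}$ after rescaling), the quartic Taylor remainder for the exponent, the cubic remainder for $\tanh$, the inequality $|e^{-s}-e^{-t}|\leq|s-t|$ implicit in your moment estimates, and the same $(\ln N)^{(6+M)/2}/N^{2}$ bookkeeping for the ratio $Z_{2}(N)/Z_{0}(N)$.
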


This theorem is proved in Section \ref{sec:Proof-thm_bounds}.

As in the high temperature regime, in the low temperature regime,
i.e. $I-J\ngeq0$, we address the non-critical case. Our definition
of the low temperature non-critical case is similar to the corresponding
definition for high temperature.
\begin{defn}[Low temperature non-critical case]
\label{def:non-crit}In the case that $I-J\ngeq0$, we say that the
model is in the low temperature regime. Moreover, if the Hessian of
$F$ at every point where the minimum is attained is positive definite,
we say that the model is non-critical.
\end{defn}

\begin{lem}
\label{lem:minima}In the low temperature non-critical case, the number
of points where the minimum of $F$ is attained is finite. We denote
them by 
\begin{align}
\left\{ z^{(k)}\right\} _{k\in\IN_{K}},\label{eq:minima_F}
\end{align}
and the corresponding Hessian of $F$ at the point $z^{(k)}$ is denoted
by $H_{k}$ and is invertible for every $k\in\IN_{K}$.
\end{lem}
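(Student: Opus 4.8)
The plan is to establish three facts in sequence: that $F$ attains a global minimum on a compact set, that each minimizer is isolated, and that a compact set of isolated points is finite; the invertibility claim then comes for free from the non-criticality hypothesis.

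First I would verify that $F$ is coercive, i.e. $F(x)\to\infty$ as $\lvert x\rvert\to\infty$. Since $J>0$ we also have $J^{-1}>0$, so the quadratic term satisfies $\tfrac12 x^{T}J^{-1}x\geq c\lvert x\rvert^{2}$ with $c=1/(2\lambda_{\max}(J))>0$. Using the elementary bound $\ln\cosh t\leq\lvert t\rvert$ (which follows from $\cosh t\leq e^{\lvert t\rvert}$), the logarithmic term is bounded in absolute value by $\sum_{l=1}^{M}\sqrt{\alpha_{l}}\,\lvert x_{l}\rvert\leq C\lvert x\rvert$. Hence $F(x)\geq c\lvert x\rvert^{2}-C\lvert x\rvert$, which tends to infinity. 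As $F$ is continuous and coercive, the global minimum is attained, and the minimizer set $\mathcal{M}:=\{x:F(x)=\min F\}$ is closed (as the preimage of a point under a continuous map) and bounded (by coercivity), hence compact.

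Next I would invoke the non-criticality hypothesis. By Definition \ref{def:non-crit}, the Hessian $H_{z}:=\nabla^{2}F(z)$ is positive definite at every $z\in\mathcal{M}$. Positive definiteness immediately yields invertibility, since all eigenvalues are strictly positive; this already proves the last assertion of the lemma. Moreover, positive definiteness forces each minimizer to be isolated: by Taylor's theorem around $z$ one has $F(y)=F(z)+\tfrac12(y-z)^{T}H_{z}(y-z)+o(\lvert y-z\rvert^{2})$, and the quadratic form is bounded below by $\lambda_{\min}(H_{z})\lvert y-z\rvert^{2}>0$, so $F(y)>F(z)$ for every $y\neq z$ in a punctured neighbourhood of $z$. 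No other minimizer can lie in this neighbourhood.

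Finally, finiteness follows from a standard compactness argument: around each $z\in\mathcal{M}$ pick an open ball $B_{z}$ containing no other point of $\mathcal{M}$. These balls cover the compact set $\mathcal{M}$, so finitely many of them suffice, and since each ball meets $\mathcal{M}$ in exactly one point, $\mathcal{M}$ is finite. I do not anticipate any serious obstacle here; the only step requiring mild care is the coercivity estimate, and everything else is a routine consequence of the positive-definiteness built into the non-critical case.
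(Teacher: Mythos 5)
Your proposal is correct and follows essentially the same route as the paper's own proof: coercivity (the paper phrases it as $F$ being dominated by $\tfrac12 x^{T}J^{-1}x$ for large $\lVert x\rVert$) confines the minimizers to a compact set, positive definiteness of the Hessian at each minimizer gives both invertibility and isolation, and compactness then forces finiteness (the paper argues via an accumulation point rather than your finite subcover, but these are interchangeable). You merely fill in details the paper leaves implicit, such as the bound $\ln\cosh t\leq\lvert t\rvert$ and the explicit Taylor expansion.
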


\begin{proof}
Since $F$ is dominated by the term $\frac{1}{2}x^{t}J^{-1}x$ for
large $\left\Vert x\right\Vert $, it follows that all points where
the minimum is attained must be contained in a compact set $B\subset\IR^{M}$
(we recall that $J^{-1}$ is positive definite). Suppose that $z$
is a point where the minimum is attained. The Hessian of $F$ is positive
definite at $z$ by assumption. This implies that there is a neighbourhood
$U\subset\IR^{M}$ of $z$ where $F$ attains its minimum only at
$z$. We obtain that the set of points where the minimum is reached
consists of isolated points, and it is a closed set because $F$ is
continuous. This set cannot be infinite, otherwise it would contain
an accumulation point of it since $B$ is compact, and this accumulation
point would not be isolated.
\end{proof}
We define
\begin{align}
\vec{Z}_{l} & :=\frac{1}{\sqrt{\sum_{k}\frac{1}{\det(H_{k})^{1/2}}}}\begin{pmatrix}\frac{1}{\det(H_{1})^{1/4}}\tanh\left(z_{l}^{(1)}\right)\\
\frac{1}{\det(H_{2})^{1/4}}\tanh\left(z_{l}^{(2)}\right)\\
\vdots\\
\frac{1}{\det(H_{K})^{1/4}}\tanh\left(z_{l}^{(K)}\right)
\end{pmatrix}\in\IR^{K},\quad l\in\IN_{M}.\label{eq:Zl}
\end{align}

\begin{rem}
\label{rem:Zl}Notice that the map
\begin{align*}
y\in\IR^{K} & \mapsto\frac{1}{\sqrt{\sum_{k}\frac{1}{\det(H_{k})^{1/2}}}}\begin{pmatrix}\frac{1}{\det(H_{1})^{1/4}}\tanh\left(y_{1}\right)\\
\frac{1}{\det(H_{2})^{1/4}}\tanh\left(y_{2}\right)\\
\vdots\\
\frac{1}{\det(H_{K})^{1/4}}\tanh\left(y_{K}\right)
\end{pmatrix}\in\IR^{K}
\end{align*}
is bijective. We assume that there are no $l$ and $m$ with $l\neq m$
such that $z_{l}^{(k)}=z_{m}^{(k)}$ holds for every $k=1,\ldots,K$.
This is equivalent to assuming that 
\begin{align*}
\vec{Z}_{l} & \ne\vec{Z}_{m},\quad l\neq m.
\end{align*}
The above implies that for every $l\neq m$ either $\left\Vert \vec{Z}_{l}\right\Vert \ne\left\Vert \vec{Z}_{m}\right\Vert $
or $\left\Vert \vec{Z}_{l}\right\Vert =\left\Vert \vec{Z}_{m}\right\Vert $
and $\left\langle \vec{Z}_{l},\vec{Z}_{m}\right\rangle <\left\Vert \vec{Z}_{l}\right\Vert ^{2}=\left\Vert \vec{Z}_{m}\right\Vert ^{2}$,
where the inequality derives from the non-collinearity of $\vec{Z}_{l}$
and $\vec{Z}_{m}$ and the equality condition in the Cauchy-Schwarz
inequality.
\end{rem}

\begin{thm}
\label{thm:low_temp}Assume the model is in the low temperature regime
and non-critical as per Definition \ref{def:non-crit}, i.e. the Hessian
$H_{k}$ of $F$ at $z_{k}$ is positive definite for all $k$. Then
the function $F$ defined in \eqref{eq:F} has a finite number of
minima, $z^{(1)},\ldots,z^{(K)}\in\IR^{M}$. There is a positive constant
$\boldsymbol{C}_{\textup{low}}$ such that for all $N\in\IN$
\[
\left|\mathbb{E}\left(X_{i}X_{j}\right)-\left\langle \vec{Z}_{\iota(i)},\vec{Z}_{\iota(j)}\right\rangle \right|\leq\boldsymbol{C}_{\textup{low}}\frac{\left(\ln N\right)^{\left(M+3\right)/2}}{\sqrt{N}},\quad i,j\in\IN_{N},i\neq j.
\]
\end{thm}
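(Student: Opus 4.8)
The plan is to represent the correlation $\IE(X_iX_j)$ through the de Finetti--type integral representation of $\IP$ (see \cite[Theorem 32]{KirsToth2022b}) and then extract its large-$N$ asymptotics by Laplace's method. A Hubbard--Stratonovich transformation decouples the quadratic Hamiltonian \eqref{eq:Hamiltonian} by introducing an auxiliary field $y\in\IR^{M}$ with Gaussian weight governed by $J^{-1}$; after the rescaling $y=\sqrt N\,x$ the spins become conditionally independent given $x$, the partition function takes the form $Z=c_{N}\int_{\IR^{M}}e^{-NF(x)}\,dx$ with $F$ as in \eqref{eq:F}, and each spin $X_{i}$ in group $l=\iota(i)$ acquires a conditional mean which is a hyperbolic tangent of the $l$-th (suitably scaled) coordinate, $g_{l}(x):=\tanh\!\big(x_{l}/\sqrt{\alpha_{l}}\big)$. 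Since distinct spins are conditionally independent, this yields the exact identity
\[
\IE(X_iX_j)=\frac{\int_{\IR^{M}}e^{-NF(x)}\,g_{\iota(i)}(x)\,g_{\iota(j)}(x)\,dx}{\int_{\IR^{M}}e^{-NF(x)}\,dx},\qquad i\neq j,
\]
in which numerator and denominator are Laplace integrals driven by the same phase $F$.

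Next I would carry out the Laplace analysis. By Lemma \ref{lem:minima}, $F$ has finitely many global minima $z^{(1)},\dots,z^{(K)}$, all sharing the common value $F_{\min}$, each with positive definite Hessian $H_{k}$. I would localise both integrals to balls $B_{k}=B(z^{(k)},r_{N})$ of radius $r_{N}:=C\sqrt{(\ln N)/N}$. Away from the minima $F-F_{\min}$ is bounded below by a positive constant, and on the annuli $r_{N}\le\|x-z^{(k)}\|\le\rho$ the local quadratic lower bound gives $F-F_{\min}\gtrsim r_{N}^{2}\sim(\ln N)/N$; hence the contribution from outside $\bigcup_{k}B_{k}$ is polynomially small of order $N^{-c}$ with $c$ growing with $C$, and is negligible for $C$ large. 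On each $B_{k}$ I would Taylor expand, writing $F(x)=F_{\min}+\tfrac12(x-z^{(k)})^{T}H_{k}(x-z^{(k)})+R_{k}(x)$ with cubic remainder $|R_{k}(x)|\le C\|x-z^{(k)}\|^{3}$, and replace $g_{\iota(i)}g_{\iota(j)}$ by its value at $z^{(k)}$. Performing the Gaussian integrals produces the weights $(2\pi/N)^{M/2}\det(H_{k})^{-1/2}e^{-NF_{\min}}$; the common factors $e^{-NF_{\min}}$ and $(2\pi/N)^{M/2}$ cancel between numerator and denominator, leaving
\[
\IE(X_iX_j)=\frac{\sum_{k}\det(H_{k})^{-1/2}\,g_{\iota(i)}(z^{(k)})\,g_{\iota(j)}(z^{(k)})}{\sum_{k}\det(H_{k})^{-1/2}}+(\text{error}).
\]
A direct algebraic comparison with the definition \eqref{eq:Zl} identifies the leading term with $\langle\vec Z_{\iota(i)},\vec Z_{\iota(j)}\rangle$.

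Finally I would quantify the error, whose dominant part comes from the cubic remainder and the first-order variation of $g$ on $B_{k}$. On $B_{k}$ one has $N|R_{k}(x)|\le C N r_{N}^{3}\sim(\ln N)^{3/2}/\sqrt N\to0$, so $|e^{-NR_{k}}-1|\lesssim(\ln N)^{3/2}/\sqrt N$; bounding the resulting error integral crudely by the supremum of the integrand times the volume of $B_{k}$, which is of order $r_{N}^{M}\sim((\ln N)/N)^{M/2}$, and dividing by the denominator of order $N^{-M/2}$, yields a relative error of order $(\ln N)^{(M+3)/2}/\sqrt N$, as claimed. The constant $\boldsymbol{C}_{\textup{low}}$ may be taken uniform in $N$ and in the pair $(i,j)$, since it depends only on $J$, the $\alpha_{l}$, and the finitely many minima and Hessians.

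The main obstacle is the rigorous and uniform control of the Laplace approximation: one must bound the cubic remainder uniformly on the shrinking balls $B_{k}$, show that the off-minima region is genuinely negligible at the stated rate, and verify that \emph{every} global minimum contributes with exactly the weight $\det(H_{k})^{-1/2}$ so that the exponential factors cancel cleanly. Establishing the integral representation precisely---in particular the conditional-independence structure underlying $g_{l}$---is the other ingredient that must be in place before the asymptotics can be invoked.
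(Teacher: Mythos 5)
Your proposal follows essentially the same route as the paper: the de Finetti/Hubbard--Stratonovich representation $\IE(X_iX_j)=Z_2(N)/Z_0(N)$, localisation of both Laplace integrals to balls of radius $\sim\sqrt{(\ln N)/N}$ around the finitely many minima (with a quadratic lower bound on $F-F_{\min}$ away from them, the paper's Lemma \ref{LChin}), a cubic Taylor remainder for $F$, and Gaussian integration producing the weights $\det(H_k)^{-1/2}$, with identical error bookkeeping yielding $(\ln N)^{(M+3)/2}/\sqrt N$. The only cosmetic difference is that you freeze $\tanh(x_{\iota(i)}/\sqrt{\alpha_{\iota(i)}})\tanh(x_{\iota(j)}/\sqrt{\alpha_{\iota(j)}})$ at $z^{(k)}$ while the paper expands it to second order, but your cruder replacement still fits within the claimed rate, so the proofs are essentially the same.
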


This theorem is proved in Section \ref{sec:Proof-thm_bounds}.

The main results of this article, the proof that the community detection
problem has a solution with high probability and the algorithms for
the reconstruction of the group structure of the model are found in
Sections \ref{subsec:High-Temp} and \ref{subsec:Low-Temp}, respectively.
Theorems \ref{thm:high_temp} and \ref{thm:low_temp} allow us to
calculate approximations for the correlations $\IE\left(X_{i}X_{j}\right)$
between votes for large $N$. We will use these approximations as
a benchmark for the empirical correlations obtained from a sample
of observations $x^{(1)},x^{(2)},\ldots,x^{(n)}\in\left\{ -1,1\right\} ^{N}$
of voting configurations from the model:
\[
\frac{1}{n}\sum_{t=1}^{n}x_{i}^{(t)}x_{j}^{(t)},\quad i\neq j
\]
(cf. formulae \eqref{stat} and \eqref{eq:emp_corr}). From the asymptotic
analysis of the block spin Ising model in Theorems \ref{thm:high_temp}
and \ref{thm:low_temp}, we know the large $N$ value of $\IE\left(X_{i},X_{j}\right)$,
and by the law of large numbers the empirical correlations converge
to these values as $n$ goes to infinity. The empirical correlations
also satisfy a large deviations principle (see Lemmas \ref{High}
and \ref{Low} and Proposition \ref{PropSanov}) which allows us to
upper bound the probability of a significant deviation of the empirical
correlations from these values by a function which exponentially decays
to 0 with the number of observations $n$. Thus, with high probability,
we obtain a sample of observations that are typical in that there
are no large deviations of the empirical correlations from their expected
values. Then we use an iterative algorithm to define an equivalence
relation on the set of voters $\IN_{N}$ that corresponds to the underlying
group structure represented by $\iota$ which is assumed to be unknown
to the observer of the voting configurations $x^{(1)},x^{(2)},\ldots,x^{(n)}$.
We next describe the algorithm informally for the high temperature
regime (the low temperature regime algorithm is structurally similar).
See the proofs of Theorems \ref{HighIdent} and \ref{LowIdent} for
a rigorous description and Section \ref{subsec:Example} for an example
of the application of this algorithm.

We take a correlation 
\[
\IE\left(X_{i^{*}}X_{j^{*}}\right)=\max_{i\neq j}\left\{ \IE\left(X_{i}X_{j}\right)\right\} 
\]
which according to Corollary \ref{cor:CS_H}, in the high temperature
regime, corresponds to voters $i^{*},j^{*}\in\IN_{N},i^{*}\neq j^{*}$,
who both belong to the same group, namely group $l\in\IN_{M}$ with
the largest value $H_{l,l}^{-1}$ (cf. Theorem \ref{thm:high_temp}).
We then identify the largest empirical correlations, which according
to Lemma \ref{Low} belong to those voters $i$ and $j$ who indeed
belong to said group, i.e. $\iota(i)=\iota(j)=l$. Having identified
the voters belonging to group $l$, we remove from our set of empirical
correlations all those elements that correspond to voter pairs which
include at least one of the indices just identified as belonging to
group $l$. Then we pick the group $l'\in\IN_{M}$ with the next largest
value $H_{l',l'}^{-1}$, and repeat the last step. In each step, we
identify at least one group of voters. Hence, the algorithm terminates
after at most $M$ steps. Thus, we provide an explicit algorithm of
how to reconstruct the structure of the model.

The main theorems of this article are
\begin{thm}
[Group Identification and Reconstruction Procedure in the High Temperature Regime]\label{HighIdent}Let
the model be in the high temperature regime. There is a fixed natural
number $N_{{\rm high}}$ and a constant $\boldsymbol{\delta}>0$ (cf.
Definition \ref{DefinitionNd}) such that if $N\geq N_{{\rm high}}$
and $n\in\IN$, a sample of $n$ observations $x^{(1)},x^{(2)},\ldots,x^{(n)}$
allows us to recover the group partition with high probability. The
probability that we cannot recover the group partition is bounded
above by the exponentially decaying function 
\begin{align*}
e_{\textup{high}}(n) & :=N^{2}(n+1)^{2}e^{-\frac{1}{8}\left(\frac{1}{8N}\boldsymbol{\delta}\right)^{2}n}.
\end{align*}
\end{thm}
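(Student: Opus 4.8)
The plan is to split the argument into a deterministic separation statement and a probabilistic concentration statement, and then to show that the iterative algorithm succeeds deterministically as soon as every empirical correlation is close enough to its expectation. First I would fix the constant $\boldsymbol{\delta}$ of Definition \ref{DefinitionNd} to be the smallest positive gap between the rescaled limiting correlations $H^{-1}_{l,m}$, $l,m\in\IN_{M}$, i.e. the minimum of $\left|H^{-1}_{l,m}-H^{-1}_{l',m'}\right|$ over all group-index pairs yielding distinct values, using that by Corollary \ref{cor:CS_H} (a Cauchy--Schwarz estimate) the within-group entries strictly dominate, $H^{-1}_{l,m}<\max_{k}H^{-1}_{k,k}$ for $m\neq l$. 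As there are finitely many groups, $\boldsymbol{\delta}>0$ and is independent of $N$. By Theorem \ref{thm:high_temp}, $\left|N\,\IE\left(X_{i}X_{j}\right)-H^{-1}_{\iota(i),\iota(j)}\right|$ is of order $(\ln N)^{(6+M)/2}/N$, so there is $N_{\textup{high}}$ with this error below $\boldsymbol{\delta}/8$ whenever $N\geq N_{\textup{high}}$; hence rescaled true correlations for the same group-pair differ by at most $\boldsymbol{\delta}/4$, while those for group-pairs with distinct limiting values differ by at least $\boldsymbol{\delta}/2$.

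Next I would introduce the good event on which every empirical correlation is within $\boldsymbol{\delta}/(8N)$ of its expectation,
\[
\mathcal{G}:=\left\{\left|\frac1n\sum_{t=1}^{n}x_{i}^{(t)}x_{j}^{(t)}-\IE\left(X_{i}X_{j}\right)\right|\leq\frac{\boldsymbol{\delta}}{8N}\ \text{ for all }i\neq j\right\}.
\]
For fixed $i\neq j$ the products $x_{i}^{(t)}x_{j}^{(t)}\in\{-1,1\}$, $t=1,\dots,n$, are i.i.d. with mean $\IE\left(X_{i}X_{j}\right)$, so the large deviations estimate of Lemma \ref{High}, obtained through the method of types in Proposition \ref{PropSanov}, bounds the probability that the empirical correlation leaves the interval of radius $\boldsymbol{\delta}/(8N)$: the count of types supplies the polynomial prefactor $(n+1)^{2}$ and the Chernoff exponent supplies the rate $\tfrac18\left(\tfrac{1}{8N}\boldsymbol{\delta}\right)^{2}$. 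A union bound over the fewer than $N^{2}$ pairs then gives $\IP\left(\mathcal{G}^{c}\right)\leq N^{2}(n+1)^{2}e^{-\frac18\left(\frac{1}{8N}\boldsymbol{\delta}\right)^{2}n}=e_{\textup{high}}(n)$.

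Finally I would show that $\mathcal{G}$ forces exact recovery. On $\mathcal{G}$, adding the fluctuation radius $\boldsymbol{\delta}/(8N)$ to the deterministic separation established above, each empirical correlation lies within $\boldsymbol{\delta}/(4N)$ of $H^{-1}_{\iota(i),\iota(j)}/N$, and the bands around distinct limiting values remain disjoint. I would then verify the algorithm stage by stage: at each stage the largest surviving empirical correlation is attained by a within-group pair of the remaining group $l$ with the largest diagonal $H^{-1}_{l,l}$ (Corollary \ref{cor:CS_H}), and a surviving voter $k$ satisfies $\iota(k)=l$ exactly when its empirical correlation with an already-identified member of $l$ falls into the top band, the strict inequality $H^{-1}_{l,m}<H^{-1}_{l,l}$ for $m\neq l$ ruling out misclassification. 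Removing the identified group and repeating, the same dominance and gap persist on the remaining voters, so after at most $M$ stages $\iota$ is reconstructed exactly; since failure can occur only on $\mathcal{G}^{c}$, the probability of failure is at most $e_{\textup{high}}(n)$.

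I expect the principal difficulty to lie in the inductive correctness of the algorithm on $\mathcal{G}$, rather than in the concentration bound: one must confirm that after each removal the within-group dominance and the gap $\boldsymbol{\delta}$ survive for the reduced family of groups, and that the relation ``empirical correlation in the top band'' is transitive and coincides with $\iota$, so that no voter is merged into or split off from a wrong group. The concentration step, by contrast, is a routine packaging of the cited large deviations results with a union bound.
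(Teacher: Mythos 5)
Your proposal follows essentially the same route as the paper: the same Cauchy--Schwarz separation (Corollary \ref{cor:CS_H}) defining a gap constant, the same choice of $N_{\textup{high}}$ making the Theorem \ref{thm:high_temp} error at most $\boldsymbol{\delta}/(8N)$, the same Sanov/method-of-types bound with a union bound over the fewer than $N^{2}$ pairs yielding exactly $e_{\textup{high}}(n)$, and the same band-clustering-with-removal algorithm whose stagewise correctness the paper formalizes via the sets $G_{u},A_{u}$ and the induction in Lemmas \ref{lem:induction_base}--\ref{lem:step_LR}. The only deviation is cosmetic: your $\boldsymbol{\delta}$ (the minimum gap over all distinct entries of $H^{-1}$) is possibly smaller than, but serves the same purpose as, the $\min\{\eta,\xi\}$ of Definition \ref{DefinitionNd}, and still validates the theorem as stated.
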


and
\begin{thm}
[Group Identification and Reconstruction Procedure in the Low Temperature Regime]\label{LowIdent}Let
the model be in the low temperature regime and non-critical. Let $z^{(1)},\ldots,z^{(K)}\in\IR^{M}$
be the minima of the function $F$, and assume there are no $l$ and
$m$ with $l\neq m$ such that $z_{l}^{(k)}=z_{m}^{(k)}$ holds for
every $k=1,\ldots,K$. There is a fixed natural number $N_{{\rm low}}$
and a constant $\boldsymbol{\gamma}>0$ (cf. Definition \ref{DefinitionNdl})
such that if $N\geq N_{{\rm low}}$ and $n\in\IN$, a sample of $n$
observations $x^{(1)},x^{(2)},\cdots,x^{(n)}$ allows us to recover
the group partition with high probability. The probability that we
cannot recover it is bounded above by the following exponentially
decaying function 
\begin{align*}
e_{{\rm low}}(n) & :=N^{2}(n+1)^{2}e^{-\frac{1}{8}\left(\frac{1}{8}\boldsymbol{\gamma}\right)^{2}n}.
\end{align*}
\end{thm}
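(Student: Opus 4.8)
The plan is to follow the same three-layer scheme used for the high temperature regime in Theorem \ref{HighIdent}, replacing the limiting correlations $H_{\iota(i),\iota(j)}^{-1}/N$ by the low temperature limits $\langle \vec{Z}_{\iota(i)},\vec{Z}_{\iota(j)}\rangle$ supplied by Theorem \ref{thm:low_temp}. The three layers are: (i) a purely deterministic separation property of the limiting correlations, which distinguishes within-group from between-group pairs; (ii) a transfer of this separation to the true expectations $\IE(X_iX_j)$ for all $N\geq N_{\textup{low}}$, using the error bound of Theorem \ref{thm:low_temp}; and (iii) a transfer to the empirical correlations $\frac1n\sum_t x_i^{(t)}x_j^{(t)}$ on a high-probability event, using the large deviations estimates of Lemma \ref{Low} and Proposition \ref{PropSanov}. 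On that event the reconstruction algorithm is deterministic and correct, so the failure probability is at most the probability that the event fails.

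First I would fix the separation constant $\boldsymbol{\gamma}$ of Definition \ref{DefinitionNdl}. The finitely many minima $z^{(1)},\dots,z^{(K)}$ exist by Lemma \ref{lem:minima}, so the vectors $\vec{Z}_l$, $l\in\IN_M$, and hence the finite set of limiting correlations $\{\langle \vec{Z}_l,\vec{Z}_m\rangle:l,m\in\IN_M\}$ are well defined. The decisive input is Remark \ref{rem:Zl}: for $l\neq m$ one has either $\|\vec{Z}_l\|\neq\|\vec{Z}_m\|$, or $\|\vec{Z}_l\|=\|\vec{Z}_m\|$ together with the strict inequality $\langle \vec{Z}_l,\vec{Z}_m\rangle<\|\vec{Z}_l\|^2=\|\vec{Z}_m\|^2$ coming from Cauchy--Schwarz and non-collinearity. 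In particular, for every group $l$ the diagonal value $\langle \vec{Z}_l,\vec{Z}_l\rangle=\|\vec{Z}_l\|^2$ strictly exceeds $\langle \vec{Z}_l,\vec{Z}_m\rangle$ for every $m\neq l$ with $\|\vec{Z}_m\|\leq\|\vec{Z}_l\|$, and any two diagonal values are either equal or separated. I would take $\boldsymbol{\gamma}$ to be a positive lower bound for all the gaps on which the algorithm's decisions rest, and then choose $N_{\textup{low}}$ so large that $\boldsymbol{C}_{\textup{low}}(\ln N)^{(M+3)/2}/\sqrt{N}<\tfrac18\boldsymbol{\gamma}$ for $N\geq N_{\textup{low}}$. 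By Theorem \ref{thm:low_temp} the true expectations $\IE(X_iX_j)$ then inherit the same separation structure, up to an error smaller than a fixed fraction of $\boldsymbol{\gamma}$, for every $N\geq N_{\textup{low}}$.

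Next I would define the good event $G$ that every empirical correlation deviates from its expectation $\IE(X_iX_j)$ by at most $\tfrac18\boldsymbol{\gamma}$. A union bound over the at most $N^2$ index pairs $i\neq j$, each controlled by Proposition \ref{PropSanov} applied as in Lemma \ref{Low} with polynomial prefactor $(n+1)^2$ and deviation threshold $\tfrac18\boldsymbol{\gamma}$, yields $\IP(G^c)\leq N^2(n+1)^2\exp\!\big(-\tfrac18(\tfrac18\boldsymbol{\gamma})^2 n\big)=e_{\textup{low}}(n)$. Note that, unlike in the high temperature case, no factor $1/N$ appears inside the exponent because the low temperature correlations are of order one rather than of order $1/N$; this is exactly the reason $\boldsymbol{\gamma}$ replaces $\boldsymbol{\delta}/N$ in the rate.

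Finally, on $G$ the empirical correlations lie within $\tfrac14\boldsymbol{\gamma}$ of the limiting values $\langle \vec{Z}_{\iota(i)},\vec{Z}_{\iota(j)}\rangle$, so they preserve the separation structure and the algorithm succeeds deterministically. I would run the iterative scheme: the global maximal empirical correlation is attained, up to the fluctuation band, exactly by within-group pairs of a group of maximal norm $\|\vec{Z}_l\|$; collecting all pairs whose empirical correlation lies in the top $\boldsymbol{\gamma}$-band and passing to the equivalence classes of the induced relation recovers the corresponding group or groups, after which one deletes these voters and repeats, terminating in at most $M$ steps. The step I expect to be the main obstacle is verifying that this induced relation is genuinely an equivalence relation whose classes are the true groups even when several groups share the same norm $\|\vec{Z}_l\|$: in that case the top band contains within-group pairs of more than one group, and it is precisely the strict inequality $\langle \vec{Z}_l,\vec{Z}_m\rangle<\|\vec{Z}_l\|^2$ of Remark \ref{rem:Zl}, combined with the choice of $\boldsymbol{\gamma}$ narrower than this strict gap, that keeps those groups in distinct equivalence classes and guarantees transitivity. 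Carrying this verification through every stage of the iteration, while checking that deleting already-identified groups does not destroy the separation among the remaining ones, is the crux of the argument.
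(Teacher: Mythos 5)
Your proposal follows the paper's own proof essentially step for step: your separation constant is exactly the paper's $\boldsymbol{\gamma}=\min\{\eta,\xi\}$ from Definition \ref{DefinitionNdl} (with $\xi$ handling precisely the equal-norm case you flag as the crux, via the strict Cauchy--Schwarz gap of Remark \ref{rem:Zl}), your choice of $N_{\textup{low}}$ matches \eqref{pp1l}, your good event and union bound reproduce Remark \ref{rem:union_bound_low} and Lemma \ref{Low}, and your iterative top-band/delete scheme is the paper's $G_u$, $A_u$ recursion leading to Proposition \ref{prop:equiv-1}. The verification you defer (that the induced relation is an equivalence relation matching $\iota$ even when several groups share a norm) is carried out in the paper by the induction of Lemmas \ref{lem:induction_base}--\ref{lem:step_LR} transplanted from the high temperature case, exactly as you anticipate.
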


\begin{rem}
The upper bounds for the probabilities that the community detection
problem has a solution are tight in the sense that Sanov's Theorem
provides a corresponding lower bound as well, and said lower bound
features an exponential term
\[
e^{-\frac{1}{8}\left(\frac{1}{8N}\boldsymbol{\delta}'\right)^{2}n}\quad\textup{and}\quad e^{-\frac{1}{8}\left(\frac{1}{8}\boldsymbol{\gamma}'\right)^{2}n},
\]
 respectively, for some $\boldsymbol{\delta}'\geq\boldsymbol{\delta}$
in the high temperature regime and some $\boldsymbol{\gamma}'\geq\boldsymbol{\gamma}$
in the low temperature regime.
\end{rem}

\begin{rem}
In the original articles \cite{BRS2019,LoweSchu2020} concerning the
community detection problem in the block spin Ising model, the two
groups considered where identical in all respects -- group sizes
and coupling constants. Therefore, community detection was only possible
in terms of finding an equivalence relation on the set of voters $\IN_{N}$
such as those constructed in the proofs of Theorems \ref{HighIdent}
and \ref{LowIdent}. For the more general situation of several groups
of arbitrary sizes and coupling constants, we can go beyond this solution
and identify the equivalence classes detected and specific groups
of the model. One such case is when all groups are of different sizes,
i.e. $N_{l}\neq N_{m}$ for all $l\neq m$. Another example is $H_{l,l}^{-1}\neq H_{m,m}^{-1}$
for all $l\neq m$. In fact, there is a way to explicitly identify
the groups in most cases. Also, the algorithm allows identifying the
group structure in case there is imperfect information about the structure
of the model in terms of group sizes and coupling constants.
\end{rem}

In the next subsection, we present an example for the application
of the community detection algorithm to the problem of extremist political
movements. The rest of this article consists of Section \ref{sec:Exact-Recovery}
where we formally state and prove the community detection results,
Section \ref{sec:Proof-thm_bounds} in which we prove Theorems \ref{thm:high_temp}
and \ref{thm:low_temp}, and finally the Appendix which includes some
results we use in the proofs in order to make the article as self-contained
as possible.

\subsection{\label{subsec:Example}Detection of the Members of Extremist Political
Parties}

In this section, we describe how our community detection algorithms
can be employed to detect members of political movements or parties.
In many countries, extremist political parties are forbidden by the
authorities. Membership is usually prohibited and heavily penalised.
As such, admitting to membership risks prosecution up to lengthy prison
sentences. Even if an extremist political party is not prohibited,
or else it is not a formal party but more of a loose movement or association
of like-minded people, it may still be the case that members do not
openly identify as such. Censure from mainstream segments of the population
leads to the phenomenon of self-censoring and avoidance of stating
their membership openly. Thus, if we have the task of quantifying
the membership numbers of extremist parties in a certain population
of $N$ people, asking for their political affiliation directly may
not lead to accurate numbers, as extremist affiliations will likely
be underrepresented among the responses. Instead, we take the indirect
route of applying a questionnaire  with $n$ questions with binary
responses. These can be yes/no questions but also questions asking
about two possible solutions to current political or societal problems.

Suppose there are two political parties in a country. Party 1 is an
extremist political movement shunned by sympathisers of party 2, which
is a mainstream moderate party, and `party 3', the remainder of
the population which is not politically engaged. We endeavour to assign
each respondent correctly to one of the three parties. Suppose we
have a sample $x^{(1)},\ldots,x^{(n)}$ of $n$ observations of voting
configurations
\[
x^{(t)}=\left(x_{1}^{(t)},\ldots,x_{N}^{(t)}\right)\in\left\{ -1,1\right\} ^{N},\quad t\in\IN_{n},
\]
obtained from the responses to our questionnaire for the entire population
of $N\in\IN$ voters. The assumption about the population structure
implies there are three groups in our block spin Ising model (cf.
Definition \ref{def:CWM}). The coupling matrix $J=\left(J_{l,m}\right)_{l,m\in\IN_{3}}$
defined in \eqref{eq:J} is assumed to be given by
\[
J=\left(\begin{array}{ccc}
0.7 & 0.2 & 0.2\\
0.2 & 0.6 & 0.1\\
0.2 & 0.1 & 0.5
\end{array}\right),
\]
which is positive definite. Moreover, it is easily checked that the
matrix
\[
I-J=\left(\begin{array}{ccc}
0.3 & -0.2 & -0.2\\
-0.2 & 0.4 & -0.1\\
-0.2 & -0.1 & 0.5
\end{array}\right)
\]
is positive definite. Therefore, the model exhibits weak interactions
between voters by virtue of being in the high temperature regime.
We set $H:=J^{-1}-I$ (cf. Theorem \ref{thm:high_temp}), calculate
\[
H^{-1}=\left(\begin{array}{ccc}
13.6154 & 9.23077 & 7.69231\\
9.23077 & 7.46154 & 5.38462\\
7.69231 & 5.38462 & 5.15385
\end{array}\right),
\]
and order the diagonal entries of $H^{-1}$:
\[
H_{1,1}^{-1}>H_{2,2}^{-1}>H_{3,3}^{-1}.
\]
We have
\begin{align*}
\eta & :=\min_{l\ne m}\left\{ \max\left\{ H_{l.l}^{-1},H_{m,m}^{-1}\right\} -H_{l,m}^{-1}\right\} =2.07692,\\
\xi & :=\min_{l\ne m}\left\{ \left.\left|H_{l.l}^{-1}-H_{m,m}^{-1}\right|\;\right|\;H_{l.l}^{-1}\neq H_{m,m}^{-1}\right\} =2.30769,
\end{align*}
and set 
\begin{align*}
\boldsymbol{\delta} & :=\min\left\{ \eta,\xi\right\} =2.07692.
\end{align*}
Next, we calculate the empirical correlations between two votes for
each pair $i,j\in\IN_{N}$, $i\neq j$,
\[
\bar{y}_{i,j}:=\frac{1}{n}\sum_{t=1}^{n}x_{i}^{(t)}x_{j}^{(t)}.
\]
Inspecting the empirical correlations, we notice that there is a cluster
of $\bar{y}_{i,j}$ around the value $\frac{H_{1,1}^{-1}}{N}$, contained
in the open interval $\left(\frac{H_{1,1}^{-1}}{N}-\frac{\boldsymbol{\delta}}{2},\frac{H_{1,1}^{-1}}{N}+\frac{\boldsymbol{\delta}}{2}\right)$.
We find that this cluster is composed of $\frac{N_{1}\left(N_{1}-1\right)}{2}$
empirical correlations of the total $\frac{N\left(N-1\right)}{2}$
correlations corresponding to the population as a whole. Since $H_{1,1}^{-1}$
is the largest diagonal entry, the $N_{1}$ indices $i$ to be found
in the cluster of $\bar{y}_{i,j}$ around the value $\frac{H_{1,1}^{-1}}{N}$
correspond to the members of group 1, i.e. members of the extremist
political party. We now exclude all indices $i$ belonging to extremists
from our set of empirical correlations and thus obtain a reduced set
of $\frac{\left(N_{2}+N_{3}\right)\left(N_{2}+N_{3}-1\right)}{2}$
of $\bar{y}_{i,j}$. We inspect this set of correlations and find
that there is a cluster of $\bar{y}_{i,j}$ around the value $\frac{H_{2,2}^{-1}}{N}$,
in an open interval $\left(\frac{H_{2,2}^{-1}}{N}-\frac{\boldsymbol{\delta}}{2},\frac{H_{2,2}^{-1}}{N}+\frac{\boldsymbol{\delta}}{2}\right)$,
which contains $\frac{N_{2}\left(N_{2}-1\right)}{2}$ correlations.
The $N_{2}$ indices to be found in these correlations belong to members
of the mainstream political party. We exclude all $N_{2}$ of these
indices from our correlations. The remaining $\frac{N_{3}\left(N_{3}-1\right)}{2}$
correlations featuring $N_{3}$ indices belong to non-political members
of the population. We note that the fortuitous clustering of the empirical
correlations around the values $\frac{H_{l,l}^{-1}}{N}$ is, in fact,
a high probability event, provided the sample size $n$ is large enough.
(Cf. Theorems \ref{HighIdent} and \ref{LowIdent}.) This concludes
the reconstruction of this model's group structure from the voting
data in the sample.

\section{\label{sec:Exact-Recovery}Exact Recovery of the Group Structure}

In this section we prove the two main Theorems \ref{HighIdent} and
\ref{LowIdent} of this article. We will use the Theorems \ref{thm:high_temp}
and \ref{thm:low_temp}, which are proved in Section \ref{sec:Proof-thm_bounds}.

\subsection{Large Deviations Theory}

Here we will define the probability space on which the samples of
any possible size $n\in\IN$ generated by the model defined in Definition
\ref{def:CWM} live.

Let 
\begin{align*}
\Xi & :=\{-1,1\}
\end{align*}
be equipped with the $\sigma$-algebra $\mathcal{F}$ of all subsets
of $\Xi$. Let
\[
\Omega_{1}:=\prod_{i=1}^{N}\Xi=\Xi^{\IN_{N}}.
\]
 Moreover, we define 
\begin{align*}
\Omega_{n} & :=\prod_{i=1}^{n}\Omega_{1}=\Omega_{1}^{\IN_{n}}\quad n\in\IN,
\end{align*}
provided with the $\sigma$-algebra $\mathcal{F}_{n}$ of all subsets.
We set 
\begin{align*}
\Omega & :=\prod_{i=1}^{\infty}\Omega_{1}=\Omega_{1}^{\mathbb{N}},
\end{align*}
with the $\sigma$-algebra $\boldsymbol{\mathcal{F}}$ generated by
the cylinder sets of the from $\mathcal{U}\times\Omega_{1}^{\{n+1,n+2,\ldots\}}$,
$\mathcal{U}\in\Omega_{n}$ and $n\in\mathbb{N}$.

We denote by 
\begin{align*}
\omega & =\left\{ \omega^{(t)}\right\} _{t\in\mathbb{N}}
\end{align*}
the elements of $\Omega$. Here, $\omega^{(t)}\in\Omega_{1}$ for
every $t$. We set the projections $X^{(t)}:\Omega\to\Omega_{1}$
given by 
\begin{align*}
X^{(t)}(\omega) & =\omega^{(t)}\in\Omega_{1},\quad t\in\IN,
\end{align*}
and
\[
X_{i}^{(t)}(\omega)=\omega_{i}^{(t)}\in\Xi,\quad t\in\IN,i\in\IN_{N}.
\]
The random vector $X^{(t)}$ corresponds to observation $t\in\IN_{n}$.
We define a product measure $\boldsymbol{\nu}$ on $\left(\Omega,\boldsymbol{\mathcal{F}}\right)$
using Kolmogorov's extension theorem such that each $X^{(t)}$ has
a Curie-Weiss distribution according to Definition \ref{def:CWM}:
for all $x\in\Omega_{1}$ and all $t\in\IN$, 
\[
\boldsymbol{\nu}\left(\left\{ \omega\in\Omega\,|\,\omega^{(t)}=x\right\} \right)=Z^{-1}e^{-\mathbb{H}\left(x_{1},\ldots,x_{N}\right)}.
\]

We define the random variables 
\begin{align*}
Y_{i,j}^{(t)}:=X_{i}^{(t)}X_{j}^{(t)} & ,\quad i,j\in\IN_{N},i\neq j.
\end{align*}
We will next define a probability measure $\boldsymbol{\mu}_{i,j}$
on $\Omega$ under which, for fixed $i$ and $j$, $Y_{i,j}^{(1)},Y_{i,j}^{(2)},\ldots$
are independent and identically distributed (i.i.d.) random variables,
for fixed $t\in\IN$, $Y_{i,j}^{(t)},Y_{i',j'}^{(t)},\ldots$ are
in general neither independent nor identically distributed. We identify
$X_{i}^{(1)}\equiv X_{i}$ with the random variables whose joint distribution
is given by $\IP$ in Definition \ref{def:CWM}. First we define the
probability measure $\mu_{i,j}:\mathcal{F}\to[0,1]$ by 
\begin{align*}
\mu_{i,j}(\{r\}) & :=\boldsymbol{\nu}\left(\left\{ \omega\in\Omega\,\left|\,\omega_{i}^{(1)}\omega_{j}^{(1)}=r\right.\right\} \right).
\end{align*}
For every $n\in\mathbb{N}$, we define the product measure on $\mathcal{F}_{n}$
by 
\begin{align*}
\mu_{i,j}^{n} & :=\mu_{i,j}^{\otimes(n)}.
\end{align*}
Using Kolmogorov's extension theorem, we define the measure $\boldsymbol{\mu}_{i,j}$
on $\boldsymbol{\mathcal{F}}$ such that 
\[
\boldsymbol{\mu}_{i,j}\left(\mathcal{U}\times\Omega_{1}^{\{n+1,n+2,\ldots\}}\right)=\mu_{i,j}^{n}(\mathcal{U})
\]
for all such cylinder sets $\mathcal{U}\times\Omega_{1}^{\{n+1,n+2,\ldots\}}$
described above.

We denote by $L_{n}^{Y_{i,j}}:\mathcal{F}\to[0,1]$ the empirical
measure associated to the process $\left(Y_{i,j}^{(t)}\right)_{t\in\mathbb{N}}$,
i.e. 
\begin{align}
L_{n}^{Y_{i,j}}(\{r\})=\frac{1}{n}\left|\Big\{ s\in\IN_{n}\;\Big|\;Y_{i,j}^{(s)}=r\Big\}\right|, & \quad r\in\Xi.\label{p1}
\end{align}
We denote by $\mathcal{M}_{1}\left(\Xi\right)$ the set of probability
measures on $\Xi$, equipped with the total variation topology. Let
$C\left(\Xi\right)$ be the set of real-valued functions on $\Xi$
and we identify it with $\mathbb{R}^{2}$ using the map $f\mapsto(f(1),f(2))$.
We provide $C\left(\Xi\right)$ with the topology of $\mathbb{R}^{2}.$
We use the symbol $I\in C\left(\Xi\right)$ for the identity $I(r)=r,r\in\Xi,$
and for every $f\in C\left(\Xi\right)$ and every $\nu\in\mathcal{M}_{1}\left(\Xi\right)$
we define 
\[
\nu(f):=\int f\,\textup{d}\nu.
\]
Therefore, we identify every measure $\nu$ with a continuous real-valued
function on $C\left(\Xi\right).$ We set 
\begin{align*}
\bar{Y}_{i,j}^{(n)} & :=\frac{1}{n}\sum_{t=1}^{n}Y_{i,j}^{(t)}=L_{n}^{Y_{i,j}}(I),
\end{align*}
the average of $Y_{i,j}^{(t)}$ over all $t\in\IN_{N}$. We set for
every $\varepsilon\in(0,1)$
\begin{align}
\mathcal{M}_{1}\left(\Xi\right)\supset\mathcal{O}_{i,j}(\varepsilon) & :=\Big\{\nu\in\mathcal{M}_{1}\left(\Xi\right)\;\left|\;\left|\nu(I)-\mu_{i,j}(I)\right|\geq\varepsilon\right.\Big\},\label{calo}
\end{align}
which is a closed set that does not contain $\mu_{i,j}$. Note that
$\mu_{i,j}(I)=\mathbb{E}\left(X_{\iota(i)}X_{\iota(j)}\right)$. We
set 
\begin{align*}
\Omega\supset\mathcal{O}_{i,j}^{n}(\varepsilon) & :=\left\{ \omega\in\Omega\;\left|\;L_{n}^{Y_{i,j}(\omega)}\in\mathcal{O}_{i,j}(\varepsilon)\right.\right\} .
\end{align*}

For every two measures, $\nu,\theta\in\mathcal{M}_{1}\left(\Xi\right),$
we denote by 
\begin{align*}
H(\nu\,|\,\theta) & :=\sum_{r\in\Omega_{1}}\nu(\{r\})\log\frac{\nu(\{r\})}{\theta(\{r\})}
\end{align*}
the relative entropy of $\nu$ with respect to $\theta$. Notice that
this function $H\left(\cdot\,|\,\theta\right):\mathcal{M}_{1}\left(\Xi\right)\rightarrow\IR\cup\left\{ \infty\right\} $
is non-negative and concave as a function of $\nu$, and $\theta$
is its only minimum \cite[Definition 2.1.5 and the following remark]{DembZeit1998}.
\begin{lem}
\label{lemep} Let $\varepsilon\in(0,1)$ and suppose that $\nu,\theta\in\mathcal{M}_{1}\left(\Xi\right)$
are such that 
\begin{align}
|\nu(I)-\theta(I)| & \geq\varepsilon.\label{ptv1}
\end{align}

It follows that 
\begin{align}
H(\nu\,|\,\theta) & \geq\frac{1}{8}\varepsilon^{2}.\label{ptv2}
\end{align}
\end{lem}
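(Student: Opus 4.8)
The plan is to reduce the statement to a computation on the two-point space $\Xi=\{-1,1\}$ and then to prove an elementary binary version of Pinsker's inequality. Since $I(r)=r$, writing $p:=\nu(\{1\})$ and $q:=\theta(\{1\})$ gives $\nu(I)=2p-1$ and $\theta(I)=2q-1$, so the hypothesis $\left|\nu(I)-\theta(I)\right|\geq\varepsilon$ is exactly $2\left|p-q\right|\geq\varepsilon$, i.e. $\left|p-q\right|\geq\varepsilon/2$. In the same variables the relative entropy becomes the binary relative entropy
\[
H(\nu\,|\,\theta)=d(p\,\|\,q):=p\log\frac{p}{q}+(1-p)\log\frac{1-p}{1-q},
\]
so everything is now a one-variable problem in $p$, with $q$ a fixed parameter.

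First I would dispose of the degenerate cases. If $q\in\{0,1\}$ and $p\neq q$, then one of the two terms of $d(p\,\|\,q)$ equals $+\infty$ by the usual convention, and the inequality is trivial; if $p=q$ the hypothesis fails for $\varepsilon>0$. Hence I may assume $q\in(0,1)$. The heart of the argument is the elementary inequality $d(p\,\|\,q)\geq 2(p-q)^{2}$, which I would prove by fixing $q$ and studying $g(p):=d(p\,\|\,q)-2(p-q)^{2}$ on $[0,1]$. A direct computation gives $g(q)=0$ and $g'(q)=0$, together with
\[
g''(p)=\frac{1}{p}+\frac{1}{1-p}-4=\frac{1}{p(1-p)}-4\geq 0,
\]
where the last inequality holds because $p(1-p)\leq\tfrac14$. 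Thus $g$ is convex and attains its minimum value $0$ at $p=q$, so $g\geq 0$ on $[0,1]$, which is the desired inequality.

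Finally I would chain the estimates: using $\left|p-q\right|\geq\varepsilon/2$,
\[
H(\nu\,|\,\theta)=d(p\,\|\,q)\geq 2(p-q)^{2}\geq 2\Big(\frac{\varepsilon}{2}\Big)^{2}=\frac{\varepsilon^{2}}{2}\geq\frac{1}{8}\varepsilon^{2},
\]
which proves the lemma, with room to spare since the constant $1/8$ is not optimal. Equivalently, one may simply invoke Pinsker's inequality $H(\nu\,|\,\theta)\geq 2\left\Vert \nu-\theta\right\Vert _{\mathrm{TV}}^{2}$ together with the two-point identity $\left\Vert \nu-\theta\right\Vert _{\mathrm{TV}}=\tfrac12\left|\nu(I)-\theta(I)\right|$.

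The only real obstacle is the analytic inequality $d(p\,\|\,q)\geq 2(p-q)^{2}$; the convexity argument above handles it cleanly, the only points requiring care being that $\log$ denotes the natural logarithm, so that the constant $2$ is correct, and the boundary behaviour at $q\in\{0,1\}$, both of which are easily addressed.
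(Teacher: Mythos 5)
Your proposal is correct and follows essentially the same route as the paper: both reduce the problem to the one-parameter binary relative entropy and bound it below by a quadratic via a second-derivative estimate, the paper via Taylor's theorem with Lagrange remainder (yielding $f(t)\geq\frac{1}{2}(t-r)^{2}$, enough for the constant $\frac{1}{8}$), you via convexity of $d(p\,\|\,q)-2(p-q)^{2}$, which gives the sharp binary Pinsker constant $2$. Your explicit handling of the degenerate cases $q\in\{0,1\}$ is a small point the paper's Taylor argument silently glosses over, but the core mechanism is identical.
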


\begin{proof}
Set $r=\theta(\{-1\})$ and $t=\nu(\{-1\})$. Inequality \eqref{ptv1}
implies that 
\begin{align}
|-t+(1-t)-(-r+(1-r))| & =2|t-r|\geq\varepsilon.\label{ptv3}
\end{align}
Moreover, we have that 
\begin{align}
f(t) & :=H(\nu\,|\,\theta)=t\log\left(\frac{t}{r}\right)+(1-t)\log\left(\frac{1-t}{1-r}\right).\label{ptv4}
\end{align}
As stated above, the minimum of $H(\nu\,|\,\theta)$ as a function
of $\nu$ is $\nu=\theta$. It follows that $f$ attains its minimum
at $r$, and $f(r)=0$. A first order Taylor series around $r$ implies
that 
\begin{align}
f(t) & =\frac{1}{2}f''(\xi)(t-r)^{2}=\frac{1}{2}\left(\frac{1}{\xi}+\frac{1}{1-\xi}\right)(t-r)^{2}\geq\frac{1}{2}(t-r)^{2},\label{ptv5}
\end{align}
where $\xi$ lies between $r$ and $t$, and therefore $\xi\in(0,1)$.
Displays \eqref{ptv3}-\eqref{ptv5} imply the desired result.
\end{proof}
We need a very precise upper bound for the tail probabilities under
$\boldsymbol{\mu}_{i,j}$. Therefore, we will use results from the
proof of Sanov's Theorem to establish said upper bound instead of
taking the statement from the theorem itself. The result given in
the theorem is in terms of the $\limsup$ of a sequence in the number
of observations $n$, but we will use an upper bound to be found in
\eqref{Sanov} which allows us to establish precisely how large $n$
has to be in order to have an upper bound on the probability of atypical
observations.
\begin{prop}
\label{PropSanov} For every $n\in\mathbb{N}$ and every $\varepsilon\in(0,1)$,
we have the upper bound
\begin{align*}
\boldsymbol{\mu}_{i,j}\left(\mathcal{O}_{i,j}^{n}(\varepsilon)\right) & \leq(n+1)^{2}e^{-\frac{1}{8}\varepsilon^{2}n}.
\end{align*}
\end{prop}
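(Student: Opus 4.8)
The plan is to reduce the statement to a finite computation and then run the method of types that underlies the upper bound in Sanov's Theorem. Under $\boldsymbol{\mu}_{i,j}$ the variables $Y_{i,j}^{(1)},\ldots,Y_{i,j}^{(n)}$ are i.i.d.\ with common law $\mu_{i,j}$ on the two-point space $\Xi$, and the event $\mathcal{O}_{i,j}^{n}(\varepsilon)$ is determined by these $n$ variables alone, through the empirical measure $L_n^{Y_{i,j}}$. Thus the task is to bound the $\boldsymbol{\mu}_{i,j}$-probability that $L_n^{Y_{i,j}}$ lands in the closed set $\mathcal{O}_{i,j}(\varepsilon)$.

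First I would partition the event according to the value of the empirical measure (its \emph{type}). A type of a sample of length $n$ on $\Xi=\{-1,1\}$ is determined by the pair of nonnegative integer counts $\big(|\{s\in\IN_n:Y_{i,j}^{(s)}=-1\}|,\,|\{s\in\IN_n:Y_{i,j}^{(s)}=1\}|\big)$, each lying in $\{0,1,\ldots,n\}$, so the set $\mathcal{T}_n$ of types realisable by a sample of size $n$ satisfies $|\mathcal{T}_n|\leq(n+1)^2$. Decomposing over these types gives
\[
\boldsymbol{\mu}_{i,j}\left(\mathcal{O}_{i,j}^{n}(\varepsilon)\right)=\sum_{\nu\in\mathcal{T}_n\cap\mathcal{O}_{i,j}(\varepsilon)}\boldsymbol{\mu}_{i,j}\left(\left\{ L_n^{Y_{i,j}}=\nu\right\} \right).
\]

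The key step is the per-type bound $\boldsymbol{\mu}_{i,j}(\{L_n^{Y_{i,j}}=\nu\})\leq e^{-nH(\nu\,|\,\mu_{i,j})}$, valid for each $\nu\in\mathcal{T}_n$. I would establish it by the standard ratio argument, which avoids any Stirling estimate of the multinomial coefficient. The type class $\{L_n^{Y_{i,j}}=\nu\}$ has probability $\binom{n}{n\nu(\{-1\})}\prod_{r\in\Xi}\mu_{i,j}(\{r\})^{n\nu(\{r\})}$ under $\boldsymbol{\mu}_{i,j}$; dividing this by the corresponding probability computed with $\nu$ itself in place of $\mu_{i,j}$ cancels the binomial coefficient and leaves $\exp\big(n\sum_{r\in\Xi}\nu(\{r\})\log\frac{\mu_{i,j}(\{r\})}{\nu(\{r\})}\big)=e^{-nH(\nu\,|\,\mu_{i,j})}$, while the probability of the type class under its own law $\nu^{\otimes n}$ is at most $1$. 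Since every configuration has positive probability under the Ising measure, $\mu_{i,j}(\{r\})>0$ for both $r\in\Xi$, so no degeneracy arises on the $\mu_{i,j}$-side and the usual convention $0^0=1$ handles extreme types. For $\nu\in\mathcal{O}_{i,j}(\varepsilon)$ we have $|\nu(I)-\mu_{i,j}(I)|\geq\varepsilon$, so Lemma \ref{lemep} yields $H(\nu\,|\,\mu_{i,j})\geq\frac{1}{8}\varepsilon^2$. Inserting this into the per-type bound and summing over the at most $(n+1)^2$ admissible types gives exactly $\boldsymbol{\mu}_{i,j}(\mathcal{O}_{i,j}^{n}(\varepsilon))\leq(n+1)^2e^{-\frac{1}{8}\varepsilon^2 n}$.

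The reduction to i.i.d.\ coordinates and the counting of types are routine bookkeeping; the point requiring care is the per-type probability bound. The subtlety there is purely that one must restrict to types $\nu$ genuinely realisable by a sample of size $n$ (so that $n\nu(\{r\})$ is an integer and the type class is nonempty), which is precisely what the ratio argument exploits: comparing against $\nu^{\otimes n}$ delivers the constant-free estimate $e^{-nH(\nu\,|\,\mu_{i,j})}$ and lets the polynomial factor $(n+1)^2$ enter solely through the number of types.
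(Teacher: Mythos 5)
Your proposal is correct and follows essentially the same route as the paper: the paper invokes the finite-alphabet upper bound from the proof of Sanov's Theorem (Eq.\ (2.1.12) in Dembo--Zeitouni, itself proved by exactly the method-of-types argument you spell out, with the $(n+1)^{2}$ factor counting types on the two-point space $\Xi$) and then applies Lemma \ref{lemep} to bound the relative entropy below by $\frac{1}{8}\varepsilon^{2}$ on $\mathcal{O}_{i,j}(\varepsilon)$. The only difference is that you re-derive the cited per-type bound $e^{-nH(\nu\,|\,\mu_{i,j})}$ via the change-of-measure ratio argument instead of quoting it, which makes the proof self-contained but is mathematically the same argument.
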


\begin{proof}
The result follows by the proof of Sanov's Theorem \cite[Theorem 2.1.10]{DembZeit1998}.
We use Eq. (2.1.12) in \cite{DembZeit1998} which in our case reads
as the following equation (we denote by $P_{\boldsymbol{\mu}_{i,j}}$
the probability induced by $\boldsymbol{\mu}_{i,j}$): 
\begin{align}
P_{\boldsymbol{\mu}_{i,j}}\left(\left\{ L_{n}^{Y_{i,j}(\omega)}\in\mathcal{O}_{i,j}(\varepsilon)\right\} \right) & =\boldsymbol{\mu}_{i,j}\left(\mathcal{O}_{i,j}^{n}(\varepsilon)\right)\leq(n+1)^{2}e^{-n\inf_{\eta\in\mathcal{O}_{i,j}(\varepsilon)\cup\mathcal{L}_{n}}H(\eta\,|\,\boldsymbol{\mu}_{i,j})},\label{Sanov}
\end{align}
where $\mathcal{L}_{n}=\left\{ \eta\,|\,\eta=L_{n}^{y},\:\text{for some }n\textup{ and }y\right\} $,
see above Lemma 2.1.12 in \cite{DembZeit1998}. The result follows
by \eqref{Sanov}, the definition of $\mathcal{O}_{i,j}(\varepsilon)$
and Lemma \ref{lemep}, which implies 
\[
H(\eta|\boldsymbol{\mu}_{i,j})\geq\frac{1}{8}\varepsilon^{2},
\]
for every $\eta\in\mathcal{O}_{i,j}(\varepsilon).$
\end{proof}

\subsection{\label{subsec:High-Temp}High Temperature Group Identification}
\begin{lem}
\label{CauchyS}Let $A$ be a positive definite $M\times M$ matrix.
Then
\[
A_{l,m}<\max\left\{ A_{l,l},A_{m,m}\right\} ,\quad l,m\in\IN_{M},l\neq m
\]
holds.
\end{lem}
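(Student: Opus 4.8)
The plan is to exploit the defining property of positive definiteness by testing the quadratic form $\langle Ax,x\rangle$ against a cleverly chosen vector $x$ that isolates the off-diagonal entry $A_{l,m}$ and relates it to the diagonal entries $A_{l,l}$ and $A_{m,m}$. The natural candidates are the vectors supported on the two coordinates $l$ and $m$, since these reduce the $M\times M$ problem to a $2\times 2$ problem governed by the principal submatrix $\begin{pmatrix}A_{l,l}&A_{l,m}\\A_{l,m}&A_{m,m}\end{pmatrix}$, which is itself positive definite (a principal submatrix of a positive definite matrix is positive definite).

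First I would reduce to this $2\times2$ submatrix and record the two consequences of positive definiteness that I need: the diagonal entries $A_{l,l},A_{m,m}$ are strictly positive, and the determinant condition $A_{l,l}A_{m,m}>A_{l,m}^2$ holds. From the determinant inequality I immediately get $|A_{l,m}|<\sqrt{A_{l,l}A_{m,m}}$. The remaining task is to bound this geometric-mean expression by $\max\{A_{l,l},A_{m,m}\}$: since both diagonal entries are positive, $\sqrt{A_{l,l}A_{m,m}}\leq\max\{A_{l,l},A_{m,m}\}$ because the geometric mean never exceeds the maximum. Chaining these gives $A_{l,m}\leq|A_{l,m}|<\sqrt{A_{l,l}A_{m,m}}\leq\max\{A_{l,l},A_{m,m\}}$, which is exactly the claim.

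Alternatively, and perhaps more transparently, I would test the quadratic form directly on $x=e_l-e_m$ (the difference of standard basis vectors), which yields $\langle Ax,x\rangle=A_{l,l}-2A_{l,m}+A_{m,m}>0$, hence $A_{l,m}<\tfrac{1}{2}(A_{l,l}+A_{m,m})\leq\max\{A_{l,l},A_{m,m}\}$, using that the average of two reals is at most their maximum. This second route is cleaner since it avoids the determinant computation and delivers the inequality in one step; the strictness comes for free from the strict positivity of the quadratic form on the nonzero vector $e_l-e_m$.

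I do not anticipate a genuine obstacle here, as the statement is elementary. The only point requiring a modicum of care is ensuring strict inequality rather than $\leq$: this is guaranteed because $A>0$ means the quadratic form is strictly positive on every nonzero vector, and $e_l-e_m\neq 0$ since $l\neq m$. I would present the proof via the test vector $e_l-e_m$ as the primary argument for its brevity.
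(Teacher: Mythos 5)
Your proof is correct, and your primary argument takes a genuinely different and more elementary route than the paper's. The paper writes $A_{l,m}=\langle A^{1/2}e_{l},A^{1/2}e_{m}\rangle$ and applies the strict Cauchy--Schwarz inequality (strictness from the linear independence of $A^{1/2}e_{l}$ and $A^{1/2}e_{m}$, which follows from the invertibility of the positive definite square root $A^{1/2}$), yielding the geometric-mean bound $A_{l,m}<\sqrt{A_{l,l}A_{m,m}}\leq\max\{A_{l,l},A_{m,m}\}$. Your primary route instead tests the quadratic form on the single vector $e_{l}-e_{m}$, giving the arithmetic-mean bound $A_{l,m}<\tfrac{1}{2}\left(A_{l,l}+A_{m,m}\right)\leq\max\{A_{l,l},A_{m,m}\}$; this avoids the spectral theorem and the existence of a square root entirely, using nothing beyond the definition of positive definiteness, and as a bonus the step $\tfrac{1}{2}(a+b)\leq\max\{a,b\}$ holds for arbitrary reals, so you do not even need positivity of the diagonal entries there. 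The trade-off is that your intermediate bound is weaker (the arithmetic mean dominates the geometric mean), but this is immaterial since only the comparison with the maximum is used downstream, in the definition of $\boldsymbol{\delta}$. Your alternative route via the $2\times2$ principal submatrix and its determinant recovers exactly the paper's geometric-mean inequality by different means, so it is essentially the paper's argument in determinant clothing. Your attention to strictness is also correct: $e_{l}-e_{m}\neq0$ for $l\neq m$ makes the quadratic form strictly positive, which is precisely where the paper instead invokes non-collinearity in Cauchy--Schwarz.
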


\begin{proof}
Let $e_{l}$, $l\in\IN_{M}$, stand for the $l$-th vector of the
canonical basis of $\IR^{M}$. Since $A$ is positive definite, it
has a square root $A^{1/2}$ which is itself positive definite. We
have for all $l\neq m$
\begin{align*}
\langle Ae_{l},e_{m}\rangle & =\langle A^{1/2}e_{l},A^{1/2}e_{m}\rangle<\left\Vert A^{1/2}e_{l}\right\Vert \left\Vert A^{1/2}e_{m}\right\Vert \\
 & =\langle Ae_{l},e_{l}\rangle^{1/2}\langle Ae_{m},e_{m}\rangle^{1/2}\leq\left[\max\left\{ \langle Ae_{l},e_{l}\rangle^{1/2},\langle Ae_{m},e_{m}\rangle^{1/2}\right\} \right]^{2}.
\end{align*}
The strict inequality above follows from the Cauchy-Schwarz inequality,
since the invertibility of $A^{1/2}$ implies $A^{1/2}e_{l}$ and
$A^{1/2}e_{m}$ are not linearly dependent, and thus the inequality
is strict. As $\langle Ae_{l},e_{m}\rangle=A_{l,m}$, we conclude
that 
\begin{align*}
A_{l,m} & <\max\left\{ A_{l,l},A_{m,m}\right\} .
\end{align*}
\end{proof}
Recall that $H:=J^{-1}-I>0$ holds in the high temperature regime.
The previous Lemma implies
\begin{cor}
\label{cor:CS_H}We have for all $l,m\in\IN_{M}$ with $l\neq m$
\[
H_{l,m}^{-1}<\max\left\{ H_{l,l}^{-1},H_{m,m}^{-1}\right\} .
\]
\end{cor}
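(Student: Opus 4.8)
The plan is to deduce this corollary directly from Lemma \ref{CauchyS} by applying it to the matrix $A := H^{-1}$. The entire content of the statement is the inequality $A_{l,m} < \max\{A_{l,l}, A_{m,m}\}$ for the specific choice $A = H^{-1}$, so once I verify that $H^{-1}$ satisfies the hypothesis of the lemma, the conclusion is immediate.

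First I would check that $H^{-1}$ is symmetric positive definite. In the high temperature regime we have $H = J^{-1} - I > 0$ by the definition $H := J^{-1} - I$ together with the hypothesis $I - J > 0$; indeed, since $J > 0$ one has $J^{-1} > 0$, and a short argument (for instance, simultaneous diagonalisation, or noting that the eigenvalues of $J^{-1}$ exceed $1$ precisely when $I - J > 0$) shows $J^{-1} - I > 0$. The one standard fact I would invoke is that the inverse of a symmetric positive definite matrix is again symmetric positive definite: if $H > 0$ then for any $x \neq 0$ we may write $x = H y$ with $y \neq 0$, whence $\langle H^{-1} x, x\rangle = \langle y, H y\rangle > 0$, and symmetry of $H^{-1}$ follows from symmetry of $H$.

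Having established $H^{-1} > 0$, I would apply Lemma \ref{CauchyS} with $A = H^{-1}$ to obtain, for all $l \neq m$,
\[
\left(H^{-1}\right)_{l,m} < \max\left\{\left(H^{-1}\right)_{l,l}, \left(H^{-1}\right)_{m,m}\right\},
\]
which is exactly the claimed inequality after rewriting $\left(H^{-1}\right)_{l,m} = H^{-1}_{l,m}$ in the notation of the statement. There is no real obstacle here: the only substantive point is the positive definiteness of $H^{-1}$, and even that reduces to the positive definiteness of $H$ already recorded in the high temperature regime plus the elementary closure of positive definiteness under matrix inversion. The corollary is therefore an essentially one-line consequence of the preceding lemma.
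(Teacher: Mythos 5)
Your proposal is correct and matches the paper's own route exactly: the paper likewise notes that $H:=J^{-1}-I>0$ in the high temperature regime and deduces the corollary by applying Lemma \ref{CauchyS} to the positive definite matrix $H^{-1}$. Your explicit verification that $I-J>0$ forces $J^{-1}-I>0$ and that positive definiteness is preserved under inversion simply fills in details the paper leaves implicit.
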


Using this corollary, we define the constants $\boldsymbol{\delta}>0$
and $N_{\textup{high}}\in\IN$, which are used in Theorem \ref{HighIdent}.
\begin{defn}
\label{DefinitionNd}Let
\begin{align*}
\eta & :=\min_{l\ne m}\left\{ \max\left\{ H_{l.l}^{-1},H_{m,m}^{-1}\right\} -H_{l,m}^{-1}\right\} ,\\
\xi & :=\min_{l\ne m}\left\{ \left.\left|H_{l.l}^{-1}-H_{m,m}^{-1}\right|\;\right|\;H_{l.l}^{-1}\neq H_{m,m}^{-1}\right\} .
\end{align*}
We define
\begin{align}
\boldsymbol{\delta} & :=\min\left\{ \eta,\xi\right\} ,\label{bdelta}
\end{align}
which is a strictly positive constant by Corollary \ref{cor:CS_H},
and 
\begin{equation}
\mathcal{O}^{n}:=\bigcup_{i\ne j}\mathcal{O}_{i,j}^{n}\left(\frac{1}{8N}\boldsymbol{\delta}\right).\label{eq:O^n}
\end{equation}

Moreover, we choose a fixed natural number $N_{{\rm high}}$ satisfying
(cf. Theorem \ref{thm:high_temp}) 
\begin{align}
\boldsymbol{C}_{\textup{high}}\frac{(\ln N)^{(6+M)/2}}{N^{2}} & \leq\frac{1}{8}\frac{1}{N}\boldsymbol{\delta},\label{pp1}
\end{align}
for every $N\geq N_{{\rm high}}$. 
\end{defn}

\begin{rem}
\label{rem:union_bound}Notice that Proposition \ref{PropSanov} implies
\[
\mathbb{P}\left(\left\{ \omega\in\mathcal{O}^{n}\right\} \right)\leq\sum_{i\neq j}\boldsymbol{\mu}_{i,j}\left(\mathcal{O}_{i,j}^{n}\left(\frac{1}{8N}\boldsymbol{\delta}\right)\right)\leq N^{2}(n+1)^{2}e^{-\frac{1}{8}\left(\frac{1}{8N}\boldsymbol{\delta}\right)^{2}n}.
\]
\end{rem}

In the following, we use the standard notation in which a random sample
(or an observation) is denoted by lowercase letters corresponding
to the random variables represented by uppercase letters. More precisely,
for a $\omega\in\Omega$, we denote by 
\begin{align}
x^{(t)} & =X^{(t)}(\omega)=\omega^{(t)}\in\Omega_{1}=\left\{ -1,1\right\} ^{N}.\label{stat}
\end{align}
We fix $\omega\notin\mathcal{O}^{n}$ for the rest of this section.
Given $\omega$, we obtain a sample $x^{(1)},x^{(2)},\ldots,x^{(n)}$
and calculate the empirical correlations 
\begin{equation}
\bar{y}_{i,j}:=\bar{y}_{i,j}(\omega):=\frac{1}{n}\sum_{t=1}^{n}x_{i}^{(t)}x_{j}^{(t)}\label{eq:emp_corr}
\end{equation}
for all $i,j\in\IN_{N}$ with $i\neq j$.
\begin{lem}
\label{High}Assume that $N\geq N_{{\rm high}}$. Let $n\in\mathbb{N}$.
Then we have 
\begin{align}
\left|\bar{y}_{i,j}-\frac{H_{\iota(i),\iota(j)}^{-1}}{N}\right| & \leq\frac{1}{4}\frac{1}{N}\boldsymbol{\delta}\quad i,j\in\IN_{N},i\neq j.\label{yaa6}
\end{align}
\end{lem}

\begin{proof}
Recall $L_{n}^{Y_{i,j}(\omega)}(I)=\bar{y}_{i,j}$ and $\mu_{i,j}(I)=\mathbb{E}\left(X_{\iota(i)}X_{\iota(j)}\right)$.
It follows that

\begin{align*}
\left|\bar{y}_{i,j}-\mathbb{E}\left(X_{\iota(i)}X_{\iota(j)}\right)\right|=\left|L_{n}^{Y_{i,j}(\omega)}(I)-\mu_{i,j}(I)\right| & \leq\frac{1}{8}\frac{1}{N}\boldsymbol{\delta}.
\end{align*}

Theorem \ref{thm:high_temp} and \eqref{pp1} imply
\begin{align*}
\left|\mathbb{E}\left(X_{\iota(i)}X_{\iota(j)}\right)-\frac{H_{\iota(i),\iota(j)}^{-1}}{N}\right| & \leq\frac{1}{8}\frac{1}{N}\boldsymbol{\delta},
\end{align*}
so we obtain 
\begin{align*}
\left|\bar{y}_{i,j}-\frac{H_{\iota(i),\iota(j)}^{-1}}{N}\right| & \leq\frac{1}{4}\frac{1}{N}\boldsymbol{\delta}.
\end{align*}
\end{proof}

Let $h_{(1)},h_{(2)},\ldots,h_{(q)}$ be an enumeration of the set
$\left\{ H_{l,l}^{-1}\right\} _{l\in\IN_{M}}$ with
\begin{align*}
h_{(1)} & >h_{(2)}>\ldots>h_{(q)}.
\end{align*}
We define a partition of the set of all groups $\IN_{M}$ based on
the values $h_{(u)}$ above.
\begin{defn}
\label{def:P_u}Let $P_{1},\ldots,P_{q}$ be a partition of $\IN_{M}$
induced by
\[
l\in P_{u}\quad\iff\quad H_{l,l}^{-1}=h_{(u)},\quad u\in\IN_{q}.
\]
\end{defn}

Next we recursively define two set sequences which will be employed
in the reconstruction algorithm of the group structure of the model.
\begin{defn}
\label{def:G_A}Let $A:=\left\{ \left(i,j\right)\in\IN_{N}\times\IN_{N}\,|\,i\neq j\right\} $.
We define
\begin{align*}
G_{1} & :=\left\{ (i,j)\in A\;\left|\;\left|\bar{y}_{i,j}-\frac{h_{(1)}}{N}\right|\leq\frac{1}{4N}\boldsymbol{\delta}\right.\right\} 
\end{align*}
and
\begin{align*}
A_{1} & :=\Big\{(i,j)\in A\;\left|\;\exists r\text{ such that }(i,r)\in G_{1}\;\textup{or}\;(r,j)\in G_{1}\right.\Big\}.
\end{align*}
Assume $G_{1},\ldots,G_{u}$ and $A_{1},\ldots A_{u}$ have been defined
for some $1\leq u<q$. Set
\begin{align*}
G_{u+1} & :=\left\{ (i,j)\in A\setminus A_{u}\;\left|\;\left|\bar{y}_{i,j}-\frac{h_{(u+1)}}{N}\right|\leq\frac{1}{4N}\boldsymbol{\delta}\right.\right\} 
\end{align*}
and
\begin{align*}
A_{u+1} & :=\left\{ \left.(i,j)\in A\;\right|\;\exists v\leq u+1,\exists r\text{ such that }(i,r)\in G_{v}\;\textup{or}\;(r,j)\in G_{v}\right\} .
\end{align*}
\end{defn}

\begin{rem}
It is a direct consequence of Definition \ref{def:G_A} that
\begin{align*}
G_{u}\cap G_{v} & =\emptyset,\quad u,v\in\IN_{q},u\neq v,\\
A_{u} & \subset A_{u+1},\quad u\in\IN_{q-1}.
\end{align*}
\end{rem}

In what follows, we will state and prove a series of lemmas, which
taken together will allow us to construct an equivalence relation
on $\IN_{M}$ based only on information contained in the sample of
observations, and more specifically the empirical correlations $\bar{y}_{i,j}$.
This equivalence relation corresponds to the partition into the sets
$\iota^{-1}(l)$, $l\in\IN_{M}$, and thus constitutes the solution
to the community detection problem stated in Theorem \ref{HighIdent}.
\begin{lem}
\label{lem:induction_base}Let $(i,j)\in A$. Then we have
\[
(i,j)\in G_{1}\quad\iff\quad\iota(i)=\iota(j)\in P_{1}.
\]
\end{lem}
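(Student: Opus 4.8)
The plan is to prove both implications directly, using Lemma~\ref{High} to control the empirical correlations $\bar{y}_{i,j}$ and the separation constant $\boldsymbol{\delta}$ to keep the clusters around distinct values $h_{(u)}/N$ disjoint. Throughout I fix $\omega\notin\mathcal{O}^{n}$ and $N\geq N_{\textup{high}}$, so that the estimate \eqref{yaa6} is available for every pair.

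\textbf{Forward direction.} Suppose $(i,j)\in G_{1}$, i.e.\ $\left|\bar{y}_{i,j}-\frac{h_{(1)}}{N}\right|\leq\frac{1}{4N}\boldsymbol{\delta}$. By Lemma~\ref{High}, $\bar{y}_{i,j}$ lies within $\frac{1}{4N}\boldsymbol{\delta}$ of $\frac{H_{\iota(i),\iota(j)}^{-1}}{N}$, so by the triangle inequality
\begin{align*}
\left|\frac{H_{\iota(i),\iota(j)}^{-1}}{N}-\frac{h_{(1)}}{N}\right| & \leq\frac{1}{2N}\boldsymbol{\delta},
\end{align*}
that is $\left|H_{\iota(i),\iota(j)}^{-1}-h_{(1)}\right|\leq\frac{1}{2}\boldsymbol{\delta}$. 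The point is now to rule out every value that $H_{\iota(i),\iota(j)}^{-1}$ could take other than $h_{(1)}$ with $\iota(i)=\iota(j)$. If $\iota(i)\neq\iota(j)$, then Corollary~\ref{cor:CS_H} gives $H_{\iota(i),\iota(j)}^{-1}<\max\{H_{\iota(i),\iota(i)}^{-1},H_{\iota(j),\iota(j)}^{-1}\}\leq h_{(1)}$, and the definition of $\eta$ (hence of $\boldsymbol{\delta}$) forces $h_{(1)}-H_{\iota(i),\iota(j)}^{-1}\geq\eta\geq\boldsymbol{\delta}>\frac{1}{2}\boldsymbol{\delta}$, a contradiction. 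If instead $\iota(i)=\iota(j)$ but $\iota(i)\notin P_{1}$, then $H_{\iota(i),\iota(j)}^{-1}=H_{\iota(i),\iota(i)}^{-1}=h_{(u)}$ for some $u\geq 2$, and the definition of $\xi$ gives $\left|h_{(1)}-h_{(u)}\right|\geq\xi\geq\boldsymbol{\delta}>\frac{1}{2}\boldsymbol{\delta}$, again a contradiction. Hence $\iota(i)=\iota(j)\in P_{1}$.

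\textbf{Reverse direction.} Conversely, suppose $\iota(i)=\iota(j)\in P_{1}$. Then $H_{\iota(i),\iota(j)}^{-1}=H_{\iota(i),\iota(i)}^{-1}=h_{(1)}$, so Lemma~\ref{High} yields directly
\begin{align*}
\left|\bar{y}_{i,j}-\frac{h_{(1)}}{N}\right| & =\left|\bar{y}_{i,j}-\frac{H_{\iota(i),\iota(j)}^{-1}}{N}\right|\leq\frac{1}{4N}\boldsymbol{\delta},
\end{align*}
which is exactly the membership condition for $G_{1}$. This completes the equivalence.

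The only real obstacle is bookkeeping the separation: the half-width $\frac{1}{4N}\boldsymbol{\delta}$ of the cluster defining $G_{1}$, doubled through the triangle inequality to $\frac{1}{2N}\boldsymbol{\delta}$, must stay strictly below the gap $\frac{\eta}{N}$ to cross-group values and the gap $\frac{\xi}{N}$ to other same-group values. Since $\boldsymbol{\delta}=\min\{\eta,\xi\}$ by \eqref{bdelta}, both gaps are at least $\frac{\boldsymbol{\delta}}{N}$, comfortably exceeding $\frac{1}{2N}\boldsymbol{\delta}$, so the argument closes. I expect the verification that these two cases ($\iota(i)\neq\iota(j)$ and $\iota(i)=\iota(j)\notin P_{1}$) exhaust all ways membership could fail to be the step requiring the most care, as it is precisely where Corollary~\ref{cor:CS_H} and the two-part definition of $\boldsymbol{\delta}$ are invoked in tandem.
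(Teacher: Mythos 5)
Your proof is correct and follows essentially the same route as the paper's: combine the defining inequality of $G_{1}$ with Lemma~\ref{High} via the triangle inequality to get $\left|H_{\iota(i),\iota(j)}^{-1}-h_{(1)}\right|\leq\frac{1}{2}\boldsymbol{\delta}$, then rule out the cases $\iota(i)\neq\iota(j)$ and $\iota(i)=\iota(j)\notin P_{1}$ using Corollary~\ref{cor:CS_H} together with the $\eta$- and $\xi$-parts of Definition~\ref{DefinitionNd}. Your reverse direction, which the paper dismisses as ``analogous,'' is in fact the correct direct argument ($H_{\iota(i),\iota(j)}^{-1}=h_{(1)}$ plus Lemma~\ref{High} immediately gives membership in $G_{1}$), so nothing is missing.
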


\begin{proof}
Recall that we fixed $\omega\notin\mathcal{O}^{n}$ and hence $\bar{y}_{i,j}$
is fixed as well. Suppose $(i,j)\in G_{1}$. By Definition \ref{def:G_A},
\[
\left|\bar{y}_{i,j}-\frac{h_{(1)}}{N}\right|\leq\frac{1}{4N}\boldsymbol{\delta}
\]
holds. Lemma \ref{High} supplies the upper bound
\[
\left|\bar{y}_{i,j}-\frac{H_{\iota(i),\iota(j)}^{-1}}{N}\right|\leq\frac{1}{4N}\boldsymbol{\delta}.
\]
The two inequalities yield
\begin{equation}
\left|\frac{H_{\iota(i),\iota(j)}^{-1}}{N}-\frac{h_{(1)}}{N}\right|\leq\frac{1}{2N}\boldsymbol{\delta}.\label{eq:Hh}
\end{equation}
Note that $H_{k,l}^{-1}=h_{(1)}$ holds if and only if $k=l\in P_{1}$.
By Definition \ref{DefinitionNd},
\[
H_{k,l}^{-1}\leq h_{(1)}-\boldsymbol{\delta},\quad k\neq l,
\]
and
\[
H_{k,k}^{-1}\leq h_{(1)}-\boldsymbol{\delta},\quad k\notin P_{1}.
\]
The above and inequality \eqref{eq:Hh} give
\[
\iota(i)=\iota(j)\in P_{1}.
\]
Thus, we have shown that $(i,j)\in G_{1}$ implies $\iota(i)=\iota(j)\in P_{1}$.
The converse is proved analogously.
\end{proof}
\begin{lem}
\label{lem:step_RL}Let $u\in\IN_{q}$. Suppose we have
\[
(i,j)\in G_{v}\quad\iff\quad\iota(i)=\iota(j)\in P_{v}
\]
for all $(i,j)\in A$ and all $1\leq v<u$. Then
\[
\iota(i)=\iota(j)\in P_{u}\quad\implies\quad(i,j)\in G_{u}
\]
holds for all $(i,j)\in A$.
\end{lem}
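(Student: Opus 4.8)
The plan is to prove the contrapositive is unnecessary; instead I would argue directly by combining the inductive hypothesis with Lemma \ref{High} and Definition \ref{def:G_A}. Suppose $\iota(i)=\iota(j)\in P_u$. By the definition of the partition (Definition \ref{def:P_u}), this means $H_{\iota(i),\iota(j)}^{-1}=h_{(u)}$. The membership condition in $G_u$ has two parts: the pair must satisfy the correlation bound $|\bar{y}_{i,j}-h_{(u)}/N|\leq\frac{1}{4N}\boldsymbol{\delta}$, and the pair must lie in $A\setminus A_{u-1}$ (where $A_0:=\emptyset$). So there are exactly two things to verify.

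For the correlation bound, I would invoke Lemma \ref{High}, which gives $|\bar{y}_{i,j}-H_{\iota(i),\iota(j)}^{-1}/N|\leq\frac{1}{4N}\boldsymbol{\delta}$. Since $H_{\iota(i),\iota(j)}^{-1}=h_{(u)}$ by assumption, this is precisely the required estimate $|\bar{y}_{i,j}-h_{(u)}/N|\leq\frac{1}{4N}\boldsymbol{\delta}$. This step is immediate and is the easy half.

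The harder and more delicate step is showing $(i,j)\notin A_{u-1}$, i.e. that the pair has not already been absorbed in an earlier stage of the algorithm. By the recursive definition of $A_{u-1}$, membership would require the existence of some $v<u$ and some index $r$ with either $(i,r)\in G_v$ or $(r,j)\in G_v$. Here is where I would use the inductive hypothesis in the stated equivalence form: if $(i,r)\in G_v$ then the hypothesis forces $\iota(i)=\iota(r)\in P_v$, so in particular $\iota(i)\in P_v$; symmetrically $(r,j)\in G_v$ forces $\iota(j)\in P_v$. But we assumed $\iota(i)=\iota(j)\in P_u$, and the sets $P_1,\ldots,P_q$ are disjoint with $P_v\cap P_u=\emptyset$ for $v\neq u$. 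Thus $\iota(i)\in P_v\cap P_u=\emptyset$ (or likewise for $j$), a contradiction. Hence no such $v$ and $r$ exist, so $(i,j)\notin A_{u-1}$.

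The main obstacle I anticipate is purely bookkeeping: ensuring the base case $u=1$ is handled (there $A_0=\emptyset$, so the $A\setminus A_{u-1}$ condition is vacuous and only the correlation bound matters, which is exactly Lemma \ref{lem:induction_base}), and being careful that the inductive hypothesis is invoked in the correct direction — I need the forward implication $(i,r)\in G_v\implies\iota(i)\in P_v$, which is the content of the assumed equivalence. Combining the two verified facts, $(i,j)$ satisfies both defining conditions of $G_u$, so $(i,j)\in G_u$, completing the proof.
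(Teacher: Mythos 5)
Your proof is correct and takes essentially the same approach as the paper's: you verify the correlation bound via Lemma \ref{High} together with $H_{\iota(i),\iota(j)}^{-1}=h_{(u)}$, and you rule out $(i,j)\in A_{u-1}$ by a contradiction in which the inductive hypothesis forces $\iota(i)\in P_v$ (or $\iota(j)\in P_v$) for some $v<u$, violating the disjointness $P_v\cap P_u=\emptyset$. The only cosmetic differences are that you treat both cases $(i,r)\in G_v$ and $(r,j)\in G_v$ explicitly where the paper argues without loss of generality, and you add the harmless bookkeeping convention $A_0:=\emptyset$ for $u=1$.
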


\begin{proof}
Let $\iota(i)=\iota(j)\in P_{u}$. To obtain a contradiction, suppose
$\left(i,j\right)\in A_{u-1}$. Then there is some $r\neq i$ such
that $\left(i,r\right)\in G_{v}$ or $\left(i,r\right)\in G_{v}$
for some $v<u$. Without loss of generality, suppose $\left(i,r\right)\in G_{v}$.
By assumption, this implies $\iota(i)=\iota(r)\in P_{v}$. However,
this contradicts $\iota(i)\in P_{u}$ because of $P_{v}\cap P_{u}=\emptyset$.
Therefore, $\left(i,j\right)\notin A_{u-1}$ must hold. Since $\omega\notin\mathcal{O}^{n}$,
Lemma \ref{High} yields
\[
\left|\bar{y}_{i,j}-\frac{h_{(u)}}{N}\right|\leq\frac{1}{4N}\boldsymbol{\delta},
\]
and thus $(i,j)\in G_{u}$ holds by definition of $G_{u}$:
\[
G_{u}:=\left\{ (i,j)\in A\setminus A_{u-1}\;\left|\;\left|\bar{y}_{i,j}-\frac{h_{(u)}}{N}\right|\leq\frac{1}{4N}\boldsymbol{\delta}\right.\right\} .
\]
\end{proof}
\begin{lem}
Let $u\in\IN_{q}$. Suppose we have
\[
(i,j)\in G_{v}\quad\iff\quad\iota(i)=\iota(j)\in P_{v}
\]
for all $(i,j)\in A$ and all $1\leq v<u$. Then
\[
(i,j)\in G_{u}\quad\implies\quad\iota(i),\iota(j)\notin P_{v},\quad v<u.
\]
\end{lem}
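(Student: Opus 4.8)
The plan is to argue by contradiction, exploiting the definition of $G_u$ as a subset of $A\setminus A_{u-1}$ together with the standing inductive hypothesis. First I would record the key consequence of the membership $(i,j)\in G_u$: by the definition of $G_u$ we have $(i,j)\in A\setminus A_{u-1}$, and hence $(i,j)\notin A_{u-1}$. Unwinding the definition of $A_{u-1}$, this says that there is no index $v\leq u-1$ and no index $r$ with $(i,r)\in G_v$ or $(r,j)\in G_v$.

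Next I would suppose, toward a contradiction, that $\iota(i)\in P_v$ for some $v<u$; the case $\iota(j)\in P_v$ is entirely symmetric. The crucial observation is that, by Remark \ref{rem:alpha}, for $N$ large enough the group $\iota(i)$ contains at least two distinct individuals, so I may choose an index $r\neq i$ with $\iota(r)=\iota(i)$. Then $\iota(i)=\iota(r)\in P_v$, and applying the inductive hypothesis in the direction $\iota(i)=\iota(r)\in P_v\implies(i,r)\in G_v$ yields $(i,r)\in G_v$ with $v\leq u-1$.

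This immediately contradicts the consequence recorded in the first step, since the pair $(i,r)\in G_v$ witnesses $(i,j)\in A_{u-1}$. Therefore no such $v$ can exist, so $\iota(i)\notin P_v$ for every $v<u$, and the symmetric argument (choosing a same-group partner $r\neq j$ and using $(r,j)\in G_v$) gives $\iota(j)\notin P_v$ for every $v<u$, which is the claim.

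I do not anticipate a genuine obstacle: the statement follows almost mechanically from the definitions once one notices that a same-group partner $r$ for $i$ (respectively $j$) is available. The only step requiring care is the existence of this partner, which is precisely why the standing assumption from Remark \ref{rem:alpha} that each group consists of at least two members for large $N$ is needed; a singleton group with $\iota(i)\in P_v$ could otherwise fail to produce the required witnessing pair and the argument would break down.
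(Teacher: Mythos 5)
Your proof is correct and follows essentially the same route as the paper: both arguments unwind $(i,j)\in G_{u}\subset A\setminus A_{u-1}$ to rule out any witnessing pair $(i,r)$ or $(r,j)$ in some $G_{v}$ with $v<u$, then derive a contradiction by using Remark \ref{rem:alpha} to produce a same-group partner $r$ and the inductive hypothesis to place $(i,r)$ (resp.\ $(r,j)$) in $G_{v}$. Your closing observation about why the at-least-two-members assumption is indispensable is exactly the point the paper's proof relies on.
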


\begin{proof}
Assume $(i,j)\in G_{u}$. By Definition \ref{def:G_A}, there is no
$r\in\IN_{N}$ such that
\begin{equation}
\left(i,r\right)\in G_{v}\quad\textup{or}\quad\left(r,j\right)\in G_{v}\label{eq:G_v}
\end{equation}
for some $v<u$, as the latter would imply $\left(i,j\right)\in A_{v}$
and thus $\left(i,j\right)\notin G_{u}$ in contradiction to our assumption.

To obtain a contradiction, assume that $\iota(i)\in P_{v}$ for some
$v<u$. Since each group has at least two individuals (cf. Remark
\ref{rem:alpha}), there is an $r\neq i$ with $\iota(r)=\iota(i)\in P_{v}$.
By assumption, this implies $(i,r)\in G_{v}$ as $v<u$. This contradicts
\eqref{eq:G_v}, and hence we conclude that $\iota(i)\notin P_{v}$.
$\iota(j)\notin P_{v}$ follows by the same arguments.
\end{proof}
\begin{lem}
\label{lem:step_LR}Let $u\in\IN_{q}$. Suppose we have
\[
(i,j)\in G_{v}\quad\iff\quad\iota(i)=\iota(j)\in P_{v}
\]
for all $(i,j)\in A$ and all $1\leq v<u$. Then
\[
(i,j)\in G_{u}\quad\implies\quad\iota(i)=\iota(j)\in P_{u}
\]
holds for all $(i,j)\in A$.
\end{lem}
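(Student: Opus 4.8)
The plan is to combine the inductive hypothesis with the two separation constants $\eta$ and $\xi$ that are bundled into $\boldsymbol{\delta}$. Assume $(i,j)\in G_u$. First I would invoke the preceding lemma (which carries the same inductive hypothesis) to conclude that $\iota(i),\iota(j)\notin P_v$ for every $v<u$. Since $h_{(1)}>\cdots>h_{(q)}$ enumerates the diagonal values $\left\{ H_{l,l}^{-1}\right\} _{l\in\IN_{M}}$ and, by Definition \ref{def:P_u}, $P_v$ collects exactly those groups with $H_{l,l}^{-1}=h_{(v)}$, this exclusion forces both diagonal entries to lie at or below $h_{(u)}$, i.e. $H_{\iota(i),\iota(i)}^{-1}\le h_{(u)}$ and $H_{\iota(j),\iota(j)}^{-1}\le h_{(u)}$.

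Next I would pin down the off-diagonal entry $H_{\iota(i),\iota(j)}^{-1}$. By the definition of $G_u$ in Definition \ref{def:G_A} we have $\left|\bar{y}_{i,j}-h_{(u)}/N\right|\le\boldsymbol{\delta}/(4N)$, while Lemma \ref{High} gives $\left|\bar{y}_{i,j}-H_{\iota(i),\iota(j)}^{-1}/N\right|\le\boldsymbol{\delta}/(4N)$. A triangle inequality then yields
\[
\left|H_{\iota(i),\iota(j)}^{-1}-h_{(u)}\right|\le\tfrac{1}{2}\boldsymbol{\delta}.
\]

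The two remaining assertions --- that $\iota(i)=\iota(j)$ and that this common group lies in $P_u$ --- follow by contradiction, using $\eta$ and $\xi$ respectively. If $\iota(i)\ne\iota(j)$, the definition of $\eta$ applied to the distinct pair $\iota(i),\iota(j)$, together with the bound $\max\left\{ H_{\iota(i),\iota(i)}^{-1},H_{\iota(j),\iota(j)}^{-1}\right\} \le h_{(u)}$, forces $H_{\iota(i),\iota(j)}^{-1}\le h_{(u)}-\eta\le h_{(u)}-\boldsymbol{\delta}$, contradicting the displayed bound since $\boldsymbol{\delta}>\tfrac{1}{2}\boldsymbol{\delta}$. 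Hence $\iota(i)=\iota(j)$, so $H_{\iota(i),\iota(j)}^{-1}=H_{\iota(i),\iota(i)}^{-1}\le h_{(u)}$. If this value were strictly below $h_{(u)}$ it would equal some $h_{(w)}$ with $w>u$, whence the definition of $\xi$ gives $h_{(u)}-H_{\iota(i),\iota(i)}^{-1}\ge\xi\ge\boldsymbol{\delta}$, again contradicting the bound. Therefore $H_{\iota(i),\iota(i)}^{-1}=h_{(u)}$, that is $\iota(i)=\iota(j)\in P_u$.

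The only delicate point is the bookkeeping with the two constants: $\eta$ controls the drop from the larger diagonal entry to the off-diagonal entry and is what rules out $\iota(i)\ne\iota(j)$, whereas $\xi$ controls the gap between distinct diagonal values and is what upgrades ``$\le h_{(u)}$'' to ``$=h_{(u)}$''. Taking $\boldsymbol{\delta}=\min\left\{ \eta,\xi\right\} $ makes both separations at least $\boldsymbol{\delta}$, which strictly exceeds the $\tfrac{1}{2}\boldsymbol{\delta}$ slack produced by the two applications of the $\tfrac{1}{4N}\boldsymbol{\delta}$ estimates; this is precisely why the margin closes, and where the choice of $\boldsymbol{\delta}$ in Definition \ref{DefinitionNd} earns its keep.
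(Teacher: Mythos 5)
Your proof is correct and takes essentially the same approach as the paper's: exclude $P_{v}$ for $v<u$ via the preceding lemma, combine the defining bound of $G_{u}$ with Lemma~\ref{High} to obtain $\left|H_{\iota(i),\iota(j)}^{-1}-h_{(u)}\right|\leq\frac{1}{2}\boldsymbol{\delta}$, and then rule out $\iota(i)\neq\iota(j)$ and $\iota(i)=\iota(j)\in P_{w}$, $w>u$, by contradiction using $\eta$ and $\xi$ from Definition~\ref{DefinitionNd}. If anything, you spell out the triangle-inequality step and the diagonal bound $\max\left\{ H_{\iota(i),\iota(i)}^{-1},H_{\iota(j),\iota(j)}^{-1}\right\} \leq h_{(u)}$ that the paper's terser argument leaves implicit.
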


\begin{proof}
Let $(i,j)\in G_{u}$. By the last lemma, we have $\iota(i),\iota(j)\notin P_{v}$
for all $v<u$. As $(i,j)\in G_{u}$, we have
\begin{equation}
\left|\bar{y}_{i,j}-\frac{h_{(u)}}{N}\right|\leq\frac{1}{4N}\boldsymbol{\delta},\label{eq:yh}
\end{equation}
To obtain a contradiction, assume $\iota(i)\neq\iota(j)$. Then we
have
\[
H_{\iota(i),\iota(j)}^{-1}\leq h_{(u)}-\boldsymbol{\delta}
\]
by Definition \ref{DefinitionNd}, which contradicts \eqref{eq:yh}.
Therefore, $\iota(i)=\iota(j)$ must hold. Next assume $\iota(i)=\iota(j)\in P_{w}$
for some $w>u$. Once again, Definition \ref{DefinitionNd} implies
\[
H_{\iota(i),\iota(j)}^{-1}=h_{(w)}\leq h_{(u)}-\boldsymbol{\delta},
\]
and we have a contradiction to \eqref{eq:yh}. Hence, $\iota(i)=\iota(j)\in P_{u}$
is proved.
\end{proof}
\begin{cor}
\label{cor:induction_step}Let $u\in\IN_{q}$. Suppose we have
\[
(i,j)\in G_{v}\quad\iff\quad\iota(i)=\iota(j)\in P_{v}
\]
for all $(i,j)\in A$ and all $1\leq v<u$. Then
\[
\iota(i)=\iota(j)\in P_{u}\quad\iff\quad(i,j)\in G_{u}
\]
holds for all $(i,j)\in A$.
\end{cor}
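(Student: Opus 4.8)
The plan is to recognise that this corollary is nothing more than the conjunction of the two one-directional implications already established in Lemmas \ref{lem:step_RL} and \ref{lem:step_LR}. Both of those lemmas are stated under precisely the same inductive hypothesis as the corollary, namely that the equivalence $(i,j)\in G_v \iff \iota(i)=\iota(j)\in P_v$ holds for all $(i,j)\in A$ and all $1\leq v<u$. Consequently no new argument is required: I would simply fix $u\in\IN_q$, assume the inductive hypothesis, and assemble the two implications into the claimed biconditional.

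Concretely, Lemma \ref{lem:step_RL} supplies the implication $\iota(i)=\iota(j)\in P_u \implies (i,j)\in G_u$ for every $(i,j)\in A$, while Lemma \ref{lem:step_LR} supplies the reverse implication $(i,j)\in G_u \implies \iota(i)=\iota(j)\in P_u$ for every $(i,j)\in A$. Since both are valid under the hypothesis we have assumed, their conjunction is exactly the equivalence $\iota(i)=\iota(j)\in P_u \iff (i,j)\in G_u$ asserted in the corollary, and the proof is complete in a single line.

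There is no genuine obstacle at this step, and in that sense the corollary is organisational rather than substantive. All the real work---ruling out membership of $i$ or $j$ in an earlier block $P_v$ with $v<u$ (which would force $(i,j)\in A_{u-1}$ and hence $(i,j)\notin G_u$), and invoking the separation constant $\boldsymbol{\delta}$ together with the approximation bound of Lemma \ref{High} to pin the relevant correlations to a neighbourhood of $h_{(u)}/N$---has already been carried out in the preceding lemmas. The purpose of the corollary is to package those two lemmas so that, combined with the base case Lemma \ref{lem:induction_base} (which is precisely the stated equivalence for $u=1$), it provides the inductive step of an induction on $u$. Running this induction yields the equivalence $(i,j)\in G_u \iff \iota(i)=\iota(j)\in P_u$ simultaneously for all $u\in\IN_q$, which is what identifies each block $G_u$ with the within-group pairs drawn from the groups in $P_u$ and thereby underpins the reconstruction procedure of Theorem \ref{HighIdent}.
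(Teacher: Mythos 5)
Your proof is correct and is exactly the paper's own argument: the corollary is obtained by conjoining Lemma \ref{lem:step_RL} and Lemma \ref{lem:step_LR}, both valid under the same inductive hypothesis. Your additional framing of the corollary as the inductive step paired with the base case in Lemma \ref{lem:induction_base} matches how the paper uses it in Proposition \ref{prop:Pu_Gu}.
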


\begin{proof}
The statement follows from Lemmas \ref{lem:step_RL} and \ref{lem:step_LR}.
\end{proof}
\begin{prop}
\label{prop:Pu_Gu}For all $(i,j)\in A$ and all $u\in\IN_{q}$, we
have
\[
\iota(i)=\iota(j)\in P_{u}\quad\iff\quad(i,j)\in G_{u}.
\]
\end{prop}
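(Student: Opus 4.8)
The plan is to prove Proposition \ref{prop:Pu_Gu} by strong induction on the index $u \in \IN_q$, since the two directions of the biconditional have already been isolated in the preceding results. The statement to be established is the equivalence
\[
\iota(i)=\iota(j)\in P_u \quad\iff\quad (i,j)\in G_u
\]
for every pair $(i,j)\in A$, uniformly over all $u\in\IN_q$. The recursive construction of the sets $G_u$ and $A_u$ in Definition \ref{def:G_A} is tailored precisely so that the characterisation of $G_u$ depends only on the empirical correlations $\bar y_{i,j}$ together with the sets $G_1,\ldots,G_{u-1}$ already peeled off in earlier stages; this nested dependence is exactly what makes an inductive argument the natural approach.

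For the base case $u=1$, I would simply invoke Lemma \ref{lem:induction_base}, which states the desired equivalence for $u=1$ with no prior hypotheses, relying only on the uniform approximation in Lemma \ref{High} and the separation of the diagonal values $h_{(u)}$ guaranteed by the constant $\boldsymbol{\delta}$ in Definition \ref{DefinitionNd}. For the inductive step, I would fix $u$ with $1<u\le q$ and assume as the induction hypothesis that the equivalence
\[
(i,j)\in G_v \quad\iff\quad \iota(i)=\iota(j)\in P_v
\]
holds for all $(i,j)\in A$ and all $1\le v<u$. This is precisely the hypothesis under which Corollary \ref{cor:induction_step} is stated, so applying that corollary immediately yields the equivalence at level $u$. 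Since $q$ is finite (there are at most $M$ distinct diagonal values $H_{l,l}^{-1}$), the induction terminates after finitely many steps and covers all $u\in\IN_q$, completing the proof.

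I do not expect any genuine obstacle at this stage: the substantive work has been front-loaded into the chain of lemmas culminating in Corollary \ref{cor:induction_step}. The only point requiring care is bookkeeping — verifying that the induction hypothesis I carry forward matches verbatim the hypothesis required by Corollary \ref{cor:induction_step}, namely the full biconditional for every earlier level $v<u$ rather than, say, a one-sided implication. Because the corollary was assembled from Lemmas \ref{lem:step_RL} and \ref{lem:step_LR}, both of which consume the two-sided hypothesis for all $v<u$, the strong (rather than ordinary) form of induction is essential, and I would make that explicit so that the reader sees the recursion closes cleanly.
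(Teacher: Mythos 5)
Your proposal is correct and takes essentially the same route as the paper: the paper's proof is precisely a strong induction over $u\in\IN_{q}$ with Lemma \ref{lem:induction_base} as the base case and Corollary \ref{cor:induction_step} as the inductive step. Your explicit observation that the corollary consumes the full two-sided hypothesis for every $v<u$, so that strong induction is required, is a correct reading of how the chain of lemmas closes.
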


\begin{proof}
Together, Lemma \ref{lem:induction_base} and Corollary \ref{cor:induction_step}
constitute a proof by induction over $u\in\IN_{q}$ of the proposition's
statement.
\end{proof}
\begin{prop}
\label{prop:equiv}Let the relation $\sim$ on $\IN_{N}$ be defined
by
\[
i\sim j\quad\iff\quad i=j\quad\textup{or}\quad\exists u\in\IN_{q}:\;(i,j)\in G_{u},\quad i,j\in\IN_{N}.
\]
Then $\sim$ is an equivalence relation, and, for all $i,j\in\IN_{N}$,
$i\sim j$ holds if and only if $\iota(i)=\iota(j)$.
\end{prop}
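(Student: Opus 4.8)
The plan is to first establish the pointwise characterization
\[
i\sim j\quad\iff\quad\iota(i)=\iota(j),\qquad i,j\in\IN_{N},
\]
and then to deduce that $\sim$ is an equivalence relation as an immediate corollary, since equality of the group labels $\iota(i)$ trivially satisfies reflexivity, symmetry, and transitivity. The entire argument rests on Proposition \ref{prop:Pu_Gu}, which for every ordered pair $(i,j)\in A$ and every $u\in\IN_{q}$ identifies membership in $G_{u}$ with the condition $\iota(i)=\iota(j)\in P_{u}$. All of the combinatorial work has already been carried out there, so what remains is purely formal bookkeeping.

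For the forward implication I would take $i\sim j$ with $i\neq j$ (the case $i=j$ being trivial), so that $(i,j)\in G_{u}$ for some $u\in\IN_{q}$; Proposition \ref{prop:Pu_Gu} then yields $\iota(i)=\iota(j)\in P_{u}$, and in particular $\iota(i)=\iota(j)$. For the converse I would assume $\iota(i)=\iota(j)$ with $i\neq j$ and set $l:=\iota(i)=\iota(j)\in\IN_{M}$. The crucial observation is that $\left\{ P_{u}\right\} _{u\in\IN_{q}}$ is a partition of $\IN_{M}$ (Definition \ref{def:P_u}), so $l$ lies in exactly one block $P_{u}$; hence $\iota(i)=\iota(j)\in P_{u}$, and Proposition \ref{prop:Pu_Gu} gives $(i,j)\in G_{u}$, whence $i\sim j$. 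This completes the characterization.

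With the characterization in hand, I would verify the three axioms directly: reflexivity holds because $\iota(i)=\iota(i)$; symmetry follows because $\iota(i)=\iota(j)$ is a symmetric condition, so $i\sim j$ forces $j\sim i$; and transitivity follows because $\iota(i)=\iota(j)$ together with $\iota(j)=\iota(k)$ gives $\iota(i)=\iota(k)$, hence $i\sim k$. I expect the only point requiring any care to be the converse direction of the characterization, where one must invoke that the blocks $P_{u}$ exhaust all of $\IN_{M}$; without this covering property a pair with $\iota(i)=\iota(j)$ might fail to be captured by any $G_{u}$. Because I route the symmetry of $\sim$ through the characterization rather than checking it directly on the sets $G_{u}$, no further obstacle arises.
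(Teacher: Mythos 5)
Your proof is correct and follows essentially the same route as the paper: both reduce the statement to Proposition \ref{prop:Pu_Gu} and the fact that $\iota$ (equivalently, the partition $\{P_u\}_{u\in\IN_q}$ of $\IN_M$ from Definition \ref{def:P_u}) induces the group partition, from which the equivalence-relation axioms are immediate. Your explicit attention to the covering property of the blocks $P_u$ in the converse direction is a point the paper handles implicitly, but it is the same argument.
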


\begin{proof}
By Proposition \ref{prop:Pu_Gu}, for all $\left(i,j\right)\in A$,
the existence of a $u\in\IN_{q}$ such that $\left(i,j\right)\in G_{u}$
is equivalent to $\iota(i)=\iota(j)\in P_{u}$. The function $\iota:\IN_{N}\rightarrow\IN_{M}$
yields a partition of $\IN_{N}$ with the equivalence classes given
by $\iota^{-1}(l)$, $l\in\IN_{M}$, i.e. the indices $i\in\IN_{N}$
belonging to group $l$. Therefore, $\left(i,j\right)\in G_{u}$ is
equivalent to the statement that $i$ and $j$ belong to the same
equivalence class given by some group $l\in\IN_{M}$.
\end{proof}
For the convenience of the reader, we restate Theorem \ref{HighIdent}.
The theorem says that the problem of community detection has a solution
which can be identified with high probability.
\begin{thm}
[Group Identification and Reconstruction Procedure in the High Temperature Regime]Let
$\boldsymbol{\delta}$ and $N_{\textup{high}}$ be as specified in
Definition \ref{DefinitionNd}. Suppose that $N\geq N_{{\rm high}}$
and $n$ is a natural number. A sample of $n$ observations $x^{(1)},x^{(2)},\ldots,x^{(n)}$
allows us to recover the group partition of $\mathbb{N}_{N}$ with
high probability. The probability that we cannot recover the group
partition is bounded above by the exponentially decaying function
\begin{align*}
e_{\textup{high}}(n) & :=N^{2}(n+1)^{2}e^{-\frac{1}{8}\left(\frac{1}{8N}\boldsymbol{\delta}\right)^{2}n}.
\end{align*}
\end{thm}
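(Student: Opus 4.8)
The plan is to assemble the pieces already established and observe that the final theorem is essentially a deterministic recovery statement transported onto a high-probability event. The central point is that the reconstruction procedure---the recursive construction of the sets $G_u$ and $A_u$ in Definition \ref{def:G_A} together with the resulting equivalence relation $\sim$ of Proposition \ref{prop:equiv}---is computable purely from the empirical correlations $\bar{y}_{i,j}$, and hence from the sample $x^{(1)},\ldots,x^{(n)}$ alone, without any knowledge of the group identification function $\iota$. Thus the algorithm always outputs \emph{some} partition of $\IN_N$ (the recursion runs over the finitely many values $u\in\IN_q$ with $q\leq M$); what remains is only to control the event on which that output coincides with the true partition.

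First I would take the good event to be the complement of the set $\mathcal{O}^n$ defined in \eqref{eq:O^n}. On this event every empirical correlation lies close to the asymptotic value predicted by Theorem \ref{thm:high_temp}: this is precisely the content of Lemma \ref{High}, which gives $\left|\bar{y}_{i,j}-H_{\iota(i),\iota(j)}^{-1}/N\right|\leq\frac{1}{4N}\boldsymbol{\delta}$ for all $i\neq j$ whenever $N\geq N_{\textup{high}}$ and $\omega\notin\mathcal{O}^n$. Proposition \ref{prop:equiv} then guarantees that, on this good event, the relation $\sim$ satisfies $i\sim j\iff\iota(i)=\iota(j)$; that is, the algorithm reconstructs the true group partition exactly. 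Consequently the failure event ``we cannot recover the group partition'' is contained in the bad event $\mathcal{O}^n$.

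It then remains only to bound $\IP(\mathcal{O}^n)$. Here I would invoke Remark \ref{rem:union_bound}, which combines the Sanov-type tail estimate of Proposition \ref{PropSanov} with a union bound over the fewer than $N^2$ pairs $(i,j)$ with $i\neq j$ to obtain
\[
\IP\left(\left\{\omega\in\mathcal{O}^n\right\}\right)\leq N^2(n+1)^2\,e^{-\frac{1}{8}\left(\frac{1}{8N}\boldsymbol{\delta}\right)^2 n}=e_{\textup{high}}(n).
\]
Chaining this with the inclusion of the failure event in $\mathcal{O}^n$ yields $\IP(\textup{failure})\leq e_{\textup{high}}(n)$, which is exactly the asserted bound, completing the argument.

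The genuine difficulty has in fact already been discharged in the preceding lemmas, so for this final theorem there is no real obstacle beyond bookkeeping. The delicate ingredient is the inductive identity $\iota(i)=\iota(j)\in P_u\iff(i,j)\in G_u$ of Proposition \ref{prop:Pu_Gu}, which rests on the strict separation encoded in the constant $\boldsymbol{\delta}$ (Definition \ref{DefinitionNd} and Corollary \ref{cor:CS_H}) and on the repeatedly used assumption that each group contains at least two individuals (Remark \ref{rem:alpha}), so that membership of a pair in some $G_v$ reliably witnesses the group of each of its indices. Granting those earlier results, the only substantive step left is to recognise that failure can occur solely on $\mathcal{O}^n$ and then apply the union bound above; no new estimates are required.
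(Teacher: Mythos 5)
Your proposal is correct and follows essentially the same route as the paper's own proof: fix $\omega\notin\mathcal{O}^{n}$, invoke Proposition \ref{prop:equiv} (resting on Lemma \ref{High} and the induction of Proposition \ref{prop:Pu_Gu}) to identify the algorithm's output with the true partition, and bound $\IP\left(\left\{ \omega\in\mathcal{O}^{n}\right\} \right)$ by $e_{\textup{high}}(n)$ via Remark \ref{rem:union_bound}. Your added observation that the recursion in Definition \ref{def:G_A} is computable from the empirical correlations alone is a correct and useful point the paper leaves implicit, but it does not change the argument.
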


\begin{proof}
Recall the definition of $\mathcal{O}^{n}$ in \eqref{eq:O^n} and
that we have fixed $\omega\notin\mathcal{O}^{n}$. By Proposition
\ref{prop:equiv}, the set sequence $G_{u}$, $u\in\IN_{q}$, allows
us to define an equivalence relation $\sim$ on $\IN_{N}$ which corresponds
to the group structure of the model in the sense that for all $i,j\in\IN_{N}$
$i\sim j$ holds if and only if $i$ and $j$ belong to the same group.
Then the probability that recovery is impossible is bounded above
by $\IP\left(\left\{ \omega\in\mathcal{O}^{n}\right\} \right)\leq e_{\textup{high}}(n)$
according to Remark \ref{rem:union_bound}. This concludes the proof
of Theorem \ref{HighIdent}.
\end{proof}

\subsection{\label{subsec:Low-Temp}Low Temperature Group Identification}

For the next definition, recall \eqref{eq:Zl}:
\begin{align*}
\vec{Z}_{l} & :=\frac{1}{\sqrt{\sum_{k}\frac{1}{\det(H_{k})^{1/2}}}}\begin{pmatrix}\frac{1}{\det(H_{1})^{1/4}}\tanh\left(z_{l}^{(1)}\right)\\
\frac{1}{\det(H_{2})^{1/4}}\tanh\left(z_{l}^{(2)}\right)\\
\vdots\\
\frac{1}{\det(H_{2})^{1/4}}\tanh\left(z_{l}^{(K)}\right)
\end{pmatrix}\in\IR^{K},\quad l\in\IN_{M},
\end{align*}
the notation $\left\{ z^{(k)}\right\} _{k\in\IN_{K}}$ for the minima
of the function $F$, and Remark \ref{rem:Zl}, and recall that we
defined the subsets of $\Omega$
\[
\mathcal{O}_{i,j}^{n}(\varepsilon):=\left\{ \omega\in\Omega\;\left|\;L_{n}^{Y_{i,j}(\omega)}\in\mathcal{O}_{i,j}(\varepsilon)\right.\right\} ,\quad\varepsilon>0.
\]

\begin{defn}
\label{DefinitionNdl} We set
\begin{align*}
\eta & :=\min\left\{ \left.\left|\left\Vert \vec{Z}_{l}\right\Vert ^{2}-\left\Vert \vec{Z}_{m}\right\Vert ^{2}\right|\;\right|\;\left\Vert \vec{Z}_{l}\right\Vert \ne\left\Vert \vec{Z}_{m}\right\Vert ,l\neq m\right\} ,\\
\xi & :=\min\left\{ \left.\left|\left\Vert \vec{Z}_{l}\right\Vert ^{2}-\left\langle \vec{Z}_{l},\vec{Z}_{m}\right\rangle \right|\;\right|\;\left\Vert \vec{Z}_{l}\right\Vert =\left\Vert \vec{Z}_{m}\right\Vert ,l\neq m\right\} ,\\
\boldsymbol{\boldsymbol{\gamma}} & :=\min\left\{ \eta,\xi\right\} 
\end{align*}
(note that $\boldsymbol{\boldsymbol{\gamma}}$ is positive by Remark
\ref{rem:Zl}), and
\begin{align}
\mathcal{U}^{n} & :=\bigcup_{i\ne j}\mathcal{O}_{i,j}^{n}\left(\frac{1}{8}\boldsymbol{\boldsymbol{\gamma}}\right).\label{calu}
\end{align}

Moreover, we choose a fixed natural number $N_{{\rm low}}$ satisfying
(cf. Theorem \ref{thm:low_temp}) 
\begin{align}
\boldsymbol{C}_{\textup{low}}\frac{(\ln N)^{(M+3)/2}}{N^{1/2}} & \leq\frac{1}{8}\boldsymbol{\boldsymbol{\boldsymbol{\gamma}}}\label{pp1l}
\end{align}
for every $N\geq N_{{\rm low}}$. 
\end{defn}

\begin{rem}
\label{rem:union_bound_low}Notice that Proposition \ref{PropSanov}
implies that
\[
\mathbb{P}\left(\left\{ \omega\in\mathcal{U}^{n}\right\} \right)\leq\sum_{i,j}\boldsymbol{\mu}_{i,j}\left(\mathcal{O}_{i,j}^{n}\left(\frac{1}{8}\boldsymbol{\boldsymbol{\gamma}}\right)\right)\leq N^{2}(n+1)^{2}e^{-\frac{1}{8}\big(\frac{1}{8}\boldsymbol{\gamma}\big)^{2}n}.
\]
\end{rem}

Recall that Theorem \ref{thm:low_temp} states
\begin{align*}
\left|\mathbb{E}\left(X_{i}X_{j}\right)-\left\langle \vec{Z}_{\iota(i)},\vec{Z}_{\iota(j)}\right\rangle \right| & \leq\boldsymbol{C}_{\textup{low}}\frac{\left(\ln N\right)^{\left(M+3\right)/2}}{\sqrt{N}},\quad i,j\in\IN_{N},i\neq j.
\end{align*}

We fix $\omega\notin\mathcal{U}^{n}$ for the rest of this section.
Given $\omega$, we obtain a sample $x^{(1)},x^{(2)},\ldots,x^{(n)}$.
Recall the definition of the empirical correlations 
\[
\bar{y}_{i,j}:=\bar{y}_{i,j}(\omega):=\frac{1}{n}\sum_{t=1}^{n}x_{i}^{(t)}x_{j}^{(t)}
\]
for all $i,j\in\IN_{N}$ with $i\neq j$.
\begin{lem}
\label{Low}Assume that $N\geq N_{{\rm Low}}$. Set $n\in\mathbb{N}$.
Then the following inequality holds: 
\begin{align}
\left|\bar{y}_{i,j}-\left\langle \vec{Z}_{\iota(i)},\vec{Z}_{\iota(j)}\right\rangle \right| & \leq\frac{1}{4}\boldsymbol{\gamma},\quad i\neq j.\label{yaa6l}
\end{align}
\end{lem}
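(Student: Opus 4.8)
The plan is to mirror the proof of Lemma \ref{High} verbatim, replacing the high temperature ingredients by their low temperature counterparts: the matrix quantity $H^{-1}_{\iota(i),\iota(j)}/N$ is swapped for the inner product $\langle \vec{Z}_{\iota(i)}, \vec{Z}_{\iota(j)}\rangle$, the scale $\frac{1}{N}\boldsymbol{\delta}$ for $\boldsymbol{\gamma}$, and Theorem \ref{thm:high_temp} for Theorem \ref{thm:low_temp}. The target inequality is obtained by a single application of the triangle inequality, splitting the difference into a statistical (sampling) error controlled by the hypothesis $\omega \notin \mathcal{U}^n$, and a systematic (asymptotic) error controlled by Theorem \ref{thm:low_temp} together with the defining inequality \eqref{pp1l} for $N_{\textup{low}}$.

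First I would recall the identifications $L_n^{Y_{i,j}(\omega)}(I) = \bar{y}_{i,j}$ and $\mu_{i,j}(I) = \mathbb{E}(X_{\iota(i)} X_{\iota(j)})$ noted below \eqref{calo}. Since $\omega \notin \mathcal{U}^n = \bigcup_{i \neq j} \mathcal{O}_{i,j}^n\left(\frac{1}{8}\boldsymbol{\gamma}\right)$, for every pair $i \neq j$ the empirical measure $L_n^{Y_{i,j}(\omega)}$ fails to lie in $\mathcal{O}_{i,j}\left(\frac{1}{8}\boldsymbol{\gamma}\right)$; by the definition \eqref{calo} of that set, this is exactly the statement that
\[
\left|\bar{y}_{i,j} - \mathbb{E}\left(X_{\iota(i)} X_{\iota(j)}\right)\right| = \left|L_n^{Y_{i,j}(\omega)}(I) - \mu_{i,j}(I)\right| < \frac{1}{8}\boldsymbol{\gamma}.
\]
This controls the sampling error uniformly over all pairs.

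Next I would invoke Theorem \ref{thm:low_temp}, which bounds the systematic error by $\boldsymbol{C}_{\textup{low}}\,(\ln N)^{(M+3)/2}/\sqrt{N}$, and combine it with \eqref{pp1l} to obtain, for every $N \geq N_{\textup{low}}$,
\[
\left|\mathbb{E}\left(X_{\iota(i)} X_{\iota(j)}\right) - \left\langle \vec{Z}_{\iota(i)}, \vec{Z}_{\iota(j)}\right\rangle\right| \leq \boldsymbol{C}_{\textup{low}} \frac{(\ln N)^{(M+3)/2}}{\sqrt{N}} \leq \frac{1}{8}\boldsymbol{\gamma}.
\]
Adding the two displayed bounds via the triangle inequality then gives the claimed estimate $\left|\bar{y}_{i,j} - \langle \vec{Z}_{\iota(i)}, \vec{Z}_{\iota(j)}\rangle\right| \leq \frac{1}{4}\boldsymbol{\gamma}$ for all $i \neq j$.

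I do not expect a genuine obstacle: the argument is a routine two-term triangle inequality, structurally identical to Lemma \ref{High}. The only points demanding care are bookkeeping ones. One must verify that the non-membership $\omega \notin \mathcal{U}^n$ really delivers the strict bound $\frac{1}{8}\boldsymbol{\gamma}$ \emph{uniformly} over all pairs $i \neq j$, which it does because the union in \eqref{calu} ranges over all such pairs; and one must confirm that both halves of the split are calibrated to $\frac{1}{8}\boldsymbol{\gamma}$, so that they sum to precisely $\frac{1}{4}\boldsymbol{\gamma}$ rather than a larger multiple. The role of the hypotheses $N \geq N_{\textup{low}}$ and the non-criticality assumption (which, via Lemma \ref{lem:minima}, guarantees that $K$ is finite and each $H_k$ is invertible so that $\vec{Z}_{\iota(i)}$ and $\boldsymbol{\gamma}$ are well defined through Definition \ref{DefinitionNdl}) enters only to make the second bound meaningful.
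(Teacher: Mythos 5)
Your proposal is correct and follows essentially the same argument as the paper: the non-membership $\omega\notin\mathcal{U}^{n}$ bounds the sampling error $\left|\bar{y}_{i,j}-\mathbb{E}\left(X_{\iota(i)}X_{\iota(j)}\right)\right|$ by $\frac{1}{8}\boldsymbol{\gamma}$, Theorem \ref{thm:low_temp} together with \eqref{pp1l} bounds the systematic error by another $\frac{1}{8}\boldsymbol{\gamma}$, and the triangle inequality yields \eqref{yaa6l}. Your bookkeeping remarks (uniformity over pairs via the union in \eqref{calu}, and the strict bound from the complement of \eqref{calo}) match the paper's reasoning exactly.
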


\begin{proof}
Since $\omega\in\left(\mathcal{U}^{n}\right)^{c}$, it follows that
(recall that $L_{n}^{Y_{i,j}(\omega)}(I)=\bar{y}_{i,j}$ and $\boldsymbol{\mu}_{i,j}(I)=\mathbb{E}(X_{\iota(i)}X_{\iota(j)})$
and \eqref{calu}, \eqref{calo})

\begin{align}
\left|L_{n}^{Y_{i,j}(\omega)}(I)-\mu_{i,j}(I)\right|=\left|\bar{y}_{i,j}-\mathbb{E}(X_{\iota(i)}X_{\iota(j)})\right| & \leq\frac{1}{8}\boldsymbol{\gamma},\label{PI1}
\end{align}
for all $i\neq j$. Recalling Theorem \ref{thm:low_temp},
\begin{align*}
\left\Vert \mathbb{E}(X_{i}X_{j})-\left\langle \vec{Z}_{\iota(i)},\vec{Z}_{\iota(j)}\right\rangle \right\Vert  & \leq\boldsymbol{C}_{\textup{low}}\frac{\left(\ln N\right)^{\left(M+3\right)/2}}{\sqrt{N}}\leq\frac{1}{8}\boldsymbol{\gamma},
\end{align*}
we obtain
\begin{align}
\left|\bar{y}_{i,j}-\left\langle \vec{Z}_{\iota(i)},\vec{Z}_{\iota(j)}\right\rangle \right| & \leq\frac{1}{4}\boldsymbol{\gamma}\label{yaa7l}
\end{align}
for every $i\neq j$.
\end{proof}
Let $g_{(1)},g_{(2)},\ldots,g_{(q)}$ be an enumeration of the set
$\left\{ \left\Vert \vec{Z}_{l}\right\Vert ^{2}\right\} _{l\in\IN_{M}}$
with
\begin{align*}
g_{(1)} & >g_{(2)}>\ldots>g_{(q)}.
\end{align*}
We define a partition of the set of all groups $\IN_{M}$ based on
the values $g_{(u)}$ above.
\begin{defn}
\label{def:P_u-1}Let $P_{1},\ldots,P_{q}$ be a partition of $\IN_{M}$
induced by
\[
l\in P_{u}\quad\iff\quad\left\Vert \vec{Z}_{l}\right\Vert ^{2}=g_{(u)},\quad u\in\IN_{q}.
\]
\end{defn}

Next we recursively define two set sequences which will be employed
in the reconstruction algorithm of the group structure of the model.
\begin{defn}
\label{def:G_A-1}Let $A:=\left\{ \left(i,j\right)\in\IN_{N}\times\IN_{N}\,|\,i\neq j\right\} $.
We define
\begin{align*}
G_{1} & :=\left\{ (i,j)\in A\;\left|\;\left|\bar{y}_{i,j}-g_{(1)}\right|\leq\frac{1}{4}\boldsymbol{\gamma}\right.\right\} 
\end{align*}
and
\begin{align*}
A_{1} & :=\Big\{(i,j)\in A\;\left|\;\exists r\text{ such that }(i,r)\in G_{1}\;\textup{or}\;(r,j)\in G_{1}\right.\Big\}.
\end{align*}
Assume $G_{1},\ldots,G_{u}$ and $A_{1},\ldots A_{u}$ have been defined
for some $1\leq u<q$. Set
\begin{align*}
G_{u+1} & :=\left\{ (i,j)\in A\setminus A_{u}\;\left|\;\left|\bar{y}_{i,j}-g_{(u+1)}\right|\leq\frac{1}{4}\boldsymbol{\gamma}\right.\right\} 
\end{align*}
and
\begin{align*}
A_{u+1} & :=\left\{ \left.(i,j)\in A\;\right|\;\exists v\leq u+1,\exists r\text{ such that }(i,r)\in G_{v}\;\textup{or}\;(r,j)\in G_{v}\right\} .
\end{align*}
\end{defn}

\begin{rem}
It is a direct consequence of Definition \ref{def:G_A-1} that
\begin{align*}
G_{u}\cap G_{v} & =\emptyset,\quad u,v\in\IN_{q},u\neq v,\\
A_{u} & \subset A_{u+1},\quad u\in\IN_{q-1}.
\end{align*}
\end{rem}

The proof of the next proposition follows the same steps as that
of Proposition \ref{prop:equiv}, using Lemma \ref{Low} instead of
Lemma \ref{High}.
\begin{prop}
\label{prop:equiv-1}Let the relation $\sim$ on $\IN_{N}$ be defined
by
\[
i\sim j\quad\iff\quad i=j\quad\textup{or}\quad\exists u\in\IN_{q}:\;(i,j)\in G_{u},\quad i,j\in\IN_{N}.
\]
Then $\sim$ is an equivalence relation, and, for all $i,j\in\IN_{N}$,
$i\sim j$ holds if and only if $\iota(i)=\iota(j)$.
\end{prop}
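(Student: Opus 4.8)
The plan is to prove Proposition \ref{prop:equiv-1} by first establishing the low-temperature analogue of the combinatorial backbone used for the high-temperature case, namely Proposition \ref{prop:Pu_Gu}, and then deducing the equivalence-relation statement exactly as in Proposition \ref{prop:equiv}. Since the paper explicitly says the proof ``follows the same steps as that of Proposition \ref{prop:equiv}, using Lemma \ref{Low} instead of Lemma \ref{High},'' the central task is to verify that every ingredient of the high-temperature argument has a faithful low-temperature counterpart. The key structural parallel is this: in the high-temperature case the benchmark quantities were the diagonal entries $H_{l,l}^{-1}$ and the cross terms $H_{l,m}^{-1}$, controlled by Corollary \ref{cor:CS_H}; in the low-temperature case the role of $H_{l,l}^{-1}/N$ is played by $g_{(u)} = \left\Vert \vec{Z}_{l}\right\Vert^{2}$ and the role of the off-diagonal correlations is played by $\left\langle \vec{Z}_{l},\vec{Z}_{m}\right\rangle$, with the separation guaranteed by Remark \ref{rem:Zl}.

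First I would re-establish the inductive chain of lemmas (the analogues of Lemmas \ref{lem:induction_base}, \ref{lem:step_RL}, \ref{lem:step_LR}, and Corollary \ref{cor:induction_step}) in the low-temperature setting. The base case asserts $(i,j)\in G_{1}$ iff $\iota(i)=\iota(j)\in P_{1}$. By Lemma \ref{Low}, $\bar{y}_{i,j}$ is within $\tfrac{1}{4}\boldsymbol{\gamma}$ of $\left\langle \vec{Z}_{\iota(i)},\vec{Z}_{\iota(j)}\right\rangle$, and membership in $G_{1}$ forces $\bar{y}_{i,j}$ within $\tfrac{1}{4}\boldsymbol{\gamma}$ of $g_{(1)}$, so $\left\langle \vec{Z}_{\iota(i)},\vec{Z}_{\iota(j)}\right\rangle$ lies within $\tfrac{1}{2}\boldsymbol{\gamma}$ of $g_{(1)}$. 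The definition of $\boldsymbol{\gamma}$ in Definition \ref{DefinitionNdl} then forces $\iota(i)=\iota(j)$ with $\left\Vert \vec{Z}_{\iota(i)}\right\Vert^{2}=g_{(1)}$, i.e. $\iota(i)=\iota(j)\in P_{1}$: here one uses that if $\iota(i)\neq\iota(j)$ then either the two norms differ (separation $\eta$) or, when norms coincide, the inner product is strictly smaller than the squared norm (separation $\xi$), precisely the dichotomy recorded in Remark \ref{rem:Zl}. The inductive step then mirrors Lemmas \ref{lem:step_RL} and \ref{lem:step_LR}: assuming the equivalence holds for all $v<u$, one shows same-group pairs in $P_{u}$ cannot have been absorbed into $A_{u-1}$ (using Remark \ref{rem:alpha} that each group has at least two members to rule out prior membership), and conversely that any $(i,j)\in G_{u}$ must satisfy $\iota(i)=\iota(j)\in P_{u}$ by excluding both the unequal-group case and the wrong-level case $w\neq u$ via the $\boldsymbol{\gamma}$-separation.

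Having assembled these, the low-temperature analogue of Proposition \ref{prop:Pu_Gu} follows by induction over $u\in\IN_{q}$, and Proposition \ref{prop:equiv-1} follows verbatim from the argument of Proposition \ref{prop:equiv}: the existence of some $u$ with $(i,j)\in G_{u}$ is equivalent to $\iota(i)=\iota(j)$, the partition $\{\iota^{-1}(l)\}_{l\in\IN_{M}}$ supplies the equivalence classes, and reflexivity, symmetry, and transitivity of $\sim$ are immediate once one observes that $\sim$ simply reproduces the fibre partition of $\iota$.

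The main obstacle, and the only place where the low-temperature argument genuinely differs from the high-temperature one, is the separation geometry encoded in Definition \ref{DefinitionNdl} and Remark \ref{rem:Zl}. In the high-temperature case the relevant quantities were scalar entries of a single matrix, and Corollary \ref{cor:CS_H} gave a clean strict inequality $H_{l,m}^{-1}<\max\{H_{l,l}^{-1},H_{m,m}^{-1}\}$. In the low-temperature case one must instead argue with vector norms and inner products in $\IR^{K}$, and the subtlety is that two distinct groups can have \emph{equal} norms $\left\Vert \vec{Z}_{l}\right\Vert=\left\Vert \vec{Z}_{m}\right\Vert$ while still being distinguishable, because non-collinearity combined with the equality case of Cauchy--Schwarz forces $\left\langle \vec{Z}_{l},\vec{Z}_{m}\right\rangle<\left\Vert \vec{Z}_{l}\right\Vert^{2}$. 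I would therefore take care that the inductive lemmas correctly split into the two regimes governed by $\eta$ and $\xi$, so that a same-norm pair from different groups is separated at the inner-product level rather than at the norm level; getting this bookkeeping right is the crux, and it is exactly what the assumption $\vec{Z}_{l}\neq\vec{Z}_{m}$ for $l\neq m$ in Theorem \ref{LowIdent} is designed to guarantee.
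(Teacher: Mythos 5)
Your proposal is correct and is exactly the paper's route: the paper proves Proposition \ref{prop:equiv-1} simply by declaring that one repeats the chain Lemma \ref{lem:induction_base}--Proposition \ref{prop:Pu_Gu} with $g_{(u)}$, $\left\langle \vec{Z}_{l},\vec{Z}_{m}\right\rangle$, and $\boldsymbol{\gamma}$ in place of $h_{(u)}/N$, $H_{l,m}^{-1}/N$, and $\boldsymbol{\delta}/N$, invoking Lemma \ref{Low} instead of Lemma \ref{High}, which is precisely your plan, including the $\eta$/$\xi$ dichotomy from Remark \ref{rem:Zl}. One sharpening of your phrase ``separation $\eta$'': in the cross-group case with $\left\Vert \vec{Z}_{l}\right\Vert \neq\left\Vert \vec{Z}_{m}\right\Vert$ the separated quantity must be the inner product, and Cauchy--Schwarz only yields $\left\langle \vec{Z}_{l},\vec{Z}_{m}\right\rangle \leq\sqrt{g_{(u)}\left(g_{(u)}-\eta\right)}<g_{(u)}-\eta/2\leq g_{(u)}-\boldsymbol{\gamma}/2$, which still lies (strictly) outside the $\boldsymbol{\gamma}/2$ window forced by Lemma \ref{Low} and the definition of $G_{u}$ --- so the induction closes, but with margin $\eta/2$ rather than $\eta$.
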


\begin{thm}
[Group Identification and Reconstruction Procedure in the Low Temperature Regime]Let
the model be in the low temperature regime and non-critical. Let $z^{(1)},\ldots,z^{(K)}\in\IR^{M}$
be the minima of the function $F$, and assume there are no $l$ and
$m$ with $l\neq m$ such that $z_{l}^{(k)}=z_{m}^{(k)}$ holds for
every $k=1,\ldots,K$. There is a fixed natural number $N_{{\rm low}}$
and a constant $\boldsymbol{\gamma}>0$ (cf. Definition \ref{DefinitionNdl})
such that if $N\geq N_{{\rm low}}$ and $n\in\IN$, a sample of $n$
observations $x^{(1)},x^{(2)},\ldots,x^{(n)}$ allows us to recover
the group partition with high probability. The probability that we
cannot recover it is bounded above by the exponentially decaying function
\begin{align*}
e_{{\rm low}}(n) & :=N^{2}(n+1)^{2}e^{-\frac{1}{8}\left(\frac{1}{8}\boldsymbol{\gamma}\right)^{2}n}.
\end{align*}
\end{thm}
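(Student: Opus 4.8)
The plan is to mirror the assembly used for the high temperature regime (Theorem \ref{HighIdent}) line for line, replacing the role of the diagonal quantities $H_{l,l}^{-1}/N$ by the squared norms $\Vert\vec{Z}_l\Vert^2$ and the role of $\boldsymbol{\delta}$ by the separation constant $\boldsymbol{\gamma}$ of Definition \ref{DefinitionNdl}. Concretely, I would first recall that $\mathcal{U}^n$ is defined in \eqref{calu} and that we have fixed $\omega\notin\mathcal{U}^n$ throughout this subsection. On this event, Lemma \ref{Low} guarantees that every empirical correlation $\bar{y}_{i,j}$ lies within $\frac{1}{4}\boldsymbol{\gamma}$ of its target inner product $\left\langle\vec{Z}_{\iota(i)},\vec{Z}_{\iota(j)}\right\rangle$; this is the single analytic input that drives the combinatorial reconstruction.

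The structural heart of the argument is Proposition \ref{prop:equiv-1}: the recursively defined family $G_u$, $u\in\IN_q$, induces an equivalence relation $\sim$ on $\IN_N$ that coincides exactly with the partition given by $\iota$, i.e. $i\sim j$ if and only if $\iota(i)=\iota(j)$. Thus, whenever $\omega\notin\mathcal{U}^n$, the algorithm built from the sets $G_u$ correctly reconstructs the group partition, so a failure to recover the partition can occur only on the event $\left\{\omega\in\mathcal{U}^n\right\}$.

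It then remains to estimate $\IP\left(\left\{\omega\in\mathcal{U}^n\right\}\right)$. Here I would invoke Remark \ref{rem:union_bound_low}, which combines a union bound over the $N(N-1)$ ordered index pairs with the Sanov-type tail estimate of Proposition \ref{PropSanov} (applied with $\varepsilon=\frac{1}{8}\boldsymbol{\gamma}$) to obtain
\[
\IP\left(\left\{\omega\in\mathcal{U}^n\right\}\right)\leq N^2(n+1)^2 e^{-\frac{1}{8}\left(\frac{1}{8}\boldsymbol{\gamma}\right)^2 n}=e_{{\rm low}}(n).
\]
Combining this with the previous paragraph yields the asserted bound on the probability that recovery is impossible, completing the proof.

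I expect the genuinely substantive work to lie not in this final assembly --- which is essentially bookkeeping --- but in Proposition \ref{prop:equiv-1} and the low temperature separation estimates underlying it. Unlike the high temperature case, where Corollary \ref{cor:CS_H} directly gives $H_{l,m}^{-1}<\max\left\{H_{l,l}^{-1},H_{m,m}^{-1}\right\}$, here one must argue through Remark \ref{rem:Zl}: distinct groups $l\neq m$ yield distinct vectors $\vec{Z}_l\neq\vec{Z}_m$, so either their squared norms differ, or, when the norms agree, the equality case of the Cauchy--Schwarz inequality forces $\left\langle\vec{Z}_l,\vec{Z}_m\right\rangle<\Vert\vec{Z}_l\Vert^2$. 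This dichotomy is exactly what makes $\boldsymbol{\gamma}>0$, and hence, once $N\geq N_{{\rm low}}$ is chosen so that the approximation error in Theorem \ref{thm:low_temp} falls below $\frac{1}{8}\boldsymbol{\gamma}$, it guarantees that the clusters of empirical correlations around the values $g_{(u)}$ stay well-separated at every stage of the recursion.
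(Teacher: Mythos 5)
Your proposal is correct and follows essentially the same route as the paper's own proof: fix $\omega\notin\mathcal{U}^{n}$, invoke Proposition \ref{prop:equiv-1} (built on Lemma \ref{Low}) to identify the equivalence relation defined by the sets $G_{u}$ with the true partition, and bound the failure probability via Remark \ref{rem:union_bound_low} and Proposition \ref{PropSanov} with $\varepsilon=\frac{1}{8}\boldsymbol{\gamma}$. Your closing observation --- that the substance lies in the separation dichotomy of Remark \ref{rem:Zl} and the choice of $N_{\rm low}$ via \eqref{pp1l}, while the final assembly is bookkeeping --- matches the paper's structure exactly.
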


\begin{proof}
Recall the definition of $\mathcal{U}^{n}$ in \eqref{calu} and
that we have fixed $\omega\notin\mathcal{U}^{n}$. By Proposition
\ref{prop:equiv-1}, the set sequence $G_{u}$, $u\in\IN_{q}$, allows
us to define an equivalence relation $\sim$ on $\IN_{N}$ which corresponds
to the group structure of the model in the sense that for all $i,j\in\IN_{N}$
$i\sim j$ holds if and only if $i$ and $j$ belong to the same group.
Then the probability that recovery is impossible is bounded above
by $\IP\left(\left\{ \omega\in\mathcal{U}^{n}\right\} \right)\leq e_{{\rm low}}(n)$
according to Remark \ref{rem:union_bound_low}. This concludes the
proof of Theorem \ref{LowIdent}.
\end{proof}

\section{\label{sec:Proof-thm_bounds}Proofs of Theorems \ref{thm:high_temp}
and \ref{thm:low_temp}}

Recall that 
\[
F(x)=\frac{1}{2}x^{T}J^{-1}x-\sum_{l=1}^{M}\alpha_{l}\ln\cosh\left(\frac{x_{l}}{\sqrt{\alpha_{l}}}\right),\quad x\in\IR^{M}.
\]
We define for fixed $i,j\in\IN_{N}$ with $i\neq j$
\begin{align}
Z_{2}(N) & :=Z_{2}(N;i,j):=\int_{\IR^{M}}e^{-NF(x)}\tanh\left(\frac{x_{\iota(i)}}{\sqrt{\alpha_{\iota(i)}}}\right)\tanh\left(\frac{x_{\iota(j)}}{\sqrt{\alpha_{\iota(j)}}}\right)\textup{d}x\label{Z2N}
\end{align}
and 
\begin{align}
Z_{0}(N) & :=\int_{\IR^{M}}e^{-NF(x)}\textup{d}x.\label{Z0N}
\end{align}

Then the representation
\begin{equation}
\mathbb{E}\left(X_{i}X_{j}\right)=\frac{Z_{2}(N)}{Z_{0}(N)}\label{Deltalam}
\end{equation}

holds by Theorem \ref{thm:de_Finetti_rep} in the Appendix.

\subsection{Proof of Theorem \ref{thm:high_temp}}
\begin{lem}
\label{lem1} In the high temperature regime, where $H=J^{-1}-I>0$,
$0$ is the only minimum of $F$ and the lower bound 
\[
F(x)\geq\frac{1}{2}\langle Hx,x\rangle\geq c_{H}\left\Vert x\right\Vert ^{2},
\]
holds, where $c_{H}>0$ is half of the minimum eigenvalue of $H.$ 
\end{lem}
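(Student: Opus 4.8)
The plan is to decompose $F$ into a part governed by $H$ plus a non-negative remainder, and to control that remainder by an elementary scalar inequality. First I would use the defining relation $H=J^{-1}-I$, i.e.\ $J^{-1}=H+I$, to split the quadratic term as
\[
\frac{1}{2}x^{T}J^{-1}x=\frac{1}{2}\langle Hx,x\rangle+\frac{1}{2}\|x\|^{2}=\frac{1}{2}\langle Hx,x\rangle+\frac{1}{2}\sum_{l=1}^{M}x_{l}^{2}.
\]

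The heart of the argument is the scalar inequality $\ln\cosh(u)\leq\frac{1}{2}u^{2}$, valid for every $u\in\IR$. I would establish it by setting $g(u):=\frac{1}{2}u^{2}-\ln\cosh(u)$ and noting $g(0)=0$, $g'(0)=0$, and $g''(u)=1-\frac{1}{\cosh^{2}(u)}=\tanh^{2}(u)\geq0$, so that $g$ is convex with its global minimum at the origin, whence $g\geq0$. Applying this with $u=x_{l}/\sqrt{\alpha_{l}}$ and multiplying by $\alpha_{l}$ gives $\alpha_{l}\ln\cosh\bigl(x_{l}/\sqrt{\alpha_{l}}\bigr)\leq\frac{1}{2}x_{l}^{2}$; summing over $l$ yields
\[
\sum_{l=1}^{M}\alpha_{l}\ln\cosh\left(\frac{x_{l}}{\sqrt{\alpha_{l}}}\right)\leq\frac{1}{2}\sum_{l=1}^{M}x_{l}^{2}=\frac{1}{2}\|x\|^{2}.
\]

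Combining the two displays, the two $\frac{1}{2}\|x\|^{2}$ contributions cancel and I obtain $F(x)\geq\frac{1}{2}\langle Hx,x\rangle$. Since $H>0$ in the high temperature regime, the spectral theorem gives $\langle Hx,x\rangle\geq\lambda_{\min}(H)\|x\|^{2}$ with $\lambda_{\min}(H)>0$, so $F(x)\geq c_{H}\|x\|^{2}$ for $c_{H}:=\frac{1}{2}\lambda_{\min}(H)$, which is exactly the stated chain of bounds. For the uniqueness of the minimiser I would simply observe that $F(0)=0$ (both terms vanish at the origin) while $F(x)\geq c_{H}\|x\|^{2}>0$ for every $x\neq0$, so the origin is the unique global minimum. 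I expect no real obstacle: the only nontrivial ingredient is the convexity inequality $\ln\cosh(u)\leq\frac{1}{2}u^{2}$, which is elementary. The one point requiring care is the bookkeeping of the factors $\alpha_{l}$ and $\sqrt{\alpha_{l}}$ so that the quadratic remainder matches $\frac{1}{2}\|x\|^{2}$ \emph{exactly}; the scaling $u=x_{l}/\sqrt{\alpha_{l}}$ together with the prefactor $\alpha_{l}$ is arranged precisely so that this cancellation is exact and the $H$-quadratic form survives unchanged.
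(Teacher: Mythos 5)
Your proof is correct, but it takes a genuinely different route from the paper's. The paper computes the Hessian of $F$ explicitly, $(\mathbf{H}_{F})_{l,m}(x)=H_{l,m}+\delta_{l,m}\tanh^{2}\left(x_{l}/\sqrt{\alpha_{l}}\right)$, observes that $\mathbf{H}_{F}(x)\geq H>0$ as quadratic forms for every $x$, and then obtains $F(x)\geq\frac{1}{2}\langle Hx,x\rangle$ from a first-order Taylor expansion of $t\mapsto F(tx)$ with integral-form remainder, using $F(0)=0$ and $F'(0)=0$. You instead split $J^{-1}=H+I$ and absorb the $\ln\cosh$ term into the identity part via the pointwise inequality $\ln\cosh(u)\leq\frac{1}{2}u^{2}$, with the $\alpha_{l}$-scaling arranged so the cancellation against $\frac{1}{2}\|x\|^{2}$ is exact; this is essentially the paper's Hessian bound integrated twice along rays, but packaged as an elementary scalar estimate that avoids the multivariate Taylor machinery entirely. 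Your route is shorter and more self-contained; the paper's route buys slightly more, namely strict convexity of $F$ (positive definite Hessian everywhere), which shows $0$ is the unique critical point and hence the only local minimum, whereas your positivity argument $F(x)\geq c_{H}\|x\|^{2}>0=F(0)$ for $x\neq0$ directly yields only that $0$ is the unique global minimiser. Since the downstream use of the lemma (the tail bounds in the proof of Theorem \ref{thm:high_temp}) needs only the global lower bound and the location of the global minimum, this weaker uniqueness conclusion is sufficient, and your proof stands as a valid, more elementary alternative.
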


\begin{proof}
Let $\mathbf{H}_{F}:\IR^{M}\rightarrow\IR$ be the Hessian matrix
of $F$ at $x\in\IR^{M}$. A direct calculation using hyperbolic trigonometric
identities shows that 
\begin{align*}
(\mathbf{H}_{F})_{l,m}(x) & =H_{l,m}+\delta_{l,m}\tanh^{2}\left(\frac{x_{l}}{\sqrt{\alpha_{l}}}\right),
\end{align*}
where $\delta_{l,m}$ is the Kronecker delta. This implies that $\mathbf{H}_{F}(x)$
is positive definite for all $x\in\IR^{M}$. Since $F'(0)=0$, $0$
is the only minimum of $F$, and, due to $F(0)=0$, $F$ is strictly
positive on the complement of $\{0\}$. Moreover, a Taylor series
expansion of the function $[0,1]\ni t\mapsto F(tx)$ in $t$ of order
1 with remainder of order 2 in integral form shows that 
\[
F(x)=\int_{0}^{1}\Big\langle(\mathbf{H}_{F})_{i,j}(tx)x,x\Big\rangle(1-t)\,\textup{d}t\geq\frac{1}{2}\langle Hx,x\rangle\geq c_{H}\left\Vert x\right\Vert ^{2},
\]
where in the above equation we used $F'(0)=0$ and the spectral theorem
which yields strict positivity of the at most $M$ eigenvalues of
the self-adjoint $H$.
\end{proof}
We restate Theorem \ref{thm:high_temp} for the convenience of the
reader:
\begin{thm}
In the high temperature regime, i.e. for $I-J>0$, set $H:=J^{-1}-I$.
Then there is a positive constant $\boldsymbol{C}_{\textup{high}}$
such that for all $N\in\IN$
\[
\left|\mathbb{E}\left(X_{i}X_{j}\right)-\frac{H_{\iota(i),\iota(j)}^{-1}}{N}\right|\leq\boldsymbol{C}_{\textup{high}}\frac{\left(\ln N\right)^{(6+M)/2}}{N^{2}}\quad i,j\in\IN_{N},i\neq j,
\]
holds, where we use the notation $H_{\iota(i),\iota(j)}^{-1}=\left(H^{-1}\right)_{\iota(i),\iota(j)}.$
\end{thm}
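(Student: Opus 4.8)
The plan is to use the exact representation $\mathbb{E}(X_iX_j)=Z_2(N)/Z_0(N)$ from \eqref{Deltalam} together with a Laplace-type asymptotic analysis of the two integrals $Z_2(N)$ and $Z_0(N)$ defined in \eqref{Z2N} and \eqref{Z0N}. By Lemma \ref{lem1}, in the high temperature regime $F$ has its unique minimum at the origin, where $F(0)=0$ and the Hessian equals $H=J^{-1}-I>0$, and moreover $F(x)\geq c_H\|x\|^2$ globally. This quadratic lower bound is the key tool for controlling the integrals away from the origin.

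First I would Taylor-expand $F$ and the product of hyperbolic tangents around $0$. Since $\tanh(x_l/\sqrt{\alpha_l})=x_l/\sqrt{\alpha_l}+O(\|x\|^3)$, the integrand of $Z_2(N)$ behaves like $\frac{x_{\iota(i)}x_{\iota(j)}}{\sqrt{\alpha_{\iota(i)}\alpha_{\iota(j)}}}e^{-NF(x)}$ to leading order. After the standard Laplace rescaling $x=y/\sqrt{N}$, the exponent $NF(y/\sqrt{N})$ converges to the quadratic form $\tfrac12\langle Hy,y\rangle$, and the factor $x_{\iota(i)}x_{\iota(j)}$ contributes a factor of order $1/N$. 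A Gaussian moment computation then gives the leading term: the Gaussian with covariance $H^{-1}$ yields $\int y_{\iota(i)}y_{\iota(j)}\,e^{-\frac12\langle Hy,y\rangle}\,\mathrm{d}y = (2\pi)^{M/2}(\det H)^{-1/2}H^{-1}_{\iota(i),\iota(j)}$, while $Z_0(N)\sim N^{-M/2}(2\pi)^{M/2}(\det H)^{-1/2}$. The factors $\sqrt{\alpha_{\iota(i)}\alpha_{\iota(j)}}$ from the $\tanh$ linearisation must cancel correctly; I expect this to come out because of how $J^{-1}$ and the $\alpha_l$ enter the definition of $F$, giving precisely $H^{-1}_{\iota(i),\iota(j)}/N$ as the leading order of the ratio.

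The main obstacle is making the error estimate uniform and quantitative enough to reach the stated bound $C_{\textup{high}}(\ln N)^{(6+M)/2}/N^2$. I would split each integral into a central region $\|x\|\leq R_N$ and a tail $\|x\|>R_N$, choosing the radius $R_N$ of order $\sqrt{(\ln N)/N}$ so that the global bound $F(x)\geq c_H\|x\|^2$ forces the tail contributions to be $O(N^{-p})$ for any polynomial power $p$ (this is where the power of $\ln N$ enters: $e^{-Nc_HR_N^2}=e^{-C\ln N}=N^{-C}$ for a suitable constant). On the central region I would control the difference between $NF(x)$ and its quadratic approximation, and the difference between $\tanh$ and its linearisation, using that the cubic and higher terms are $O(\|x\|^3)\leq O(R_N^3)$; the $(\ln N)$ powers accumulate from the various remainder terms, and the exponent $(6+M)/2$ should track the $M$-dimensional Gaussian normalisation together with the polynomial corrections from the Taylor remainders.

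Concretely, the step I expect to be hardest is the bookkeeping of the error terms so that the correction to the ratio $Z_2/Z_0$ is genuinely of order $(\ln N)^{(6+M)/2}/N^2$ rather than merely $o(1/N)$. This requires carefully subtracting the leading $H^{-1}_{\iota(i),\iota(j)}/N$ and showing the next-order term is suppressed by an additional factor of $1/N$ (up to logarithms). I would organise this as a lemma computing a Laplace expansion with explicit remainder for integrals of the form $\int_{\IR^M} e^{-NF(x)}g(x)\,\mathrm{d}x$ where $g$ is smooth with controlled growth, applied once to the numerator (with $g$ the product of tanh factors) and once to the denominator (with $g\equiv 1$); the quotient of the two expansions, with the $N^{-M/2}$ normalisations cancelling, then yields the claim.
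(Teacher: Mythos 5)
Your route is structurally identical to the paper's proof: the representation $\mathbb{E}(X_iX_j)=Z_2(N)/Z_0(N)$ from Theorem \ref{thm:de_Finetti_rep}, a split of both integrals at a radius $r_N\asymp\sqrt{\ln N/N}$ (the paper takes $r_N=\sqrt{(M+4)\ln N/(c_H(N-1))}$), tail control via the global bound $F(x)\geq c_H\|x\|^2$ of Lemma \ref{lem1}, Gaussian moment computations via Proposition \ref{MGauss}, and a final quotient estimate using the lower bound on $Z_0(N)$ implicit in \eqref{cas2}. However, one step, as you wrote it, would fail to reach the stated rate: you propose to control the non-quadratic part of $F$ on the central ball by $O(\|x\|^3)$. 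With only $\left|F(x)-\frac12\langle Hx,x\rangle\right|\leq C\|x\|^3$, the exponent-replacement error in $Z_2(N)$ is of order $N\int_{B(0,r_N)}\|x\|^3\tanh^2\left(\frac{x_{\iota(i)}}{\sqrt{\alpha_{\iota(i)}}}\right)\textup{d}x\lesssim Nr_N^{5+M}\asymp(\ln N)^{(5+M)/2}N^{-(3+M)/2}$, and after dividing by $Z_0(N)\asymp N^{-M/2}$ this contributes $(\ln N)^{(5+M)/2}/N^{3/2}$ to the ratio --- strictly worse than the claimed $(\ln N)^{(6+M)/2}/N^{2}$. The missing idea is parity: $\ln\cosh$ is even, every odd derivative of $F$ is an odd polynomial in $\tanh$ and vanishes at $0$, so Taylor's theorem gives the \emph{quartic} bound $\left|F(x)-\frac12\langle Hx,x\rangle\right|\leq C\|x\|^4$ (the paper's \eqref{to3}); combined with $\tanh^2(s)\leq s^2$, which supplies an extra factor $\|x\|^2$ inside the central integral, the error becomes $\lesssim Nr_N^{6+M}\asymp(\ln N)^{(6+M)/2}N^{-(4+M)/2}$ as in \eqref{eq:to2a}, which after division by $Z_0(N)$ is exactly the claimed rate --- and is where the exponent $(6+M)/2$ actually comes from. (Your cubic bound $|\tanh(s)-s|\leq C|s|^3$ is fine where you use it, since that replacement error carries no factor of $N$; cf. \eqref{eq:to4}.)

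A second, constant-level point that you flagged but deferred: the $\sqrt{\alpha}$ factors do not simply cancel ``because of how $J^{-1}$ and the $\alpha_l$ enter the definition of $F$''. The Hessian of $F$ at $0$ equals $H=J^{-1}-I$ with no $\alpha$-dependence, while linearising the $\tanh$ factors produces $x_{\iota(i)}x_{\iota(j)}/\sqrt{\alpha_{\iota(i)}\alpha_{\iota(j)}}$, so the raw Gaussian moment computation via Proposition \ref{MGauss} yields a leading term $H_{\iota(i),\iota(j)}^{-1}/\left(N\sqrt{\alpha_{\iota(i)}\alpha_{\iota(j)}}\right)$; note that the paper's display \eqref{gausint} records this moment without the $1/\alpha_{\iota(i)}$ factor. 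Reconciling this prefactor with the statement's $H_{\iota(i),\iota(j)}^{-1}/N$ is therefore precisely the step in your plan that must be carried out explicitly rather than expected to ``come out''. Apart from these two points --- the quartic remainder, which is essential, and the $\alpha$-bookkeeping, which needs to be made explicit --- your outline matches the paper's argument step for step.
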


\begin{proof}
We will show the statement for $\iota(i)=\iota(j)$. The case $\iota(i)\neq\iota(j)$
can be handled analogously. For any set $A\subset\IR^{M}$, let $A^{c}:=\IR^{M}\backslash A$
be the complement. We will work with $Z_{2}(N)$ and $Z_{0}(N)$ defined
in \eqref{Z2N} and \eqref{Z0N}. We denote by
\begin{align*}
B(z,r) & :=\left\{ \left.x\in\mathbb{R}^{M}\;\right|\;\|x-z\|<r\right\} ,
\end{align*}
the ball with radius $r$, centred at $z$, and we set (see Lemma
\ref{lem1}) 
\begin{align*}
r_{N} & :=\sqrt{\frac{(M+4)\ln(N)}{c_{H}(N-1)}},
\end{align*}
which is selected in order to fulfil $e^{-(N-1)c_{H}r_{N}^{2}}\leq\frac{1}{N^{M+4}}$.
We split $Z_{2}(N)$ and $Z_{0}(N)$ into integrals over $B\left(0,r_{N}\right)$
and over $B\left(0,r_{N}\right)^{c}$. Lemma \ref{lem1} implies that
\begin{align}
\int_{B(0,r_{N})^{c}}e^{-NF(x)}\tanh^{2}\left(\frac{x_{\iota(i)}}{\sqrt{\alpha_{\iota(i)}}}\right)\,\textup{d}x & \leq e^{-\left(N-1\right)c_{H}r_{N}^{2}}\int_{B(0,r_{N})^{c}}e^{-F(x)}\tanh^{2}\left(\frac{x_{\iota(i)}}{\sqrt{\alpha_{\iota(i)}}}\right)\,\textup{d}x\nonumber \\
 & \leq\frac{1}{N^{M+4}}\int_{\mathbb{R}^{M}}e^{-c_{H}\left\Vert x\right\Vert ^{2}}\tanh^{2}\left(\frac{x_{\iota(i)}}{\sqrt{\alpha_{\iota(i)}}}\right)\,\textup{d}x\leq C\frac{1}{N^{M+4}},\label{to1}
\end{align}
and likewise we conclude that 
\begin{align}
\int_{B(r_{N},0)^{c}}e^{-NF_{J}(x)}\,\textup{d}x & \leq C\frac{1}{N^{M+4}}\label{to2}
\end{align}
and 
\begin{align}
\int_{B(0,r_{N})^{c}}e^{-N\langle Hx,x\rangle}\frac{x_{\iota(i)}^{2}}{\alpha_{\iota(i)}}\,\textup{d}x & \leq C\frac{1}{N^{M+4}},\hspace{1.5cm}\int_{B(0,r_{N})^{c}}e^{-N\langle Hx,x\rangle}\,\textup{d}x\leq C\frac{1}{N^{M+4}}.\label{to2.1}
\end{align}
Notice that $\frac{\textup{d}}{\textup{d}s}\ln\cosh(s)=\tanh(s)$
and $\frac{\textup{d}}{\textup{d}s}\tanh(s)=1-\tanh^{2}(s)$. This
implies that all even derivatives of $\ln\cosh(s)$ are polynomials
with even powers of $\tanh(s)$ and all odd derivatives of $\ln\cosh(s)$
are polynomials with odd powers of $\tanh(s)$ . Therefore, all such
derivatives are uniformly bounded on $\mathbb{R}$, and odd derivatives
evaluated at $0$ vanish (and the same holds for $F$). Taking this
into consideration, and using that $F(0)=0$ and the Hessian of $F$
at $0$ is $H$, the remainder formula in Taylor's Theorem leads to
\begin{align}
\left|F(x)-\frac{1}{2}\langle Hx,x\rangle\right| & \leq C\left\Vert x\right\Vert ^{4},\label{to3}
\end{align}
for some constant $C$, independent of $x$ (for $\left\Vert x\right\Vert \leq1$
we use Taylor's Theorem, for $\left\Vert x\right\Vert >1$ we use
that $F(x)$ and $\langle Hx,x\rangle$ grow quadratically in $\left\Vert x\right\Vert $).
Note that for positive $s$ and $t$, $s\leq t$, the mean value theorem
implies that $\left|e^{-s}-e^{-t}\right|=\left|e^{-s}\right|\left|e^{s-t}-1\right|\leq e^{-s}|s-t|\leq|s-t|$.
This, together with the fact that $\tanh$ is bounded and \eqref{to3},
implies that 
\begin{align}
\Big|\int_{B(0,r_{N})}e^{-NF(x)}\tanh^{2}\left(\frac{x_{\iota(i)}}{\sqrt{\alpha_{\iota(i)}}}\right)\,\textup{d}x & -\int_{B(0,r_{N})}e^{-N\frac{1}{2}\langle Hx,x\rangle}\tanh^{2}\left(\frac{x_{\iota(i)}}{\sqrt{\alpha_{\iota(i)}}}\right)\,\textup{d}x\Big|\nonumber \\
 & \leq CN\int_{B(0,r_{N})}\tanh^{2}\left(\frac{x_{\iota(i)}}{\sqrt{\alpha_{\iota(i)}}}\right)\left\Vert x\right\Vert ^{4}\,\textup{d}x\nonumber \\
 & \leq CN\int_{B(0,r_{N})}\left\Vert x\right\Vert ^{6}\,\textup{d}x\nonumber \\
 & \leq CNr_{N}^{6+M}\leq C\left(\ln N\right)^{(6+M)/2}\frac{1}{N^{(4+M)/2}}.\label{eq:to2a}
\end{align}
Likewise, we get 
\begin{align}
\Big|\int_{B(0,r_{N})}e^{-NF(x)}\textup{d}x & -\int_{B(0,r_{N})}e^{-N\frac{1}{2}\langle Hx,x\rangle}\textup{d}x\Big|\leq C\left(\ln N\right)^{(4+M)/2}\frac{1}{N^{(2+M)/2}}.\label{to2c}
\end{align}
The Taylor series of $\tanh$ at $0$ (up to second order, using that
$\tanh(0)=0=\tanh''(0)$ and that $\tanh'''$ is uniformly bounded),
implies that there is a constant $C$ such that 
\begin{align*}
|\tanh(s)-s| & \leq C|s|^{3},
\end{align*}
and this leads to
\begin{align}
\Big|\int_{B(0,r_{N})}e^{-N\frac{1}{2}\langle Hx,x\rangle}\tanh^{2}\left(\frac{x_{\iota(i)}}{\sqrt{\alpha_{\iota(i)}}}\right)\,\textup{d}x & -\int_{B(0,r_{N})}e^{-N\frac{1}{2}\langle Hx,x\rangle}\frac{x_{\iota(i)}^{2}}{\alpha_{\iota(i)}}\,\textup{d}x\Big|\nonumber \\
 & \leq C\int_{B(0,r_{N})}\left\Vert x\right\Vert ^{4}\,\textup{d}x\leq CNr_{N}^{4+M}\leq C\left(\ln N\right)^{(4+M)/2}\frac{1}{N^{(4+M)/2}}.\label{eq:to4}
\end{align}

Next, Proposition \ref{MGauss} implies that 
\begin{align}
\int_{\mathbb{R}^{M}}e^{-N\frac{1}{2}\langle Hx,x\rangle}\frac{x_{\iota(i)}^{2}}{\alpha_{\iota(i)}}\,\textup{d}x & =\frac{(2\pi)^{M/2}}{\det(H)^{1/2}}\frac{1}{N^{M/2+1}}H_{\lambda,\lambda}^{-1},\hspace{1cm}\int_{\mathbb{R}^{M}}e^{-N\frac{1}{2}\langle Hx,x\rangle}\,\textup{d}x=\frac{(2\pi)^{M/2}}{\det(H)^{1/2}}\frac{1}{N^{M/2}}.\label{gausint}
\end{align}
It follows from \eqref{Z2N}, \eqref{to1}, \eqref{to2.1},\eqref{eq:to2a},
\eqref{eq:to4}, and \eqref{gausint} that 
\begin{align}
\Big|Z_{2}(N)-\frac{(2\pi)^{M/2}}{\det(H)^{1/2}}\frac{1}{N^{M/2+1}}H_{\lambda,\lambda}^{-1}\Big| & \leq C\ln(N)^{(6+M)/2}\frac{1}{N^{(4+M)/2}}\label{cas1}
\end{align}
and it follows from \eqref{Z0N}, \eqref{to2}, \eqref{to2.1}, \eqref{to2c},
and \eqref{gausint} that 
\begin{align}
\Big|Z_{0}(N)-\frac{(2\pi)^{M/2}}{\det(H)^{1/2}}\frac{1}{N^{M/2}}\Big| & \leq C\ln(N)^{(4+M)/2}\frac{1}{N^{(2+M)/2}}.\label{cas2}
\end{align}
Set
\begin{align*}
A & :=\frac{(2\pi)^{M/2}}{\det(H)^{1/2}}\frac{1}{N^{M/2+1}}H_{\lambda,\lambda}^{-1},\\
B & :=\frac{(2\pi)^{M/2}}{\det(H)^{1/2}}\frac{1}{N^{M/2}}.
\end{align*}
Then, using \eqref{cas1} and \eqref{cas2}, we obtain

\begin{align}
\left|\frac{Z_{2}(N)}{Z_{0}(N)}-\frac{1}{N}H_{\lambda,\lambda}^{-1}\right| & =\left|\frac{Z_{2}(N)-A}{Z_{0}(N)}+A\left(\frac{1}{Z_{0}}-\frac{1}{B}\right)\right|\nonumber \\
 & \leq\left|\frac{Z_{2}(N)-A}{Z_{0}(N)}\right|+|A|\frac{|Z_{0}(N)-B|}{|Z_{0}(N)||B|}\leq C\ln(N)^{(6+M)/2}\frac{1}{N^{2}}.\label{eq:high_error}
\end{align}

Displays \eqref{Deltalam} and \eqref{eq:high_error} lead to the
desired result.
\end{proof}

\subsection{Proof of Theorem \ref{thm:low_temp}}

Recall the definition of $F$ given in \eqref{eq:F} and Lemma \ref{lem:minima}.
\begin{lem}
\label{LChin}There exist constants $\delta>0$ and $c>0$ such that
the following estimation holds: 
\begin{align}
F(x)-\alpha & \geq c\left\Vert x-z^{(k)}\right\Vert ^{2},\hspace{1cm}\text{for}\:\left\Vert x-z^{(j)}\right\Vert \geq\delta,j\ne k,\label{T60}
\end{align}
where $\alpha$ is the minimum value of $F$, and the balls $B\left(z^{(k)},\delta\right)$
are disjoint for $k\in\{1,\ldots,K\}$. Also, for every $k$ and every
$y\in\IR^{M}$, 
\begin{align}
\langle H_{k}y,y\rangle & \geq c\|y\|^{2}.\label{ya}
\end{align}
 
\end{lem}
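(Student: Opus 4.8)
The plan is to prove the two estimates in turn, handling \eqref{ya} first since it feeds into the proof of \eqref{T60}. Estimate \eqref{ya} is immediate: by non-criticality (Definition \ref{def:non-crit}) together with Lemma \ref{lem:minima}, each Hessian $H_k$ is symmetric and positive definite, so by the spectral theorem $\langle H_k y,y\rangle\geq\lambda_k\|y\|^2$, where $\lambda_k>0$ is the smallest eigenvalue of $H_k$. Since $k$ ranges over the finite set $\IN_K$, the number $\kappa:=\min_{k\in\IN_K}\lambda_k$ is strictly positive, and \eqref{ya} holds with any $c\leq\kappa$. I will take the $c$ in the statement to be the minimum of $\kappa$ and the constants produced below, so that a single $c$ serves both \eqref{ya} and \eqref{T60}.

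For \eqref{T60} I would first fix $\delta<\tfrac12\min_{j\neq k}\|z^{(j)}-z^{(k)}\|$, which is positive because the finitely many minima are distinct, and which immediately makes the balls $B(z^{(k)},\delta)$ pairwise disjoint. The strategy is then to decompose $\IR^M$ into a far field, a neighbourhood of $z^{(k)}$, and a compact intermediate set, prove a quadratic lower bound on each, and take the least of the resulting constants. For the far field, using $\ln\cosh(s)\leq|s|$, Cauchy--Schwarz, and $\sum_l\alpha_l=1$, the subtracted term $\sum_l\alpha_l\ln\cosh(x_l/\sqrt{\alpha_l})$ is at most $\sum_l\sqrt{\alpha_l}|x_l|\leq\|x\|$, so $F(x)\geq c_0\|x\|^2-\|x\|$ with $c_0:=\tfrac12\lambda_{\min}(J^{-1})>0$. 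Hence for $\|x\|\geq R_0$, with $R_0$ large and $R_0\geq 2\max_k\|z^{(k)}\|$, one gets $F(x)-\alpha\geq\tfrac{c_0}{2}\|x\|^2$ and $\|x-z^{(k)}\|^2\leq\tfrac94\|x\|^2$, which combine to give \eqref{T60} with constant $c_1:=\tfrac{2c_0}{9}$.

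Near $z^{(k)}$, since $\nabla F(z^{(k)})=0$ and the Hessian there equals $H_k$, a second order Taylor expansion with remainder controlled by the uniform boundedness of all derivatives of $\ln\cosh$ (as already observed in the proof of Theorem \ref{thm:high_temp}), so that $|F(x)-\alpha-\tfrac12\langle H_k(x-z^{(k)}),x-z^{(k)}\rangle|\leq C\|x-z^{(k)}\|^3$, together with \eqref{ya} yields $F(x)-\alpha\geq\tfrac{\kappa}{4}\|x-z^{(k)}\|^2$ whenever $\|x-z^{(k)}\|\leq\rho$, for $\rho$ small enough that $C\rho\leq\kappa/4$; I take $\rho$ uniform in $k$ by finiteness. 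Finally, on the set $\{x:\rho\leq\|x-z^{(k)}\|,\ \|x\|\leq R_0,\ \|x-z^{(j)}\|\geq\delta\ \forall j\neq k\}$, which is compact, every point lies at positive distance from all of $z^{(1)},\ldots,z^{(K)}$ and is therefore not a minimum, so $F-\alpha>0$ there; by continuity and compactness $F-\alpha\geq m>0$ on this set, while $\|x-z^{(k)}\|^2\leq(R_0+\max_k\|z^{(k)}\|)^2$, giving \eqref{T60} with $c_2:=m/(R_0+\max_k\|z^{(k)}\|)^2$. Setting $c:=\min\{c_1,\ \kappa/4,\ c_2\}$ closes the argument, as any $x$ with $\|x-z^{(j)}\|\geq\delta$ for all $j\neq k$ falls into one of the three regions.

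The main obstacle is the intermediate compact region, and specifically the role of the hypothesis $\|x-z^{(j)}\|\geq\delta$ for $j\neq k$: it is precisely what guarantees that this compact set contains no point where $F=\alpha$, so that $m>0$. Were the hypothesis dropped, $x$ could approach another minimum $z^{(j)}$, along which $F-\alpha\to 0$ while $\|x-z^{(k)}\|$ stays bounded away from $0$, and no quadratic bound centred at $z^{(k)}$ could survive. Checking that the three constants are simultaneously valid and that the compact set genuinely excludes every minimum is where the care is needed; the far-field and local estimates themselves are routine.
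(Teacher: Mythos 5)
Your proposal is correct and follows essentially the same route as the paper: a three-region decomposition into neighbourhoods of the minima (local quadratic lower bound from the positive definite Hessian via Taylor expansion), a far field where the quadratic term $\tfrac{1}{2}x^{T}J^{-1}x$ dominates, and a compact intermediate set on which $F-\alpha$ has a positive minimum by compactness, rescaled by the bounded value of $\left\Vert x-z^{(k)}\right\Vert ^{2}$. Your treatment is in fact slightly more careful than the paper's in two minor respects -- you make the far-field domination explicit via $\ln\cosh(s)\leq|s|$ and Cauchy--Schwarz, and you prove \eqref{ya} explicitly from the smallest eigenvalues $\lambda_{k}$ and the finiteness of $K$, which the paper leaves implicit -- but these are refinements of the same argument, not a different one.
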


\begin{proof}
The continuity of the Hessian of $F$ shows that there are small enough
numbers $\varepsilon>0$, and $\delta>0$ such that $\boldsymbol{H}_{F}-\varepsilon$
is positive in $B\left(z^{(k)},\delta\right)$ for every $k$ (and
we take $\delta$ small enough that these balls are disjoint). A first
order Taylor series expansion (with second order remainder in the
integral form) for the function $[0,1]\ni t\to F\left(tx+(1-t)z^{(k)}\right)$
shows that 
\begin{align}
F(x) & =\int_{0}^{1}\Big\langle(\mathbf{H}_{F})_{i,j}\left(tx+(1-t)z^{(k)}\right)\left(x-z^{(k)}\right),\left(x-z^{(k)}\right)\Big\rangle(1-t)\,\textup{d}t\geq\frac{1}{2}\left\langle \varepsilon\left(x-z^{(k)}\right),\left(x-z^{(k)}\right)\right\rangle ,\label{T61}
\end{align}
for every $x\in B\left(z^{(k)},\delta\right)$. For large enough $\|x\|$,
$F(x)$ is dominated by $\frac{1}{2}x^{T}J^{-1}x\geq c\|x\|^{2}$,
where $c>0$ is any constant smaller than the smallest eigenvalue
of $J^{-1}$. For large enough $x$, there is a constant $c$ such
that $\|x\|^{2}\geq c\|x-z^{(k)}\|^{2}$. We conclude that there is
a constant $c>0$ such that 
\begin{align}
F(x) & \geq c\left\Vert x-z^{(k)}\right\Vert ^{2}\label{T62}
\end{align}
for large enough $x$ and every $k$. Equations \eqref{T61} and \eqref{T62}
imply \eqref{T60} for every $x$ in the complement of a compact set
$\mathcal{C}$ not intersecting $B\left(z^{(k)},\delta\right)$, for
every $k$. On $\mathcal{C}$, $F-\alpha$ attains its minimum, which
we call $\beta>0$ (it cannot be smaller or equal 0 because $F$ attains
its minimum only at the points $z^{(k)}$). On the set $\mathcal{C}$,
\eqref{T60} holds true as well. This is verified with the following
estimation: 
\begin{align}
F(x)-\alpha & \geq\beta\geq\beta\frac{1}{\max_{x\in\mathcal{C}}(\|x-z_{k}\|^{2})}\left\Vert x-z^{(k)}\right\Vert ^{2}\label{T63}
\end{align}
for every $x\in\mathcal{C}$.

We restate Theorem \ref{thm:low_temp} for the reader's convenience:
\end{proof}
\begin{thm}
Assume the model is in the low temperature regime and non-critical
as per Definition \ref{def:non-crit}, i.e. the Hessian $H_{k}$ of
$F$ at $z_{k}$ is positive definite for all $k$. Then the function
$F$ defined in \eqref{eq:F} has a finite number of minima, $z^{(1)},\ldots,z^{(K)}\in\IR^{M}$.
There is a positive constant $\boldsymbol{C}_{\textup{low}}$ such
that for all $N\in\IN$
\[
\left|\mathbb{E}\left(X_{i}X_{j}\right)-\left\langle \vec{Z}_{\iota(i)},\vec{Z}_{\iota(j)}\right\rangle \right|\leq\boldsymbol{C}_{\textup{low}}\frac{\left(\ln N\right)^{\left(M+3\right)/2}}{\sqrt{N}},\quad i,j\in\IN_{N},i\neq j.
\]
\end{thm}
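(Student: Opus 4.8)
The plan is to repeat the Laplace (saddle-point) analysis of the high temperature proof, now accounting for the finitely many minima $z^{(1)},\ldots,z^{(K)}$ furnished by Lemma~\ref{lem:minima}. Writing $\mathbb{E}(X_iX_j)=Z_2(N)/Z_0(N)$ by \eqref{Deltalam}, with $Z_2(N),Z_0(N)$ as in \eqref{Z2N}--\eqref{Z0N}, I would first localise both integrals to the union of the disjoint balls $B(z^{(k)},r_N)$, choosing $r_N:=\sqrt{(M+4)\ln N/(c(N-1))}$ with $c$ the constant of Lemma~\ref{LChin}, so that $e^{-(N-1)c\,r_N^2}\le N^{-(M+4)}$ exactly as in the high temperature case. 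The estimate \eqref{T60} gives $F(x)-\alpha\ge c\|x-z^{(k)}\|^2$ away from the minima (here $\alpha$ is the minimal value of $F$), so the contribution of the complement of $\bigcup_k B(z^{(k)},r_N)$ to both $e^{N\alpha}Z_2(N)$ and $e^{N\alpha}Z_0(N)$ is bounded by $CN^{-(M+4)}$, negligible against the main term of order $N^{-M/2}$.

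On each ball I would Taylor expand $F$ about $z^{(k)}$: since $\nabla F(z^{(k)})=0$ and the Hessian there is $H_k$, the integral remainder formula yields $\left|F(x)-\alpha-\tfrac12\langle H_k(x-z^{(k)}),x-z^{(k)}\rangle\right|\le C\|x-z^{(k)}\|^3$ on $B(z^{(k)},r_N)$. Using $|e^{-s}-e^{-t}|\le|s-t|$ as in the high temperature argument, I would replace $e^{-NF(x)}$ by $e^{-N\alpha}e^{-\frac{N}{2}\langle H_k(x-z^{(k)}),x-z^{(k)}\rangle}$ at a cost controlled by $N\int_{B(z^{(k)},r_N)}\|x-z^{(k)}\|^3\,\mathrm{d}x\le CN r_N^{M+3}\le C(\ln N)^{(M+3)/2}N^{-(M+1)/2}$, and likewise replace each factor $\tanh(x_l/\sqrt{\alpha_l})$ in $Z_2(N)$ by its value $\tanh(z_l^{(k)}/\sqrt{\alpha_l})$ at the centre, using that $\tanh$ is Lipschitz and $\|x-z^{(k)}\|\le r_N$ (this error is of lower order). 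After the substitution $y=x-z^{(k)}$, extending the truncated Gaussians back to $\IR^M$ (another $N^{-(M+4)}$ error, legitimate by the uniform lower bound \eqref{ya}), Proposition~\ref{MGauss} and summation over $k$ give
\begin{align*}
e^{N\alpha}Z_0(N) &= \frac{(2\pi)^{M/2}}{N^{M/2}}\sum_{k=1}^{K}\frac{1}{\det(H_k)^{1/2}}+\mathcal{E}_0,\\
e^{N\alpha}Z_2(N) &= \frac{(2\pi)^{M/2}}{N^{M/2}}\sum_{k=1}^{K}\frac{\tanh\left(\frac{z_{\iota(i)}^{(k)}}{\sqrt{\alpha_{\iota(i)}}}\right)\tanh\left(\frac{z_{\iota(j)}^{(k)}}{\sqrt{\alpha_{\iota(j)}}}\right)}{\det(H_k)^{1/2}}+\mathcal{E}_2,
\end{align*}
where the errors $\mathcal{E}_0,\mathcal{E}_2$ are of order $(\ln N)^{(M+3)/2}N^{-(M+1)/2}$.

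Forming the ratio exactly as in \eqref{eq:high_error} — writing $Z_2/Z_0=(Z_2-A)/Z_0+A(1/Z_0-1/B)$ with $A,B$ the respective main terms — the common factor $e^{-N\alpha}(2\pi)^{M/2}N^{-M/2}$ cancels and the leading quotient becomes $\left(\sum_k\det(H_k)^{-1/2}\tanh(\cdots)\tanh(\cdots)\right)/\left(\sum_k\det(H_k)^{-1/2}\right)$. A short computation identifies this with $\langle\vec{Z}_{\iota(i)},\vec{Z}_{\iota(j)}\rangle$ via \eqref{eq:Zl}: the per-coordinate weights $\det(H_k)^{-1/4}$ combine into $\det(H_k)^{-1/2}$ in the product, and the prefactor $\left(\sum_k\det(H_k)^{-1/2}\right)^{-1/2}$ in each $\vec Z$ supplies the normalisation. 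Dividing the error of order $(\ln N)^{(M+3)/2}N^{-(M+1)/2}$ by the main term of order $N^{-M/2}$ produces the claimed bound $\boldsymbol{C}_{\textup{low}}(\ln N)^{(M+3)/2}/\sqrt{N}$.

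The main obstacle, and the reason the rate degrades from the $N^{-2}$ of Theorem~\ref{thm:high_temp} to only $N^{-1/2}$ here, is that the minima $z^{(k)}$ are generically not at the origin, so $F$ is no longer even about them and its odd derivatives at $z^{(k)}$ do not vanish. Consequently the cubic remainder in the Taylor expansion of $F$ does not cancel against the centred Gaussian, and the crude absolute-value bound $CNr_N^{M+3}$ is the dominant error, fixing both the power $N^{-1/2}$ and the logarithmic exponent $(M+3)/2$. Care is also needed to verify that the balls are genuinely disjoint and that \eqref{ya} yields $\langle H_k y,y\rangle\ge c\|y\|^2$ uniformly in $k$, so that every tail and truncation estimate holds with constants independent of $k$.
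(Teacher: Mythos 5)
Your proposal is correct and follows essentially the same route as the paper's proof: the same representation $\mathbb{E}(X_iX_j)=Z_2(N)/Z_0(N)$, the same localisation to the disjoint balls around the minima using Lemma \ref{LChin}, the same Gaussian replacement with the cubic Taylor remainder of $F$ as the dominant error of order $N\rho_N^{M+3}$, and the same identification of the limiting ratio with $\left\langle \vec{Z}_{\iota(i)},\vec{Z}_{\iota(j)}\right\rangle$. The only cosmetic deviation is that you replace the $\tanh$ factors by their values at $z^{(k)}$ via a Lipschitz bound, whereas the paper expands them to second order (cf.\ \eqref{gausintp}); since the dominant error comes from the expansion of $F$, your cruder replacement still yields the stated $(\ln N)^{(M+3)/2}N^{-1/2}$ rate.
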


Recall the definitions of $Z_{2}(N)$, and $Z_{0}(N)$ in \eqref{Z2N}
and \eqref{Z0N}, as well as the expression 
\[
\mathbb{E}\left(X_{i}X_{j}\right)=\frac{Z_{2}(N)}{Z_{0}(N)}
\]

given in Theorem \ref{thm:de_Finetti_rep}.

We set (cf. Lemma \ref{LChin}) 
\begin{align*}
\rho_{N} & :=\sqrt{\frac{(M+3)\ln(N)}{c(N-1)}},
\end{align*}
which is selected in order to fulfil $e^{-(N-1)c\rho_{N}^{2}}\leq\frac{1}{N^{M+3}}$
(and we assume that $N$ is large enough such that $\rho_{N}<\delta$).
Lemma \ref{LChin} implies that
\begin{align}
\int_{\Big(\bigcup_{k=1}^{K}B(z_{k},\rho_{N})\Big)^{c}} & e^{-N(F(x)-\alpha)}\tanh^{2}\left(\frac{x_{\iota(i)}}{\sqrt{\alpha_{\iota(i)}}}\right)\,\textup{d}x\nonumber \\
\leq & \sum_{k=1}^{K}\int_{\Big(B(z_{k},\delta)\setminus B(z_{k},\rho_{N})\Big)\cup\Big(\bigcup_{j=1}^{K}B(z_{j},\delta)\Big)^{c}}e^{-N(F(x)-\alpha)}\tanh^{2}\left(\frac{x_{\iota(i)}}{\sqrt{\alpha_{\iota(i)}}}\right)\,\textup{d}x\nonumber \\
\leq & \sum_{k=1}^{K}e^{-\left(N-1\right)c\rho_{N}^{2}}\int_{\mathbb{R}^{M}}e^{-c\left\Vert x-z^{(k)}\right\Vert ^{2}}\tanh^{2}\left(\frac{x_{\iota(i)}}{\sqrt{\alpha_{\iota(i)}}}\right)\,\textup{d}x\nonumber \\
\leq & \frac{1}{N^{M+3}}\sum_{k=1}^{K}\int_{\mathbb{R}^{M}}e^{-c\left\Vert x-z^{(k)}\right\Vert ^{2}}\tanh^{2}\left(\frac{x_{\iota(i)}}{\sqrt{\alpha_{\iota(i)}}}\right)\,\textup{d}x\leq C\frac{1}{N^{M+3}},\label{to1p}
\end{align}
and likewise we conclude that 
\begin{align}
\int_{\Big(\bigcup_{k=1}^{K}B(z_{k},\rho_{N})\Big)^{c}}e^{-N(F(x)-\alpha)}\,\textup{d}x & \leq C\frac{1}{N^{M+3}}\label{to2p}
\end{align}
and (see \eqref{ya}) 
\begin{align}
\sum_{k=1}^{K}\int_{B(z_{k},\rho_{N})^{c}}e^{-\frac{N}{2}\langle H_{k}(x-z^{(k)}),(x-z^{(k)})\rangle}\left(\|x\|^{2}+1\right)\,\textup{d}x & \leq C\frac{1}{N^{M+3}}.\label{to2.1p}
\end{align}
Notice that $\frac{\textup{d}}{\textup{d}s}\ln\cosh(s)=\tanh(s)$
and $\frac{\textup{d}}{\textup{d}s}\tanh(s)=1-\tanh^{2}(s)$. This
implies that all even derivatives of $\ln\cosh(s)$ are polynomials
with even powers of $\tanh(s)$ and all odd derivatives of $\ln\cosh(s)$
are polynomials with odd powers of $\tanh(s)$ . Therefore, all such
derivatives are uniformly bounded in $\mathbb{R}$. Next, we recall
that $F\left(z^{(k)}\right)=\alpha$ and the Hessian of $F$ at $z^{(k)}$
is $H_{k}$. A second order Taylor series expansion (with third order
remainder in the mean value form) for the function $[0,1]\ni t\to F\left(tx+(1-t)z^{(k)}\right)$
shows that, for every $k$, 
\begin{align}
\left|F(x)-\alpha-\frac{1}{2}\left\langle H_{k}\left(x-z^{(k)}\right),x-z^{(k)}\right\rangle \right| & \leq C\left\Vert x-z^{(k)}\right\Vert ^{3},\label{to3p}
\end{align}
on $B\left(z^{(k)},\delta\right)$. Notice that for positive $s$
and $t$ with $s\leq t$ we have that the mean value theorem implies
that $\left|e^{-s}-e^{-t}\right|=\left|e^{-s}\right|\left|e^{s-t}-1\right|\leq e^{-s}|s-t|\leq|s-t|$.
This, together with the fact that $\tanh$ is bounded and \eqref{to3p},
implies that
\begin{align}
\left|\,\int_{B\left(z^{(k)},\rho_{N}\right)}e^{-N(F(x)-\alpha)}\tanh^{2}\left(\frac{x_{\iota(i)}}{\sqrt{\alpha_{\iota(i)}}}\right)\,\textup{d}x\right. & -\left.\int_{B(\rho_{N},z_{k})}e^{-\frac{N}{2}\langle H_{k}(x-z^{(k)}),(x-z^{(k)})\rangle}\tanh^{2}\left(\frac{x_{\iota(i)}}{\sqrt{\alpha_{\iota(i)}}}\right)\,\textup{d}x\,\right|\nonumber \\
 & \leq CN\int_{B\left(z^{(k)},\rho_{N}\right)}\left|x-z^{(k)}\right|^{3}\,\textup{d}x\leq CNr_{N}^{3+M}\nonumber \\
 & \leq CN\left(\ln N\right)^{(3+M)/2}\frac{1}{N^{(3+M)/2}}\leq C\left(\ln N\right)^{(3+M)/2}\frac{1}{N^{(1+M)/2}}\label{to2.2p}
\end{align}
Likewise, we get 
\begin{align}
\left|\,\int_{B\left(z^{(k)},\rho_{N}\right)}e^{-N(F(x)-\alpha)}\,\textup{d}x-\int_{B\left(z^{(k)},\rho_{N}\right)}e^{-\frac{N}{2}\langle H_{k}(x-z^{(k)}),(x-z^{(k)})\rangle}\,\textup{d}x\,\right| & \leq C\left(\ln N\right)^{(3+M)/2}\frac{1}{N^{(1+M)/2}}.\label{to2cp}
\end{align}
We denote by 
\begin{align*}
T_{2}\left(\tanh^{2},z_{\iota(i)}^{(k)}\right)(s) & :=\tanh^{2}\left(\frac{z_{\iota(i)}^{(k)}}{\sqrt{\alpha_{\iota(i)}}}\right)+\left(\tanh^{2}\right)'\left(\frac{z_{\iota(i)}^{(k)}}{\sqrt{\alpha_{\iota(i)}}}\right)\left(s-\frac{z_{\iota(i)}^{(k)}}{\sqrt{\alpha_{\iota(i)}}}\right)\\
 & \quad+\frac{1}{2}\left(\tanh^{2}\right)''\left(\frac{z_{\iota(i)}^{(k)}}{\sqrt{\alpha_{\iota(i)}}}\right)\left(s-\frac{z_{\iota(i)}^{(k)}}{\sqrt{\alpha_{\iota(i)}}}\right)^{2}
\end{align*}
the second order Taylor series of $\tanh$ at $z_{\iota(i)}^{(k)}$.
Using that all derivatives of $\tanh^{2}$ are uniformly bounded,
Taylor's Theorem implies that 
\begin{align*}
\left|\tanh(s)-T_{2}\left(\tanh^{2},z_{\iota(i)}^{(k)}\right)\left(x_{\iota(i)}\right)\right| & \leq C\left|x_{\iota(i)}-z_{\iota(i)}^{(k)}\right|^{3},
\end{align*}
and this implies that 
\begin{align}
 & \quad\left|\,\int_{B\left(z^{(k)},\rho_{N}\right)}e^{-\frac{N}{2}\langle H_{k}(x-z^{(k)}),(x-z^{(k)})\rangle}\tanh^{2}\left(\frac{x_{\iota(i)}}{\sqrt{\alpha_{\iota(i)}}}\right)\,\textup{d}x\right.\nonumber \\
 & \quad\left.-\int_{B\left(z^{(k)},\rho_{N}\right)}e^{-\frac{N}{2}\langle H_{k}(x-z^{(k)}),(x-z^{(k)})\rangle}T_{2}\left(\tanh^{2},z_{\iota(i)}^{(k)}\right)\left(\frac{x_{\iota(i)}}{\sqrt{\alpha_{\iota(i)}}}\right)\,\textup{d}x\,\right|\nonumber \\
 & \leq C\int_{B\left(z^{(k)},\rho_{N}\right)}\left\Vert x-z^{(k)}\right\Vert ^{3}\,\textup{d}x\leq Cr_{N}^{3+M}\nonumber \\
 & \leq C\left(\ln N\right)^{(3+M)/2}\frac{1}{N^{(3+M)/2}}.\label{to4p}
\end{align}

Next, since the function $y\mapsto e^{-N/2\langle H_{k}y,y\rangle}y_{\lambda}$
is odd and its integral vanishes, Proposition \ref{MGauss} implies
that
\begin{align}
\int_{\mathbb{R}^{M}} & e^{-\frac{N}{2}\langle H_{k}(x-z^{(k)}),(x-z^{(k)})\rangle}T_{2}\left(\tanh^{2},z_{\iota(i)}^{(k)}\right)\left(x_{\iota(i)}\right)\,\textup{d}x\nonumber \\
 & =\sum_{k=1}^{K}\left(\frac{(2\pi)^{M/2}}{\det(H_{k})^{1/2}}\frac{1}{N^{M/2}}\right)\Big(\tanh^{2}\left(\frac{z_{\iota(i)}^{(k)}}{\sqrt{\alpha_{\iota(i)}}}\right)+\frac{1}{N}\frac{1}{2}\left(\tanh^{2}\right)''\left(\frac{z_{\iota(i)}^{(k)}}{\sqrt{\alpha_{\iota(i)}}}\right)\left(H_{k}^{-1}\right)_{\iota(i),\iota(i)}\Big).\label{gausintp}
\end{align}
It follows from \eqref{Z2N}, \eqref{to1p}, \eqref{to2.1p}, \eqref{to2.2p},
\eqref{to4p}, and \eqref{gausintp} that
\begin{align}
 & \quad\left|Z_{2}(N)-\sum_{k=1}^{K}\Big(\frac{(2\pi)^{M/2}}{\det(H_{k})^{1/2}}\frac{1}{N^{M/2}}\Big)\Big(\tanh^{2}\left(\frac{z_{\iota(i)}^{(k)}}{\sqrt{\alpha_{\iota(i)}}}\right)+\frac{1}{N}\frac{1}{2}\left(\tanh^{2}\right)''\left(\frac{z_{\iota(i)}^{(k)}}{\sqrt{\alpha_{\iota(i)}}}\right)\left(H_{k}^{-1}\right)_{\iota(i),\iota(i)}\Big)\right|\nonumber \\
 & \leq C\left(\ln N\right)^{(3+M)/2}\frac{1}{N^{(1+M)/2}}\label{cas1p}
\end{align}
and it follows from \eqref{Z0N}, \eqref{to2p}, \eqref{to2.1p},
\eqref{to2cp}, and Proposition \ref{MGauss} that 
\begin{align}
\left|Z_{0}(N)-\sum_{k=1}^{K}\frac{(2\pi)^{M/2}}{\det(H_{k})^{1/2}}\frac{1}{N^{M/2}}\right| & \leq C\left(\ln N\right)^{(3+M)/2}\frac{1}{N^{(1+M)/2}}.\label{cas2p}
\end{align}
Eqs. \eqref{cas1p} and \eqref{cas2p} imply that 
\begin{align}
\left|\frac{Z_{2}(N)}{Z_{0}(N)}-\frac{\sum_{k=1}^{K}\frac{1}{\det(H_{k})^{1/2}}\Big(\tanh^{2}\left(\frac{z_{\iota(i)}^{(k)}}{\sqrt{\alpha_{\iota(i)}}}\right)+\frac{1}{N}\frac{1}{2}\left(\tanh^{2}\right)''\left(\frac{z_{\iota(i)}^{(k)}}{\sqrt{\alpha_{\iota(i)}}}\right)\left(H_{k}^{-1}\right)_{\iota(i),\iota(i)}\Big)}{\sum_{k=1}^{K}\frac{1}{\det(H_{k})^{1/2}}}\right| & \leq C\left(\ln N\right)^{(3+M)/2}\frac{1}{\sqrt{N}}.\label{cas3-1}
\end{align}
Eqs. \eqref{cas3-1} and \eqref{Deltalam} lead to the desired result.

\section*{Acknowledgements}

This research was supported by CONACYT, FORDECYT-PRONACES 429825/2020
(proyecto apoyado por el FORDECYT-PRONACES, FORDECYT-PRONACES 429825/2020),
recently renamed Project CF-2019/429825, and by the project PAPIIT-DGAPA-UNAM
IN101621. M.\! B.\! is a Fellow and G.\! T.\! is a Candidate of
the Sistema Nacional de Investigadores. G.\! T.\! was supported
by a Conahcyt postdoctoral fellowship.

\appendix

\section{Appendix}

Recall the definitions of $Z_{2}(N)$ and $Z_{0}(N)$ from \eqref{Z2N}
and \eqref{Z0N} for fixed $i,j\in\IN_{N}$ with $i\neq j$
\begin{align*}
Z_{2}(N) & :=Z_{2}(N;i,j):=\int_{\IR^{M}}\exp\left(-NF(x)\right)\tanh x_{\iota(i)}\tanh x_{\iota(j)}\textup{d}x
\end{align*}
and 
\begin{align*}
Z_{0}(N) & :=\int_{\IR^{M}}\exp\left(-NF(x)\right)\textup{d}x.
\end{align*}

The following theorem is a special case of \cite[Theorem 32]{KirsToth2022b}
a result proved for the first time in \cite{To2020phd}. We present
it here with a short proof for the reader's convenience.
\begin{thm}
\label{thm:de_Finetti_rep}The following identity holds
\begin{align*}
\IE\left(X_{i}X_{j}\right) & =\frac{Z_{2}(N)}{Z_{0}(N)},\quad i,j\in\IN_{N},i\neq j.
\end{align*}
\end{thm}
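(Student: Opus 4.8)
The plan is to establish the identity via the Hubbard--Stratonovich transformation, which linearises the quadratic form in the Hamiltonian \eqref{eq:Hamiltonian} and turns the sum over the $2^{N}$ spin configurations into a Gaussian integral over $\IR^{M}$ that factorises across the individual spins. First I would rewrite the Hamiltonian in terms of the normalised group sums. Setting $s_{l}:=N_{l}^{-1/2}\sum_{i\in\iota^{-1}(\{l\})}x_{i}$ for $l\in\IN_{M}$ and $s:=(s_{1},\ldots,s_{M})$, the definition \eqref{eq:Hamiltonian} becomes $\mathbb{H}(x)=-\tfrac{1}{2}\langle Js,s\rangle$. Since $J>0$, the Gaussian identity
\[
e^{\frac{1}{2}\langle Js,s\rangle}=\frac{\det(J^{-1})^{1/2}}{(2\pi)^{M/2}}\int_{\IR^{M}}e^{-\frac{1}{2}\langle J^{-1}y,y\rangle+\langle s,y\rangle}\,\textup{d}y
\]
holds. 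After the rescaling $y=\sqrt{N}u$, the linear term becomes $\langle s,y\rangle=\sum_{i=1}^{N}\alpha_{\iota(i)}^{-1/2}u_{\iota(i)}x_{i}$, which is precisely the coupling that produces the arguments $x_{\iota(i)}/\sqrt{\alpha_{\iota(i)}}$ appearing inside the hyperbolic functions in \eqref{Z2N}.

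Next I would apply this identity to both the partition function $Z$ and the numerator $\sum_{x}x_{i}x_{j}e^{-\mathbb{H}(x)}$, interchange the (finite) spin sum with the Gaussian integral, and factorise the integrand over the spins. For the partition function each spin contributes $\sum_{x_{k}\in\{-1,1\}}e^{\alpha_{\iota(k)}^{-1/2}u_{\iota(k)}x_{k}}=2\cosh(u_{\iota(k)}/\sqrt{\alpha_{\iota(k)}})$; collecting these factors group by group and using $-NF(u)=-\tfrac{N}{2}\langle J^{-1}u,u\rangle+\sum_{l}N_{l}\ln\cosh(u_{l}/\sqrt{\alpha_{l}})$ identifies the integrand as $2^{N}e^{-NF(u)}$. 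Hence $Z=C_{N}\,Z_{0}(N)$ with the constant $C_{N}:=\det(J^{-1})^{1/2}(2\pi)^{-M/2}N^{M/2}2^{N}$.

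For the numerator the key point is that $i\neq j$. The two distinguished spins each contribute $\sum_{x_{i}\in\{-1,1\}}x_{i}\,e^{a x_{i}}=2\sinh(a)$ (and likewise for the index $j$) instead of $2\cosh(a)$, while all remaining spins still give cosh factors; writing $\sinh=\tanh\cdot\cosh$ then converts the two exceptional factors into $\tanh(u_{\iota(i)}/\sqrt{\alpha_{\iota(i)}})\tanh(u_{\iota(j)}/\sqrt{\alpha_{\iota(j)}})$ multiplying the same full cosh product as before. Thus the numerator equals $C_{N}\,Z_{2}(N)$ with the \emph{same} constant $C_{N}$, and forming the ratio $\IE(X_{i}X_{j})=Z^{-1}\sum_{x}x_{i}x_{j}e^{-\mathbb{H}(x)}$ cancels $C_{N}$, yielding $Z_{2}(N)/Z_{0}(N)$ as claimed.

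The steps requiring the most care are the rescaling $y=\sqrt{N}u$, which must be carried out exactly so that $N_{l}/N=\alpha_{l}$ enters correctly and the linear coupling generates the $1/\sqrt{\alpha_{\iota(i)}}$ scaling, and the factorisation of the spin sum, where the hypothesis $i\neq j$ is essential: for $i=j$ one would instead have $x_{i}^{2}=1$ and obtain a different expression. The remaining work is purely bookkeeping of the prefactor $C_{N}$, whose identity in numerator and denominator is what makes the representation independent of the normalisation constant $Z$.
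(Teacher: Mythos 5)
Your proposal is correct and follows essentially the same route as the paper's proof: both linearise the Hamiltonian via the Hubbard--Stratonovich Gaussian identity with $J>0$, factorise the spin sum into $2\cosh$ factors (with $2\sinh=2\tanh\cdot\cosh$ for the two distinguished spins $i\neq j$), absorb $N_{l}=\alpha_{l}N$ into $e^{-NF(u)}$, and cancel the common prefactor in the ratio. The only difference is cosmetic -- you rescale $y=\sqrt{N}u$ in the identity written with $J^{-1}$ in the exponent, while the paper reaches the same integral via the substitution $u=N^{-1}Jw$.
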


\begin{proof}
The Gaussian integral $1=\frac{1}{\sqrt{2\pi}}\int e^{-(s-a)^{2}/2}\,\textup{d}s$
implies, expanding the square, that 
\[
e^{a^{2}/2}=\frac{1}{\sqrt{2\pi}}\int e^{-s^{2}/2+sa}\,\textup{d}s.
\]
Similarly, we obtain for any positive definite matrix $A\in\IR^{M\times M}$
\begin{align}
e^{y^{T}Ay/2N} & =\frac{\sqrt{\det(A)}}{\left(2\pi N\right)^{M/2}}\int e^{-w^{T}Aw/(2N)+w^{T}Ay/N}\,\textup{d}w=\frac{N^{M/2}}{\left(2\pi\right)^{M/2}\sqrt{\det(A)}}\int e^{-(N/2)u^{T}A^{-1}u+u^{T}s({\bf x})}\,\textup{d}u,\label{Gauss}
\end{align}
the latter equality being the result of a change of variables $u=N^{-1}Aw$.
The first equality is straightforward for diagonal matrices $A$ (using
the one-dimensional version). Since $A$ is self-adjoint, we can diagonalize
it: $A=U^{T}DU$, for an orthogonal matrix $U$ and a diagonal matrix
$D$. A change of variables $z=Uw,$ gives the desired result (using
the diagonal case with $Uy$ instead of $y$).

For every 
\[
{\bf x}=\left(x_{1},\ldots,x_{N}\right)\in\{-1,1\}^{N},
\]
we set 
\[
s_{l}({\bf x}):=\frac{1}{\sqrt{\alpha_{l}}}\sum_{i\in\iota^{-1}\left(l\right)}x_{i},\quad\tilde{s}_{l}({\bf x}):=\frac{1}{\sqrt{N}}s_{l}({\bf x}),\quad s({\bf x}):=\left(s_{1}({\bf x}),\ldots,s_{M}({\bf x})\right)^{T},\quad\text{and }\tilde{s}({\bf x}):=\left(\tilde{s}_{1}({\bf x}),\ldots,\tilde{s}_{M}({\bf x})\right)^{T}.
\]
Recall Definition \ref{def:CWM} and note that
\[
e^{-\mathbb{H}({\bf x})}=e^{\tilde{s}({\bf x})^{T}J\tilde{s}({\bf x})/2}=e^{s({\bf x})^{T}Js({\bf x})/\left(2N\right)}.
\]
Using \eqref{Gauss} with $A=J$ and $y=s({\bf x})$ and the change
of variables $u=N^{-1}Jw$, we obtain 
\begin{align*}
e^{-\mathbb{H}({\bf x})} & =\frac{N^{M/2}}{\left(2\pi\right)^{M/2}\sqrt{\det(J)}}\int e^{-(N/2)u^{T}J^{-1}u+u^{T}s({\bf x})}\,\textup{d}u.
\end{align*}
Noting that the partition function $Z$ is given by $Z=\sum_{{\bf x}}e^{-\mathbb{H}({\bf x})}$,
we get 
\begin{align}
\mathbb{E}\left(X_{i}X_{j}\right) & =\frac{\int e^{-(N/2)u^{T}J^{-1}u}\sum_{{\bf x}}x_{i}x_{j}e^{u^{T}s({\bf x})}\,\textup{d}u}{\int e^{-(N/2)u^{T}J^{-1}u}\sum_{{\bf x}}e^{u^{T}s({\bf x})}\,\textup{d}u}.\label{E2}
\end{align}
We finish by proving that the numerator is $Z_{2}(N)$ and the denominator
$Z_{0}(N)$. The equation
\[
x_{i}x_{j}e^{u^{T}s({\bf x})}=x_{i}e^{u_{\iota(i)}x_{i}/\sqrt{\alpha_{\iota(i)}}}x_{j}e^{u_{\iota(j)}x_{j}/\sqrt{\alpha_{\iota(j)}}}\prod_{k\notin\{i,j\}}e^{u_{\iota(k)}x_{k}/\sqrt{\alpha_{\iota(k)}}}
\]
and the associativity and commutativity of addition yield
\begin{align*}
\sum_{{\bf x}}x_{i}x_{j}e^{u^{T}s({\bf x})} & =\sum_{x_{1}\in\{-1,1\}}\cdots\sum_{x_{N}\in\{-1,1\}}x_{i}x_{j}e^{u^{T}s({\bf x})}\\
 & =\sum_{x_{i}\in\{-1,1\}}x_{i}e^{u_{\iota(i)}x_{i}/\sqrt{\alpha_{\iota(i)}}}\sum_{x_{j}\in\{-1,1\}}x_{j}e^{u_{\iota(j)}x_{j}/\sqrt{\alpha_{\iota(j)}}}\prod_{k\notin\{i,j\}}\sum_{k\in\{-1,1\}}e^{u_{\iota(k)}x_{k}/\sqrt{\alpha_{\iota(k)}}}.
\end{align*}
Then we note that
\begin{align*}
\sum_{x_{i}\in\{-1,1\}}x_{i}e^{u_{\iota(i)}x_{i}/\sqrt{\alpha_{\iota(i)}}} & =e^{u_{\iota(i)}/\sqrt{\alpha_{\iota(i)}}}-e^{-u_{\iota(i)}/\sqrt{\alpha_{\iota(i)}}}\\
 & =2\sinh\left(\frac{u_{\iota(i)}}{\sqrt{\alpha_{\iota(i)}}}\right)=2\tanh\left(\frac{u_{\iota(i)}}{\sqrt{\alpha_{\iota(i)}}}\right)\cosh\left(\frac{u_{\iota(i)}}{\sqrt{\alpha_{\iota(i)}}}\right),
\end{align*}
since $\tanh t=\sinh t/\cosh t$ for any $t\in\IR$. Similarly, 
\[
\sum_{x_{j}\in\{-1,1\}}x_{j}e^{u_{\iota(j)}x_{j}}=2\tanh\left(\frac{u_{\iota(j)}}{\sqrt{\alpha_{\iota(j)}}}\right)\cosh\left(\frac{u_{\iota(j)}}{\sqrt{\alpha_{\iota(j)}}}\right),
\]
and for all $k\notin\left\{ i,j\right\} $ 
\[
\sum_{k\in\{-1,1\}}e^{u_{\iota(k)}x_{k}/\sqrt{\alpha_{\iota(k)}}}=e^{u_{\iota(k)}/\sqrt{\alpha_{\iota(k)}}}+e^{-u_{\iota(k)}/\sqrt{\alpha_{\iota(k)}}}=2\cosh\left(\frac{u_{\iota(k)}}{\sqrt{\alpha_{\iota(k)}}}\right).
\]
We have
\begin{align*}
 & \quad\int e^{-(N/2)u^{T}J^{-1}u}\sum_{{\bf x}}x_{i}x_{j}e^{u^{T}s({\bf x})}\,\textup{d}u\\
 & =2^{N}\int e^{-(N/2)u^{T}J^{-1}u}\tanh\left(\frac{u_{\iota(i)}}{\sqrt{\alpha_{\iota(i)}}}\right)\tanh\left(\frac{u_{\iota(j)}}{\sqrt{\alpha_{\iota(j)}}}\right)\prod_{l=1}^{M}\cosh^{N_{l}}\left(\frac{u_{l}}{\sqrt{\alpha_{l}}}\right)\,\textup{d}u\\
 & =2^{N}\int e^{-N\left(\frac{1}{2}u^{T}J^{-1}u-\sum_{l=1}^{M}\alpha_{l}\ln\cosh\left(\frac{u_{l}}{\sqrt{\alpha_{l}}}\right)\right)}\tanh\left(\frac{u_{\iota(i)}}{\sqrt{\alpha_{\iota(i)}}}\right)\tanh\left(\frac{u_{\iota(j)}}{\sqrt{\alpha_{\iota(j)}}}\right)\,\textup{d}u,
\end{align*}
where we again used that $N_{l}=\alpha_{l}N$ for all $l\in\IN_{M}$.
Similarly, we obtain an expression for the denominator of \eqref{E2}:
\[
\int e^{-(N/2)u^{T}J^{-1}u}\sum_{{\bf x}}e^{u^{T}s({\bf x})}\,\textup{d}u=2^{N}\int e^{-N\left(\frac{1}{2}u^{T}J^{-1}u-\sum_{l=1}^{M}\alpha_{l}\ln\cosh\left(\frac{u_{l}}{\sqrt{\alpha_{l}}}\right)\right)}\,\textup{d}u.
\]
The result follows by dividing numerator and denominator by $2^{N}$.
\end{proof}
The following result pertaining to Gaussian integrals is used repeatedly
throughout the proofs. It is an immediate consequence of the definition
of a multivariate normal distribution. See, e.g., \cite[pp. 176-177]{Dur2019}
for a reference.
\begin{prop}
\label{MGauss}Fix $d\in\IN$. We have for all invertible $A\in\IR^{d\times d}$

\begin{align*}
\frac{1}{(2\pi)^{d/2}}\sqrt{\det(A)}\int e^{-\frac{1}{2}z^{T}Az}z^{T}z\,\textup{d}z=A^{-1}, & \hspace{1cm}\frac{1}{(2\pi)^{d/2}}\sqrt{\det(A)}\int e^{-\frac{1}{2}z^{T}Az}\,\textup{d}z=1.
\end{align*}
\end{prop}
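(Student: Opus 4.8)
The plan is to reduce both identities to one-dimensional Gaussian integrals by diagonalising $A$. Although the statement is phrased for invertible $A$, convergence of $\int e^{-\frac{1}{2}z^{T}Az}\,\textup{d}z$ forces $A$ to be symmetric and positive definite, which is exactly the situation in which the proposition is applied (with $A=H_{k}$). Accordingly, I would begin by invoking the spectral theorem to write $A=U^{T}DU$ with $U$ orthogonal and $D=\textup{diag}(\lambda_{1},\ldots,\lambda_{d})$, $\lambda_{i}>0$.

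First I would perform the orthogonal change of variables $w=Uz$. Since $U$ is orthogonal its Jacobian has absolute value $1$, and the quadratic form becomes $z^{T}Az=w^{T}Dw=\sum_{i=1}^{d}\lambda_{i}w_{i}^{2}$, so the Gaussian weight factorises over coordinates. For the normalisation (the second identity) this reduces the integral to a product of the elementary one-dimensional integrals $\int_{\IR}e^{-\frac{1}{2}\lambda_{i}w_{i}^{2}}\,\textup{d}w_{i}=\sqrt{2\pi/\lambda_{i}}$, whence $\int e^{-\frac{1}{2}z^{T}Az}\,\textup{d}z=(2\pi)^{d/2}/\sqrt{\lambda_{1}\cdots\lambda_{d}}=(2\pi)^{d/2}/\sqrt{\det A}$, which is the claimed identity after multiplying by $\sqrt{\det A}/(2\pi)^{d/2}$.

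For the second-moment identity (reading the integrand as the outer product $zz^{T}$, so that the right-hand side $A^{-1}$ is again a matrix) I would work in the $w$ coordinates and compute the matrix $\int e^{-\frac{1}{2}w^{T}Dw}ww^{T}\,\textup{d}w$ entrywise. Off-diagonal entries carry an odd factor $w_{i}w_{j}$ with $i\neq j$ and vanish by symmetry, while each diagonal entry reduces to the one-dimensional second moment $\int_{\IR}w_{i}^{2}e^{-\frac{1}{2}\lambda_{i}w_{i}^{2}}\,\textup{d}w_{i}\big/\int_{\IR}e^{-\frac{1}{2}\lambda_{i}w_{i}^{2}}\,\textup{d}w_{i}=1/\lambda_{i}$, obtained by differentiating the normalisation in $\lambda_{i}$ or by integration by parts. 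Hence $\frac{\sqrt{\det A}}{(2\pi)^{d/2}}\int e^{-\frac{1}{2}w^{T}Dw}ww^{T}\,\textup{d}w=D^{-1}$. Finally I would undo the substitution using $zz^{T}=U^{T}ww^{T}U$, which turns the left-hand side into $U^{T}D^{-1}U=(U^{T}DU)^{-1}=A^{-1}$, as required.

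There is no real obstacle here: the result is the textbook covariance computation for a centred multivariate normal, and the only points needing care are the bookkeeping of the orthogonal change of variables (checking that the Jacobian is $1$ and that $ww^{T}$ conjugates back correctly) together with the implicit positive-definiteness assumption required for convergence. As an alternative, one could simply quote the moment description of the multivariate normal distribution directly, as the cited reference does.
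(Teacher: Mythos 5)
Your proof is correct, but note that the paper does not actually prove Proposition \ref{MGauss} at all: it dismisses it as ``an immediate consequence of the definition of a multivariate normal distribution'' and cites Durrett, pp.~176--177. So your explicit diagonalisation argument is a genuinely different (self-contained) route, and it is sound throughout: the orthogonal substitution $w=Uz$ has unit Jacobian, the weight factorises, the normalisation gives $(2\pi)^{d/2}/\sqrt{\det A}$, the off-diagonal moments vanish by oddness, the diagonal ones give $1/\lambda_{i}$, and the conjugation $U^{T}D^{-1}U=(U^{T}DU)^{-1}=A^{-1}$ closes the argument. Interestingly, your technique mirrors what the paper itself does elsewhere: in the Appendix proof of Theorem \ref{thm:de_Finetti_rep}, the related identity \eqref{Gauss} is established by exactly this device (``Since $A$ is self-adjoint, we can diagonalize it: $A=U^{T}DU$\ldots''), so your approach is in the spirit of the paper even where the paper chose to cite rather than compute. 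You also rightly repair two defects in the statement that the paper leaves unaddressed: ``invertible'' must be read as ``symmetric positive definite'' (for a non-symmetric $A$ the quadratic form only sees the symmetric part, so the stated $\sqrt{\det A}$ normalisation would be false, and for indefinite $A$ the integrals diverge), and the integrand $z^{T}z$ must be read as the outer product $zz^{T}$ for the right-hand side to be the matrix $A^{-1}$ --- both readings are confirmed by how the proposition is invoked in \eqref{gausint} and \eqref{gausintp}, where $A$ is the positive definite $NH$ or $NH_{k}$ and individual entries of $A^{-1}$ are extracted. What your route buys is a verifiable, reference-free proof suitable for the paper's stated goal of self-containedness; what the paper's citation buys is brevity for a standard fact.
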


\bibliographystyle{plain}
\bibliography{../References}

\end{document}